\newtheorem{theorem}{Theorem}[section]
\newtheorem{lemma}[theorem]{Lemma}
\newtheorem{proposition}[theorem]{Proposition}
\theoremstyle{definition}
\newtheorem{definition}[theorem]{Definition}
\theoremstyle{remark}
\numberwithin{equation}{section}
\newcommand{\blankbox}[2]
\begin{document}
\setlength{\baselineskip}{1.2\baselineskip}
\title  [Neumann problem  for  elliptic equations]
{ The Neumann problem for a class of fully nonlinear elliptic partial differential equations}
\author{Bin Deng}
\address{Department of Mathematics\\
         University of Science and Technology of China\\
         Hefei, 230026, Anhui Province, China.}
\email{bingomat@mail.ustc.edu.cn}

\thanks{$*$ Research supported by NSFC No.11721101 and No.11871255. I would like to thank professor Xi-Nan Ma,
my advisor, for his constant encouragement and guidance.}
\begin{abstract}
  In this paper, we establish a global $C^2$ estimates to the Neumann problem for a class of fullly nonlinear elliptic equations. By the method of continuity, we establish the existence theorem of $k$-admissible
solutions of the Neumann problems.
\end{abstract}
\keywords{Neumann problem, fully nonlinear, elliptic equation}

\maketitle

%

%

\section{Introduction}
In this paper, we consider the $k$-admissible solutions of the Neumann problem of the fully nonlinear equations
\begin{equation}\label{eq}
S_{k}(W)=f(x),\quad\text{in}\quad \Omega,
\end{equation}
where the matrix $W=(w_{\alpha_{1}\cdot\cdot\cdot\alpha_{m},\beta_{1}\cdot\cdot\cdot\beta_{m}})_{C^{m}_{n}\times C^{m}_{n}}$ ,
for $2\leq m\leq n-1$ and $C_{n}^{m}=\frac{n!}{m!(n-m)!}$, with the elements as follows,
\begin{eqnarray}\label{w0}
w_{\alpha_{1}\cdot\cdot\cdot\alpha_{m},\beta_{1}\cdot\cdot\cdot\beta_{m}}=\sum^{m}_{i=1}\sum^{n}_{j=1}u_{\alpha_{i} j
}\delta^{\alpha_{1}\cdot\cdot\cdot\alpha_{i-1}j\alpha_{i+1}\cdot\cdot\cdot\alpha_{m}}_{\beta_{1}\cdot\cdot\cdot
\beta_{i-1}\beta_{i}\beta_{i+1}\cdot\cdot\cdot\beta_{m}},
\end{eqnarray}
a linear combination of $u_{ij}$, where $u_{ij}=\frac{\partial^2u}{\partial x_i\partial x_j}$ and $\delta^{\alpha_{1}\cdot\cdot\cdot\alpha_{i-1}\gamma\alpha_{i+1}\cdot\cdot\cdot\alpha_{m}}_{\beta_{1}\cdot\cdot\cdot
\beta_{i-1}\beta_{i}\beta_{i+1}\cdot\cdot\cdot\beta_{m}}$ is the generalized Kronecker symbol. All indexes $i,j,\alpha_i,\beta_i,\cdots$ come from $1$ to $n$. $f\in C^{\infty}(\Omega)$ is a positive function. And for any
$k=1, 2, \cdots, C_{n}^{m}$,
\begin{eqnarray}
  S_{k}(W)=S_{k}\big(\lambda(W)\big)=\sum_{1\leq i_{1}<i_{2}<\cdots<i_{k}\leq C_{n}^{m}}\lambda_{i_{1}}\lambda_{i_{2}}\cdots\lambda_{i_{k}},\nonumber
\end{eqnarray}
where $\lambda(W)=(\lambda_{1}, \lambda_{2}, \cdots, \lambda_{C_{n}^{m}})$ is the eigenvalues of $W$. We also set $S_{0}(W)=1$.

In fact, the matrix $W$ comes from the following operator $U^{[m]}$ as
in \cite{cns2} and \cite{hmw1}. First, we note
that $(u_{ij})_{n\times n}$ induces an operator $U$ on $\mathbb{R}^n$ by
\begin{eqnarray}
  U(e_{i})=\sum_{j=1}^{n}u_{i j}e_{j},\quad \forall 1\leq i\leq n,\nonumber
\end{eqnarray}
where $\{e_1,e_2,\cdots,e_n\}$ is the standard basis of $\mathbb{R}^n$.  We further extend $U$ to acting on the real
 vector space $\wedge^m\mathbb{R}^n$ by
\begin{eqnarray}
  U^{[m]}(e_{\alpha_1}\wedge\cdots\wedge e_{\alpha_m})=
  \sum_{i=1}^{m}e_{\alpha_1}\wedge\cdots\wedge U(e_{\alpha_i})\wedge\cdots\wedge e_{\alpha_m},\nonumber
\end{eqnarray}
where $\{e_{\alpha_1}\wedge\cdots\wedge e_{\alpha_m}\ | \ 1\leq \alpha_1<\cdots<\alpha_m\leq n\}$ is the standard basis
for $\wedge^m\mathbb{R}^n$. Then $W$ is the matrix of $U^{[m]}$ under this standard basis.
It is convenient to denote the multi-index by $\overline{\alpha}=(\alpha_1\cdots\alpha_m)$.
 We only consider the admissible multi-index,
 that is, $1\leq\alpha_1<\alpha_2,\cdots<\alpha_m\leq n$.
 By the dictionary arrangement, we can arrange all admissible multi-indexes
 from $1$ to $C_n^m$, and use $N_{\overline{\alpha}}$ denote the order number of
 the multi-index $\overline{\alpha}=(\alpha_1\cdots\alpha_n)$, i.e.,
 $N_{\overline{\alpha}}=1$ for $\overline{\alpha}=(12\cdots m)$, $\cdots$.
 We also use $\overline{\alpha}$ denote the index set $\{\alpha_1,\cdots,\alpha_n\}$.
It is not hard to see that
\begin{eqnarray}
 W_{N_{\overline{\alpha}}N_{\overline{\alpha}}}= w_{\overline{\alpha}\overline{\alpha}}
 =\sum_{i=1}^{m}u_{\alpha_i\alpha_i},\label{w1}
\end{eqnarray}
and
\begin{eqnarray}
 W_{N_{\overline{\alpha}}N_{\overline{\beta}}}= w_{\overline{\alpha}\overline{\beta}}
 =(-1)^{|i-j|}u_{\alpha_i\beta_j},\label{w2}
\end{eqnarray}
if the index set $\{\alpha_{1},\cdot\cdot\cdot,\alpha_{m}\}\setminus\{\alpha_i\}$ equals to the index set $\{\beta_{1},\cdot\cdot\cdot,\beta_{m}\}\setminus\{\beta_j\}$ but $\alpha_i\neq \beta_j$
; and also
\begin{eqnarray}
 W_{N_{\overline{\alpha}}N_{\overline{\beta}}}= w_{\overline{\alpha}\overline{\beta}}
 =0,\label{w3}
\end{eqnarray}
if the index sets $\{\alpha_{1},\cdot\cdot\cdot,\alpha_{m}\}$ and $\{\beta_{1},\cdot\cdot\cdot,\beta_{m}\}$
are differed by more than one elements.
It follows that $W$ is symmetric and is diagonal if $(u_{ij})_{n\times n}$ is diagonal.
The eigenvalues of  $W$ are the sums of eigenvalues of $(u_{ij})_{n\times n}$.

Define the Garding's cone in $\mathbb{R}^n$ as
\begin{eqnarray}
\Gamma_{k} = \{\mu\in\mathbb{R}^{n}|\  S_{i}(\mu) > 0, \forall 1 \leq i \leq k\}.\nonumber
\end{eqnarray}
Then we define the generalized Garding's cone as, $1\leq m\leq n$, $1\leq k\leq C_n^m$,
\begin{eqnarray}
  \Gamma_k^{(m)}=\{\mu\in\mathbb{R}^n|\ \{\mu_{i_1}+\cdots+\mu_{i_m}|\ 1\leq i_1<\cdots<i_m\leq n\}\in\Gamma_k \ \text{in}\  \mathbb{R}^{C_n^m}\}.\nonumber
\end{eqnarray}
Obviously, $\Gamma_k=\Gamma_k^{(1)}$ and $\Gamma_n\subset\Gamma_k^{(m)}\subset\Gamma_1$.
 If the eigenvalues of $D^2u$, denoted by $\mu(D^2u)$, is contained in $\Gamma_k^{(m)}$ for any $x\in\Omega$,
 then equivalently $\lambda(W)\in\Gamma_{k}$, such that the equation (\ref{eq}) is elliptic (see \cite{cns2} or \cite{l2}).
 It is naturally to define $k$-admissible solution as follows.
\begin{definition}
  We say $u$ is  $k$-admissible if $\mu(D^2u)\in \Gamma_k^{(m)}$.
  In addition, if $u$ is a solution of (\ref{eq}), we say $u$ is a $k$-admissible
  solution.
\end{definition}

If $m = 1$, (\ref{eq}) is known as the k-Hessian equation. In particular, (\ref{eq}) is the
Poisson
equation if $k = 1$, and the Monge-Amp\`ere equation if $k = n$, $m = 1$.

For the Dirichlet problem in $\mathbb{R}^{n}$, many results are known. For example, the Dirichlet
problem of Laplace equation is studied in \cite{gt}, Caffarelli-Nirenberg-Spruck \cite{cns} and Ivochkina
\cite{ivo} solved the Dirichlet problem of Monge-Amp\`ere equation, and Caffarelli-Nirenberg-Spruck \cite{cns2} solved the Dirichlet problem of general Hessian equations even including the case considered here. For the general Hessian
quotient equation, the Dirichlet problem is solved by Trudinger in \cite{tru}. Finally, Guan \cite{guanbo} treated the Dirichlet
problem for general fully nonlinear elliptic equation on the Riemannian manifolds without any geometric restrictions to the boundary.

Also, the Neumann or oblique derivative problem of partial differential equations was
widely studied. For a priori estimates and the existence theorem of Laplace equation
with Neumann boundary condition, we refer to the book \cite{gt}. Also, we can see the
book written by Lieberman \cite{l} for the Neumann or oblique derivative problem of linear
and quasilinear elliptic equations. In 1987, Lions-Trudinger-Urbas solved the Neumann
problem of Monge-Amp\`ere equation in the celebrated paper \cite{ltu}. For the the Neumann
problem of k-Hessian equations, Trudinger \cite{tru2} established the existence theorem when
the domain is a ball, and he conjectured (in \cite{tru2}, page 305) that one can solve the problem
in sufficiently smooth uniformly convex domains. Recently, Ma and Qiu \cite{mq} gave a positive
answer to this problem and solved the the Neumann problem of k-Hessian equations in
uniformly convex domains. After their work, the research on the Neumann problem of other equatios
has made many progresses(see \cite{mx} \cite{cz} \cite{cmw} \cite{w}).

For general $m$, the $W$-matrix is quite related to the ``$m$-convexity" or ``$m$-positivity" in
differential geometry and partial differential equations. We say a $C^2$ function $u$ is $m$-convex if
the sum of any $m$ eigenvalues of its Hessian is nonnegative, equivalently, $\mu(D^2u)\in\overline{\Gamma^{(m)}_{C_n^m}}$
or $\lambda(W)\in\overline{\Gamma_{C_n^m}}$.
Similarly,
we can formulate the notion of $m$-convexity for curvature operator and second fundamental
forms of hypersurfaces. There are large amount literature in differential geometry on this
subject. For example, Sha \cite{sh} and Wu \cite{wu} introduced the $m$-convexity of the sectional
curvature of Riemannian manifolds and studied the topology for these manifolds. In a series interesting papers, Harvey and Lawson
\cite{hl1} \cite{hl2} \cite{hl3}  introduce some generally convexity on the solutions of the nonlinear elliptic Dirichlet
problem, $m$-convexity is a special case. Han-Ma-Wu \cite{hmw1} obtained  an existence theorem of $m$-convex starshaped hypersurface
with prescribed mean curvature.
 More recently, in the complex space $\mathbb{C}^n$ case, Tosatti and Weinkove\cite{tw} \cite{tw2} solved the Monge-Amp\`ere equation for
 $(n-1)$-plurisubharmonic functions on a compact K\"ahler manifold, where the $(n-1)$-plurisubharmonicity means
the sum of any $n-1$ eigenvalues of the complex Hessian  is nonnegative.

From the above geometry and analysis reasons, it is naturally to study the Neumann problem for general equation (\ref{eq}).

The methods of Ma and Qiu \cite{mq} for the problem with $m=1$ can be generalized to our case. The key ingredient
in the present paper is to understand the structure of $W$, precisely, to replace the eigenvalues of $D^2u$ by the sums of them.
For $k\leq C_{n-1}^{m-1}=\frac{m}{n}C_n^m$,
we obtain an existence theorem of the $k$-admissible solution with less geometric
restrictions to the boundary. For $m<\frac{n}{2}$ and $ k=C_{n-1}^{m-1}+k_0\leq \frac{n-m}{n}C_n^m$, we can obtain an existence theorem
if $\Omega$ is strictly $(m,k_0)$-convex (see Definition  \ref{def}). It seems that as the degree of nonlinearity of the equation (\ref{eq})
increases, i.e., $k$ becomes larger,
the problem becomes more difficult to solve. Particularly, for $m=n-1$,
we get the existence of the $k$-admissible solution for $k\leq n-1$
only except that of the strictly $(n-1)$-convex solution for $k=n$.
The author will continue to study this case in \cite{deng}.

 A $C^2$ domain $\Omega\subset\mathbb{R}^n$ is convex, that is,  $\kappa_i(x) \geq 0$
   for any $x\in \partial\Omega$ and $i = 1,\cdots ,n- 1$, or equivalently,
   $\kappa(x)\in \overline{\Gamma_{ n-1}}$
   for any $x \in \partial\Omega$, where
$\kappa(x) = (\kappa_1 ,\cdots ,\kappa_{n-1} )$ denote the principal curvatures
of $\partial\Omega$ with respect to its inner normal $-\nu$. Then, we say $\Omega$ is a
strictly $k$-convex domain if $\kappa(x)\in \Gamma_{k}$.
To state the results in precise way, we need a definition of $(m,k_0)$-convexity as follows.
\begin{definition}\label{def}
 We say $\Omega$ is a
strictly $(m,k_0)$-convex if
$\kappa(x) = (\kappa_1 ,\cdots ,\kappa_{n-1} )\in \Gamma_{k_0}^{(m)}$
for any $x\in\partial\Omega$. Obviously, $\Gamma_{n-1}\subset\Gamma_{k_0}^{(m)}$ in
 $\mathbb{R}^{n-1}$, if $k_0\leq n$.
\end{definition}

We now state the main results of this paper as follows.
The case $k\leq C_{n-1}^{m-1}$ is easy to treat so we consider that first.
\begin{theorem}\label{th1.1}
  Suppose $\Omega\subset\mathbb{R}^{n}\ (n\geq3)$ is a bounded domain  with $C^{4}$ boundary,
   $2\leq m\leq n-1$ and
 $2\leq k\leq C_{n-1}^{m-1}$. Denote $\nu(x)$  the outer unit normal vector, and
 $\kappa_{min}(x)$ the minimum principal curvature at $x\in\partial\Omega$.
  Let $f \in C^{2}(\Omega)$ is a positive function, and $a, b\in C^{3}(\partial\Omega)$
   with $a>0$, $a+2\kappa_{min}>0$. Then there exists a unique $k$-admissible solution
   $u \in C^{3,\alpha}(\overline{\Omega})$ of the Neumann problem
\begin{equation}\label{eq1}
  \left\{
  \begin{aligned}
  S_{k}(W)&=f(x),\quad\text{in}\ \Omega,\\
  u_{\nu}&=-a(x)u+b(x),\quad\text{on}\ \partial\Omega.
  \end{aligned}
  \right.
\end{equation}
\end{theorem}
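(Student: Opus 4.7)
The plan is to apply the method of continuity. I would deform \eqref{eq1} through a one-parameter family, for instance $S_k(W[u^t])^{1/k} = t\,f^{1/k} + (1-t)c_0$ with compatible Neumann data, chosen so that the $t=0$ problem admits an explicit $k$-admissible solution (e.g.\ a quadratic). Let $I\subset[0,1]$ be the set of $t$ for which the deformed problem has a $k$-admissible solution in $C^{3,\alpha}(\overline\Omega)$. Openness of $I$ follows from the implicit function theorem applied to the linearization, which is a uniformly elliptic linear operator with a regular oblique boundary condition $\partial_\nu v + a v = 0$; the hypothesis $a>0$ makes this condition regular oblique and the linear theory invertible in the appropriate H\"older spaces. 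Closedness reduces to an a priori $C^{2,\alpha}(\overline\Omega)$ estimate. Once $\|u\|_{C^2}$ is controlled, the equation becomes uniformly elliptic on the restricted class of admissible functions, so concavity of $S_k^{1/k}$ on $\Gamma_k$ together with Evans--Krylov theory up to the boundary for oblique problems (in the Lieberman--Trudinger form) yields $C^{2,\alpha}$, and Schauder theory applied to the differentiated equation upgrades this to $C^{3,\alpha}$. Uniqueness is a standard consequence of the concavity of $S_k^{1/k}$ and $a>0$ via the maximum principle applied to the difference of two admissible solutions.

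The $C^0$ estimate is immediate: each diagonal entry of $W$ is $\sum_{i\in\overline\alpha} u_{ii}$, so $\mathrm{tr}(W) = C_{n-1}^{m-1}\,\Delta u$, and admissibility $\lambda(W)\in\Gamma_k\subset\Gamma_1$ forces $u$ to be subharmonic; combined with $a>0$ in $u_\nu + au = b$, the maximum principle gives a two-sided sup bound. For the $C^1$ bound I would follow the Ma--Qiu strategy and test an auxiliary function such as $G = |Du|^2 e^{\phi(u)}$, using ellipticity to exclude interior maxima and tangential differentiation of the Neumann condition, together with the $C^3$ regularity of $a$ and $b$, to handle boundary ones. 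Once the boundary $C^2$ estimate is known, the interior $C^2$ bound follows by a standard maximum-principle argument applied to the largest eigenvalue of $W$, so the problem reduces to bounding $|D^2 u|$ on $\partial\Omega$.

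Fix $x_0\in\partial\Omega$ and choose coordinates with $e_n$ the inner normal. Tangential-tangential derivatives $u_{\alpha\beta}$ for $\alpha,\beta<n$ are recovered directly by differentiating $u_\nu = -au+b$ twice along $\partial\Omega$. The tangential-normal derivatives $u_{\alpha n}$ are estimated through a barrier argument adapted from \cite{mq}; the hypothesis $a+2\kappa_{\min}>0$ enters precisely here, because tangential differentiation of the Neumann condition on the curved boundary couples $u_{\alpha n}$ with the second fundamental form, and this sign condition is exactly what makes the barrier close. The main obstacle is the double normal estimate $u_{nn}(x_0)\leq C$. The key structural observation, specific to the regime $k\leq C_{n-1}^{m-1}$, is that among the $C_n^m$ diagonal entries $W_{\overline\alpha\overline\alpha}$ exactly $C_{n-1}^{m-1}$ of them involve $u_{nn}$ (those with $n\in\overline\alpha$), and this count is at least $k$. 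Expanding $S_k(W)=f$ around the sub-block indexed by these multi-indices, and using the off-diagonal structure recorded in \eqref{w1}--\eqref{w3} to separate terms that genuinely contain $u_{nn}$ from purely tangential ones, one can bound $u_{nn}$ in terms of the already-controlled tangential and mixed data together with $f$ and its derivatives. This combinatorial-algebraic reduction is the step that makes the threshold $k\leq C_{n-1}^{m-1}$ natural and is the analogue, in the $W$-matrix setting, of the corresponding argument for $k$-Hessian equations in \cite{mq}.
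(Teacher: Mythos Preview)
Your overall continuity framework and the $C^0$, $C^1$, and $C^{2,\alpha}$-via-Evans--Krylov steps are fine and match the paper. The genuine problems are in the boundary $C^2$ analysis, and they are not cosmetic.

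First, tangential--tangential derivatives on $\partial\Omega$ \emph{cannot} be read off by differentiating the Neumann condition. Differentiating $u_\nu=-au+b$ once tangentially gives, up to curvature corrections, the mixed $u_{\alpha n}$; a second tangential derivative produces third-order quantities $u_{n\alpha\beta}$, not $u_{\alpha\beta}$. In the paper the pure tangential second derivatives are controlled only \emph{in terms of} $N=\sup_{\partial\Omega}|u_{\nu\nu}|$, via the auxiliary function $v(x,\xi)=u_{\xi\xi}-v'(x,\xi)+K_1|x|^2+K_2|Du|^2$ and a Hopf-lemma argument at a boundary maximum (this is exactly where $a+2\kappa_{\min}>0$, i.e.\ $\phi_z-2\kappa_{\min}<0$, is used: it gives the right sign in the inequality $u_{\xi_0\xi_0\nu}\leq(\phi_z-2\kappa_{\min})u_{\xi_0\xi_0}+C(1+N)$ when $\xi_0$ is tangential). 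So you have the role of the curvature hypothesis misplaced, and more importantly you do \emph{not} have an independent bound on $u_{\alpha\beta}$---only $|D^2u|\leq C_0(1+N)$.

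Second, because tangential and mixed data are only $O(1+N)$, your proposed algebraic extraction of $u_{nn}$ from $S_k(W)=f$ at a boundary point cannot close: every purely tangential eigenvalue of $W$ is itself of size $O(N)$, so expanding $S_k$ about the sub-block of multi-indices containing $n$ yields an inequality of the form $u_{nn}\leq C(1+N)$, which is useless. The paper's mechanism is different. One works in $\Omega_\mu$ with the barrier $P=(Du\cdot\nu-\phi)$ and the comparison function $h=-d+K_3d^2$. The combinatorial fact you noticed---that exactly $C_{n-1}^{m-1}$ of the $C_n^m$ diagonal entries of a $W$-matrix involve the $n$th direction---is applied to the $W$-matrix of $h$, \emph{not} of $u$: since $D^2h$ has one large eigenvalue $2K_3$ in the normal direction, the $W$-matrix of $h$ has $C_{n-1}^{m-1}\geq k$ eigenvalues of size $\sim K_3$, so $h$ is $k$-admissible and concavity yields $F^{ij}h_{ij}\geq K_3^{1/2}(1+\mathcal F)$. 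This, together with $|F^{ij}P_{ij}|\leq C(1+N)(1+\mathcal F)$, gives $|P|\leq(A+\tfrac12N)h$ in $\Omega_\mu$ by the maximum principle, and taking $\partial_\nu$ at the boundary produces $|u_{\nu\nu}|\leq C+\tfrac12N$, hence $N\leq C$. The threshold $k\leq C_{n-1}^{m-1}$ thus enters through admissibility of the barrier $h$, not through an expansion of the equation for $u$ on $\partial\Omega$.
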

For $ k=C_{n-1}^{m-1}+k_0\leq \frac{n-m}{n}C_n^m$, we can settle more cases
 if $\Omega$ is strictly $(m,k_0)$-convex
as in the following theorem.
\begin{theorem}\label{th0}
  Suppose $\Omega\subset\mathbb{R}^{n}\ (n\geq3)$ is a strictly $(m,k_0)$-convex bounded
   domain  with $C^{4}$ boundary, $2\leq m\leq \frac{n}{2}$ and
 $ k=C_{n-1}^{m-1}+k_0\leq \frac{n-m}{n}C_n^m$. Denote $\nu(x)$  the outer unit normal
 vector, and $\kappa_{min}(x)$ the minimum principal curvature at $x\in\partial\Omega$.
 Let $f \in C^{2}(\Omega)$ is a positive function, and $a, b\in C^{3}(\partial\Omega)$
 with $a>0$, $a+2\kappa_{min}>0$. Then there exists a unique $k$-admissible solution
 $u \in C^{3,\alpha}(\overline{\Omega})$ of the Neumann problem
\begin{equation}\label{eq2}
  \left\{
  \begin{aligned}
  S_{k}(W)&=f(x),\quad\text{in}\ \Omega,\\
  u_{\nu}&=-a(x)u+b(x),\quad\text{on}\ \partial\Omega.
  \end{aligned}
  \right.
\end{equation}
\end{theorem}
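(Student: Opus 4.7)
The plan is to follow the method of continuity, which reduces the existence statement to establishing a priori $C^{2,\alpha}(\overline{\Omega})$ bounds. I would connect \eqref{eq2} to a trivially solvable Neumann problem (e.g.\ with right-hand side chosen so that $u\equiv 0$ is admissible) by a one-parameter family $S_k(W[u])=f_t$ with the same oblique condition $u_\nu=-au+b$, verify openness via the linearized operator (elliptic with oblique data, hence Fredholm of index zero on $C^{2,\alpha}$), and use the $(m,k_0)$-convexity to ensure the admissibility cone is preserved along the path. Once $\|u\|_{C^2}$ is bounded, concavity of $S_k^{1/k}$ on $\Gamma_k$ together with Lieberman--Trudinger's boundary estimate for concave fully nonlinear equations with oblique conditions promotes the estimate to $C^{2,\alpha}$, and Schauder bootstrapping yields $C^{3,\alpha}$. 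So the whole problem reduces to uniform $C^2$ control.

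For the $C^0$ and $C^1$ bounds I would imitate Ma--Qiu. The bound $\|u\|_{C^0}$ comes from $a>0$: comparing $u$ with constants and using $\kappa$-admissibility (so $\Delta u\geq c>0$ type information) controls $\max u$ and $\min u$ via the maximum principle applied at interior extrema together with the oblique condition at boundary extrema. For $C^1$, I would run the standard auxiliary function $\Phi=|Du|^2 e^{\phi(u)}$ globally, computing $L\Phi$ with $L$ the linearization and eliminating the boundary maximum by the Hopf lemma; the signs work out because $a>0$ makes $u_\nu$ controllable. The presence of $m\geq 2$ does not materially change these arguments since they depend only on the ellipticity structure.

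The decisive step is the boundary $C^2$ estimate, and this is where the hypothesis $k=C_{n-1}^{m-1}+k_0$ with strict $(m,k_0)$-convexity enters. At a boundary point $x_0$, after rotating so the $n$-axis is along $\nu$, the matrix $W$ splits (up to $O(|u_{\tau\nu}|+|Du|)$ terms) into a ``normal block'' indexed by the $C_{n-1}^{m-1}$ multi-indices containing $n$ (which involve $u_{\nu\nu}$) and a ``tangential block'' indexed by the $C_{n-1}^m$ multi-indices not containing $n$ (which involve only $u_{\tau\tau}$). Differentiating the Neumann condition twice tangentially gives the identity $u_{\tau\tau}(x_0)=\kappa_\tau(au-b)+\text{controllable}$, so each tangential eigenvalue of the Hessian is, modulo known quantities, a linear combination of the principal curvatures $\kappa_i$. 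Consequently the tangential block of $W$ is, up to controllable error, a linear combination of $\kappa_1,\dots,\kappa_{n-1}$ of exactly the form appearing in the definition of $\Gamma_{k_0}^{(m)}$. Mixed $u_{\tau\nu}$ is handled by a barrier of the type $Ad(x)-B d(x)^2$ where $d$ is the distance to $\partial\Omega$, with $A,B$ determined by the previously obtained tangential bound. The double normal $u_{\nu\nu}$ bound is then obtained from the equation: the restriction of $S_k$ to the block decomposition shows that $u_{\nu\nu}$ enters only through the normal block, whose dimension is $C_{n-1}^{m-1}$, so $S_k(W)=f$ becomes a polynomial of degree at most $\min(k,C_{n-1}^{m-1})$ in $u_{\nu\nu}$ whose leading coefficient is $S_{k_0}$ of the tangential block; strict $(m,k_0)$-convexity makes that coefficient uniformly positive, yielding an a priori upper bound on $u_{\nu\nu}$.

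The main obstacle I anticipate is precisely this last step: proving that the ``leading coefficient in $u_{\nu\nu}$'' of $S_k(W)$ is, up to controllable perturbation, a multiple of $S_{k_0}(\kappa)$ evaluated in the $\Gamma_{k_0}^{(m)}$ sense. This requires a careful combinatorial expansion of $S_k$ on the block-decomposed $W$, exploiting the off-diagonal structure encoded in \eqref{w2}--\eqref{w3}. All other pieces (interior $C^2$ via an auxiliary function in $\log\lambda_{\max}(W)$, openness, uniqueness by the comparison principle for $k$-admissible solutions) are routine once this combinatorial identity is in hand.
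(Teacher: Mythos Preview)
Your overall architecture (continuity method, reduction to a uniform $C^2$ bound, then Evans--Krylov/Lieberman--Trudinger for $C^{2,\alpha}$) matches the paper, and the $C^0$ and $C^1$ parts are fine in outline. The genuine gap is in your boundary $C^2$ argument.

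You write that ``differentiating the Neumann condition twice tangentially gives $u_{\tau\tau}(x_0)=\kappa_\tau(au-b)+\text{controllable}$''. This is false: it is the Dirichlet mechanism, not the Neumann one. Differentiating $u_\nu=-au+b$ once in a tangential direction $\tau$ yields control of the \emph{mixed} derivative $u_{\tau\nu}$ (up to curvature times $Du$), and differentiating again produces a relation between the \emph{third} derivative $u_{\tau\tau\nu}$ and $u_{\tau\tau}$; at no point does the Neumann data determine $u_{\tau\tau}$ by itself. Consequently your ``tangential block of $W$'' is not known modulo lower order, and the scheme ``read off $u_{\nu\nu}$ from $S_k(W)=f$ once the tangential block is fixed, using that the leading coefficient is $S_{k_0}$ of the curvatures'' collapses. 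This is precisely the obstruction that kept the Neumann problem for $k$-Hessian equations open until Ma--Qiu.

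What the paper actually does is quite different. First (Lemma~\ref{th4.4}) an auxiliary function of Lions--Trudinger--Urbas type reduces $\sup_{\overline\Omega}|D^2u|$ to $N=\sup_{\partial\Omega}|u_{\nu\nu}|$; here the hypothesis $a+2\kappa_{\min}>0$ is used at a tangential boundary maximum. Then $N$ is bounded by a barrier argument in a tube $\Omega_\mu$: with $h=-d+K_3d^2$, the $(m,k_0)$-convexity of $\partial\Omega$ is used (Lemma~\ref{le5.2}) to show $h$ is $k$-admissible and $F^{ij}h_{ij}\ge k_3(1+\mathcal F)$. One then shows that $P=g\,(Du\cdot Dh-\psi)\mp(A+\sigma N)h$ are sub/super solutions in $\Omega_\mu$ (Lemmas~\ref{le4.3}, \ref{le4.4}); the analysis splits into several cases for the spectrum of $W$ and relies on Propositions~\ref{pro7} and~\ref{pro5}, which is where the restriction $m\le n/2$ and $k\le\frac{n-m}{n}C_n^m$ enter. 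So the $(m,k_0)$-convexity is used to make the \emph{barrier} admissible, not to identify a ``leading coefficient in $u_{\nu\nu}$'' in the equation. Your combinatorial expansion idea, while natural for Dirichlet data, does not get off the ground here.
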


The rest of this paper is arranged as follows. In section \ref{sec2}, we give some basic properties
of the elementary symmetric functions. In section \ref{sec3} and section \ref{sec4},
we establish $C^0$ estimates and the gradient estimates, interior and global. Specifically,
we extend the interior gradient estimates in Chou and Wang \cite{cw} to our cases.
In section \ref{sec5}, we show the proof of the global estimates of second order derivatives.
Finally, we can prove the existence theorem by the method of continuity in section \ref{sec6}.

\section{Preliminary}\label{sec2}

In this section, we give some basic properties of elementary symmetric functions.

First, we denote by
$S_{k}(\lambda|i)$ the symmetric function with $\lambda_{i} = 0$ and $S_{k}(\lambda|ij)$ the symmetric function with
$\lambda_{i} = \lambda_{j} = 0$.
\begin{proposition}\label{pro1}
  Let $\lambda=(\lambda_{1}, \cdots, \lambda_{n})\in \mathbb{R}^{n}$ and $k=1, \cdots, n$, then
  \begin{eqnarray}
    &&\sigma_{k}(\lambda)=\sigma_{k}(\lambda|i)+\lambda_{i}\sigma_{k-1}(\lambda|i),\quad \forall 1\leq i\leq n,\\
    &&\sum_{i=1}^{n}\lambda_{i}\sigma_{k-1}(\lambda|i)=k\sigma_{k}(\lambda),\\
    &&\sum_{i=1}^{n}\sigma_{k}(\lambda|i)=(n-k)\sigma_{k}(\lambda).
  \end{eqnarray}
\end{proposition}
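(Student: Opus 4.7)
The plan is to prove all three identities directly from the definition
\[
\sigma_k(\lambda) = \sum_{1\leq i_1<\cdots<i_k\leq n} \lambda_{i_1}\cdots \lambda_{i_k},
\]
by a simple bookkeeping of which $k$-element index subsets contain, or avoid, a given index $i$. Throughout, I will use that $\sigma_k(\lambda|i)$ is precisely the $k$-th elementary symmetric polynomial in the $n-1$ variables obtained by setting $\lambda_i=0$.

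For the first identity, I would fix $i\in\{1,\ldots,n\}$ and partition the $k$-subsets $\{i_1<\cdots<i_k\}\subset\{1,\ldots,n\}$ appearing in the sum into two classes: those not containing $i$, whose contribution is exactly $\sigma_k(\lambda|i)$, and those containing $i$, which after pulling out $\lambda_i$ are in bijection with the $(k-1)$-subsets of $\{1,\ldots,n\}\setminus\{i\}$ and contribute $\lambda_i \sigma_{k-1}(\lambda|i)$. This gives the decomposition in the first displayed equation.

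The second and third identities then follow by summing the first over $i$ and counting multiplicities. A fixed monomial $\lambda_{i_1}\cdots\lambda_{i_k}$ of $\sigma_k(\lambda)$ appears in the term $\lambda_i \sigma_{k-1}(\lambda|i)$ precisely for $i\in\{i_1,\ldots,i_k\}$, so for exactly $k$ values of $i$; this yields $\sum_i \lambda_i \sigma_{k-1}(\lambda|i)=k\sigma_k(\lambda)$. Dually, the same monomial appears in $\sigma_k(\lambda|i)$ precisely when $i\notin\{i_1,\ldots,i_k\}$, i.e.\ for $n-k$ values of $i$, yielding $\sum_i \sigma_k(\lambda|i)=(n-k)\sigma_k(\lambda)$. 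Alternatively, the third identity can be read off as a formal consequence of the first two by summing the first identity over $i$ and substituting the second.

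There is no real obstacle here; these are standard combinatorial identities and the only care required is to track the multiplicities in the index bookkeeping. The three identities are stated here because they will be invoked repeatedly in the a priori estimates of later sections, so the proof is essentially a warm-up lemma rather than a step with a genuine difficulty.
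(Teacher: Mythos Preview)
Your argument is correct and complete: the partition of $k$-subsets according to whether they contain the fixed index $i$ gives the first identity, and the multiplicity count (each monomial counted $k$ times, respectively $n-k$ times) gives the other two. The paper itself states Proposition~\ref{pro1} without proof, treating these as standard elementary symmetric function identities, so there is nothing to compare against; your write-up simply supplies the omitted details in the expected way.
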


We denote by $S_{k}(W|i)$ the symmetric function with $W$ deleting the $i$-row and
$i$-column and $S_{k}(W|ij)$ the symmetric function with $W$ deleting the $i, j$-rows and $i, j$-columns.
 We also define the mixed symmetric functions as follows, for $A=(a_{ij})_{n\times n}$, $B=(b_{ij})_{n\times n}$, $0\leq l\leq k\leq n$,
\begin{eqnarray}
  S_{k,l}(A,B)=\frac{1}{k!}\sum \delta_{j_1\cdots j_{k-l}j_{k-l+1}\cdots j_k}^{i_1\cdots i_{k-l}i_{k-l+1}\cdots i_k}
  a_{i_1j_1}\cdots a_{i_{k-l}j_{k-l}}b_{i_{k-l+1}j_{k-l+1}}\cdots b_{i_k j_k},\nonumber
\end{eqnarray}
where $\delta_{j_1\cdots j_{k-l}j_{k-l+1}\cdots j_k}^{i_1\cdots i_{k-l}i_{k-l+1}\cdots i_k}$ is the Kronecker symbol. It is easy to see that
\begin{eqnarray}
  S_k(A+B)=\sum_{i=0}^kC_k^iS_{k,i}(A,B),\label{2.0}
\end{eqnarray}
where $C_k^i=\frac{k!}{i!(k-i)!}$.
Then we have the following identities.

\begin{proposition}\label{pro2}
  Suppose $A=(a_{ij})_{n\times n}$ is diagonal, and $k$ is a positive integer, then
  \begin{equation}\label{s1}
    \frac{\partial S_{k}(A)}{\partial a_{ij}}=
    \left\{
    \begin{aligned}
    &S_{k-1}(A|i),&&\quad\text{if} \ i=j,&\\
    &0,&&\quad\text{if} \ i\neq j.&
    \end{aligned}
    \right.
  \end{equation}
  Furthermore, suppose $W=(w_{\overline{\alpha}\overline{\beta}})_{C_n^m\times C_n^m}$ defined as in (\ref{w0}) is diagonal, then
  \begin{equation}\label{sw1}
    \frac{\partial S_{k}(W)}{\partial u_{ij}}=
    \left\{
    \begin{aligned}
    &\sum_{i\in\overline{\alpha}}S_{k-1}(W|N_{\overline{\alpha})},&&\quad\text{if} \ i=j,&\\
    &0,&&\quad\text{if} \ i\neq j.&
    \end{aligned}
    \right.
  \end{equation}
\end{proposition}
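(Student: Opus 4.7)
The plan is to prove the two identities in sequence, the second by a chain-rule reduction to the first. For the first identity, the natural tool is the invariant representation
\[
S_k(A) = \frac{1}{k!}\sum \delta^{i_1\cdots i_k}_{j_1\cdots j_k}\, a_{i_1 j_1}\cdots a_{i_k j_k},
\]
which upon differentiation yields
\[
\frac{\partial S_k(A)}{\partial a_{pq}} = \frac{1}{(k-1)!}\sum \delta^{p\,i_2\cdots i_k}_{q\,j_2\cdots j_k}\, a_{i_2 j_2}\cdots a_{i_k j_k}.
\]
When $A$ is diagonal, each surviving factor $a_{i_l j_l}$ forces $i_l = j_l$; coupled with the generalized Kronecker delta this also forces $p = q$, so the derivative vanishes for $p \ne q$. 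When $p = q$, the sum collapses to $S_{k-1}$ in the diagonal entries other than $a_{pp}$, which is precisely $S_{k-1}(A|p)$.

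For the second identity, I would apply the chain rule
\[
\frac{\partial S_k(W)}{\partial u_{ij}} = \sum_{\overline{\alpha},\overline{\beta}} \frac{\partial S_k(W)}{\partial w_{\overline{\alpha}\overline{\beta}}}\cdot\frac{\partial w_{\overline{\alpha}\overline{\beta}}}{\partial u_{ij}}.
\]
Since $W$ is diagonal, the first identity, applied with $A = W$ on the $C_n^m\times C_n^m$ index set, shows that $\partial S_k(W)/\partial w_{\overline{\alpha}\overline{\beta}}$ vanishes unless $\overline{\alpha} = \overline{\beta}$, in which case it equals $S_{k-1}(W|N_{\overline{\alpha}})$. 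Thus only the diagonal entries $w_{\overline{\alpha}\overline{\alpha}}$ contribute. By (\ref{w1}), these satisfy $w_{\overline{\alpha}\overline{\alpha}} = \sum_{l=1}^m u_{\alpha_l\alpha_l}$, a function of the diagonal Hessian entries alone. Consequently, for $i \ne j$ the whole sum vanishes; and for $i = j$ the derivative $\partial w_{\overline{\alpha}\overline{\alpha}}/\partial u_{ii}$ equals $1$ exactly when $i \in \overline{\alpha}$, which delivers the stated formula.

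I do not anticipate a serious obstacle; the argument is essentially the standard symmetric-function derivative combined with the simple linear structure of the diagonal entries of $W$. The one point worth verifying is that $u_{ii}$ does not appear in any off-diagonal $w_{\overline{\alpha}\overline{\beta}}$, so that no hidden off-diagonal contribution is missed in the chain rule. This follows directly from (\ref{w2}) and (\ref{w3}): the former requires $\alpha_i \ne \beta_j$, so a diagonal Hessian entry $u_{ii}$ can never appear as an off-diagonal entry of $W$, while the latter eliminates all multi-index pairs differing by more than one element. Once this is recorded, the computation is immediate.
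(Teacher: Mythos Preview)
Your proof is correct and follows essentially the same route as the paper, which cites Lieberman for (\ref{s1}) and then applies the chain rule together with (\ref{w1})--(\ref{w3}) to obtain (\ref{sw1}). Your final check that $u_{ii}$ never appears in an off-diagonal $w_{\overline{\alpha}\overline{\beta}}$ is in fact unnecessary---once $W$ is diagonal the factor $\partial S_k(W)/\partial w_{\overline{\alpha}\overline{\beta}}$ already kills every off-diagonal contribution regardless of $\partial w_{\overline{\alpha}\overline{\beta}}/\partial u_{ij}$---but the observation is harmless.
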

\begin{proof}
  For (\ref{s1}), see a proof in  \cite{l2}.

  Note that
  \begin{eqnarray}
    \frac{\partial S_k(W)}{\partial u_{ij}}=\sum_{\overline{\alpha},\overline{\beta}}\frac{\partial S_k(W)}{\partial w_{\overline{\alpha}\overline{\beta}}}\frac{\partial w_{\overline{\alpha}\overline{\beta}}}{\partial u_{ij}},
  \end{eqnarray}
   Using (\ref{w1}), (\ref{w2}), and (\ref{w3}), (\ref{sw1}) is immediately a consequence of (\ref{s1}).
\end{proof}

  Recall that the Garding's cone is defined as
  \begin{eqnarray*}
    \Gamma_{k}=\{\lambda\in\mathbb{R}^{n} |\  S_{i}(\lambda)>0, \forall \ 1\leq i\leq k\}.
  \end{eqnarray*}
\begin{proposition}\label{pro3}
  Let $\lambda \in\Gamma_{k}$ and $k\in\{1, 2, \cdots, n\}$. Suppose that
  \begin{eqnarray*}
    \lambda_{1}\geq\cdots\geq\lambda_{k}\geq\cdots\geq\lambda_{n},
  \end{eqnarray*}
  then we have
  \begin{eqnarray}
     &&S_{k-1}(\lambda|n)\geq \cdots \geq S_{k-1}(\lambda|k) \geq \cdots \geq S_{k-1}(\lambda |1) >0,\label{2.1}\\
     &&\lambda_1 \geq \cdots \geq \lambda_k >0, \quad S_{k-1}(\lambda|k)\geq C(n,k) S_k(\lambda),\label{2.2}\\
     &&\lambda_1 S_{k-1} (\lambda |1) \geq \frac{k}{n} S_k(\lambda),\label{2.3}\\
     &&S_{k}^{\frac{1}{k}}(\lambda) \ \text{is concave in} \ \Gamma_k.\label{2.8}
  \end{eqnarray}
  where $C^k_n=\frac{n!}{k!(n-k)!}$ and $C(n,k)$ is a
  positive constant depends only on $n$ and $k$.
\end{proposition}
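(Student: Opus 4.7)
All four assertions are standard properties of Garding's cone, proved from the Newton identities of Proposition~\ref{pro1} and the restriction property $\lambda\in\Gamma_k\Rightarrow\lambda|1\in\Gamma_{k-1}$ for ordered $\lambda$; this iterates to give $S_{k-j}(\lambda|i_1\cdots i_j)>0$ for $j\le k$ (see \cite{l2}). Only (\ref{2.3}) needs a nontrivial regrouping; the other items are direct computations.

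I would begin with (\ref{2.1}). From $S_{k-1}(\lambda|l)=S_{k-1}(\lambda|il)+\lambda_i S_{k-2}(\lambda|il)$ one reads off
\[
S_{k-1}(\lambda|j)-S_{k-1}(\lambda|i)=(\lambda_i-\lambda_j)\,S_{k-2}(\lambda|ij),
\]
and combining the positivity $S_{k-2}(\lambda|ij)>0$ with the monotonicity $\lambda_i\ge\lambda_j$ for $i<j$ yields the whole chain; strict positivity of the minimum $S_{k-1}(\lambda|1)$ is simply the fact that $\lambda|1\in\Gamma_{k-1}$. For (\ref{2.2}), the sign statement $\lambda_1\ge\cdots\ge\lambda_k>0$ is a classical induction on $k$ based on the positivity of every $S_{k-1}(\lambda|j)$ together with Newton's identity $\sum_j\lambda_j S_{k-1}(\lambda|j)=kS_k(\lambda)>0$; the inequality $S_{k-1}(\lambda|k)\ge C(n,k)S_k(\lambda)$ is then a direct consequence of (\ref{2.1}) combined with Newton--Maclaurin style bounds on $\Gamma_k$, again cited from \cite{l2}.

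The crux is (\ref{2.3}). Using Newton's identity $kS_k(\lambda)=\sum_i\lambda_i S_{k-1}(\lambda|i)$ together with the pairing relation from Step~1 applied to $1$ and $i$, I compute
\begin{align*}
n\lambda_1 S_{k-1}(\lambda|1)-kS_k(\lambda)
&=\sum_{i=2}^{n}(\lambda_1-\lambda_i)\,S_{k-1}(\lambda|1i)\\
&=\lambda_1(n-k)S_{k-1}(\lambda|1)-kS_k(\lambda|1),
\end{align*}
where the collapse uses the two Newton identities $\sum_{i\ne 1}S_{k-1}(\lambda|1i)=(n-k)S_{k-1}(\lambda|1)$ and $\sum_{i\ne 1}\lambda_i S_{k-1}(\lambda|1i)=kS_k(\lambda|1)$, applied to $\lambda|1\in\mathbb{R}^{n-1}$. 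If $S_k(\lambda|1)\le 0$ the right-hand side is manifestly $\ge 0$. Otherwise $\lambda|1\in\Gamma_k$ in $\mathbb{R}^{n-1}$, and the elementary bound $\mu_{\max}(N-k+1)S_{k-1}(\mu)\ge kS_k(\mu)$, obtained by majorizing $\mu_i\le\mu_{\max}$ in Newton's identity for $\mu$, applied with $\mu=\lambda|1$, $\mu_{\max}=\lambda_2\le\lambda_1$, $N=n-1$, gives exactly what is needed. Finally, (\ref{2.8}) is Garding's classical theorem on hyperbolic polynomials, for which I refer to \cite{l2}.

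\textbf{Main obstacle.} The subtle point is (\ref{2.3}): individual summands $(\lambda_1-\lambda_i)S_{k-1}(\lambda|1i)$ need not be nonnegative, because deleting two indices from a $\Gamma_k$-vector generically lands only in $\Gamma_{k-2}$, so $S_{k-1}(\lambda|1i)$ can change sign. One therefore has to work with the collapsed form of the sum and split on the sign of $S_k(\lambda|1)$ to close the argument.
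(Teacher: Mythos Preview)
Your proposal goes well beyond the paper's own proof, which consists entirely of citations: \cite{l2} and \cite{hs} for (\ref{2.1}), \cite{LiT} for (\ref{2.2}), \cite{cw} and \cite{hmw} for (\ref{2.3}), and \cite{cns2} for (\ref{2.8}). Against that baseline you supply actual arguments, and they are correct. Your treatment of (\ref{2.3}) is the substantive part: the identity
\[
n\lambda_1 S_{k-1}(\lambda|1)-kS_k(\lambda)\;=\;\lambda_1(n-k)S_{k-1}(\lambda|1)-kS_k(\lambda|1)
\]
follows immediately from $S_k(\lambda)=S_k(\lambda|1)+\lambda_1 S_{k-1}(\lambda|1)$, and the case split on the sign of $S_k(\lambda|1)$ closes the argument cleanly. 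Two minor points. First, your reference to ``the pairing relation from Step~1'' is a slight misdirection: that relation involves $S_{k-2}(\lambda|1i)$, not $S_{k-1}(\lambda|1i)$; what you actually use (and correctly state) are the two Newton identities for $\lambda|1\in\mathbb{R}^{n-1}$. Also, in Case~2 the majorization $kS_k(\mu)\le\mu_{\max}(N-k+1)S_{k-1}(\mu)$ is valid precisely because $\mu\in\Gamma_k$ forces every $S_{k-1}(\mu|i)>0$; it is worth making this explicit. Second, the printed inequality in (\ref{2.2}), $S_{k-1}(\lambda|k)\ge C(n,k)S_k(\lambda)$, is dimensionally inconsistent and false by scaling; the intended Lin--Trudinger statement (and the one the paper actually invokes later) is $S_{k-1}(\lambda|k)\ge C(n,k)S_{k-1}(\lambda)$. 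Since your handling of (\ref{2.2}) is itself a citation, this does not affect your argument, but you should flag the typo.
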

\begin{proof}
  All the properties are well known. For example,
  see \cite{l2} or \cite{hs} for a proof of (\ref{2.1}), \cite{LiT} for (\ref{2.2}),
  \cite{cw} or \cite{hmw} for (\ref{2.3}) and \cite{cns2} for (\ref{2.8}).
\end{proof}

The Newton-Maclaurin inequality is as follows,

\begin{proposition}\label{pro4}
  For  $\lambda\in\Gamma_{k}$ and $k>l\geq0$, we have
  \begin{eqnarray}\label{2.7}
    \big(\frac{S_{k}(\lambda)}{C_n^k}\big)^{\frac{1}{k}}\leq\big(\frac{S_{l}(\lambda)}{C_n^l}\big)^{\frac{1}{l}},
  \end{eqnarray}
  where $C_n^k=\frac{n!}{k!(n-k)!}$. Furthermore we have
  \begin{eqnarray}\label{2.10}
  \sum_{i=1}^{n}\frac{\partial S_{k}^{\frac{1}{k}}}{\partial\lambda_{i}}\geq[C_n^k]^{\frac{1}{k}}.
  \end{eqnarray}
\end{proposition}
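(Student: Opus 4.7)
The plan is to obtain (\ref{2.7}) as a consequence of the classical Newton--Maclaurin chain of inequalities, and then to derive (\ref{2.10}) by a one-line differentiation combined with (\ref{2.7}) and Proposition \ref{pro1}.

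For (\ref{2.7}), set $p_j(\lambda) = S_j(\lambda)/C_n^j$ and first establish Newton's inequality
\[
p_{j-1}(\lambda)\,p_{j+1}(\lambda) \leq p_j(\lambda)^2, \qquad 1 \leq j \leq k-1,
\]
which actually holds for every $\lambda \in \mathbb{R}^n$. The standard route is to note that the polynomial $P(t) = \prod_{i=1}^n (t + \lambda_i) = \sum_{j=0}^n C_n^j\, p_{n-j}\, t^j$ is real-rooted, and that repeated differentiation preserves real-rootedness by Rolle's theorem; differentiating $P$ down to a quadratic and reading off its discriminant yields the displayed inequality. For $\lambda \in \Gamma_k$ we have $p_0, p_1, \ldots, p_k > 0$, so setting $q_j = \log p_j$, Newton's inequality becomes the midpoint concavity $q_{j-1} + q_{j+1} \leq 2q_j$. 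Together with $q_0 = 0$, concavity forces the slope $q_j/j$ to be nonincreasing for $1 \leq j \leq k$, which is exactly $p_j^{1/j} \geq p_k^{1/k}$; specializing $j = l$ recovers (\ref{2.7}) in the nontrivial range $l \geq 1$.

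For (\ref{2.10}), I would compute directly
\[
\sum_{i=1}^n \frac{\partial S_k^{1/k}}{\partial \lambda_i}(\lambda) = \frac{1}{k} S_k(\lambda)^{(1-k)/k} \sum_{i=1}^n S_{k-1}(\lambda|i) = \frac{n-k+1}{k} \cdot \frac{S_{k-1}(\lambda)}{S_k(\lambda)^{(k-1)/k}},
\]
where the second equality uses the third identity of Proposition \ref{pro1} at level $k-1$. Now apply (\ref{2.7}) with $l = k-1$ to get $S_{k-1}(\lambda) \geq C_n^{k-1}\bigl(S_k(\lambda)/C_n^k\bigr)^{(k-1)/k}$; the elementary identity $\tfrac{n-k+1}{k}\,C_n^{k-1} = C_n^k$ then collapses the resulting bound to exactly $[C_n^k]^{1/k}$, which is (\ref{2.10}).

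The only nontrivial step is the real-rootedness/Rolle argument underpinning Newton's inequality, after which the passage to Maclaurin is pure log-concavity bookkeeping and (\ref{2.10}) follows from a single combinatorial identity. In keeping with the author's strategy of citing well-known facts, in practice I would just reference \cite{l2} for Newton--Maclaurin; the outline above fills in the route completely if needed.
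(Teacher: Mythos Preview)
Your proposal is correct and matches the paper's approach exactly for (\ref{2.10}): the paper computes $\sum_i \partial_{\lambda_i} S_k^{1/k} = \tfrac{n-k+1}{k} S_k^{1/k-1} S_{k-1}$ via Proposition~\ref{pro1} and then applies (\ref{2.7}) with $l=k-1$, just as you do. For (\ref{2.7}) the paper simply cites \cite{s} rather than sketching the Rolle/log-concavity argument, so your write-up is slightly more self-contained but otherwise identical in spirit.
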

\begin{proof}
  See \cite{s} for a proof of (\ref{2.7}). For (\ref{2.10}),
   we use (\ref{2.7}) and Proposition \ref{pro1} to get
  \begin{eqnarray}
    \sum_{i=1}^n\frac{\partial S_k^{\frac{1}{k}}(\lambda)}{\partial\lambda_i}=\frac{1}{k}S_k^{\frac{1}{k}-1}\sum^n_{i=1}S_{k-1}(\lambda|i)=
    \frac{n-k+1}{k}S_{k}^{\frac{1}{k}-1}S_{k-1}(\lambda)\geq[C_n^k]^{\frac{1}{k}}.\nonumber
  \end{eqnarray}
\end{proof}

Then we give some useful inequalities of elementary symmetric functions.

\begin{proposition}\label{pro6}
  Suppose $\lambda=(\lambda_{1}, \cdots, \lambda_{n})\in\Gamma_k$, $k\geq1$, satisfies $\lambda_1<0$. Then we have
  \begin{eqnarray}\label{2.5}
    \frac{\partial S_{k}(\lambda)}{\partial\lambda_1}\geq\frac{1}{n-k+1}\sum_{i=1}^{n}\frac{\partial S_k}{\partial \lambda_i}.
  \end{eqnarray}
  and
  \begin{eqnarray}\label{2.11}
    \sum_{i=1}^{n}\frac{\partial S_k(\lambda)}{\partial \lambda_i}\geq(-\lambda_1)^{k-1},\quad\forall 1\leq k\leq n.
  \end{eqnarray}
  \begin{proof}
    See Lemma 3.9 in \cite{c} for the proof of (\ref{2.5}), and \cite{cz} or \cite{cw} for (\ref{2.11}).
  \end{proof}
  \end{proposition}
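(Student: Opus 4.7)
The plan is to derive both inequalities from a single auxiliary observation: when $\lambda\in\Gamma_k$ and $\lambda_1\leq 0$, the sliced vector $(\lambda|1)=(\lambda_2,\dots,\lambda_n)\in\mathbb{R}^{n-1}$ still satisfies $S_j(\lambda|1)>0$ for every $1\leq j\leq k$. I would establish this first by induction on $j$ from the recurrence
\begin{equation*}
    S_j(\lambda|1)=S_j(\lambda)-\lambda_1\,S_{j-1}(\lambda|1)=S_j(\lambda)+(-\lambda_1)\,S_{j-1}(\lambda|1),
\end{equation*}
starting from $S_0(\lambda|1)=1$; the right-hand side stays positive because $S_j(\lambda)>0$ (from $\lambda\in\Gamma_k$), $-\lambda_1\geq 0$, and $S_{j-1}(\lambda|1)\geq 0$ by the inductive hypothesis.

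For (\ref{2.5}), I would apply the third identity of Proposition \ref{pro1} (with $k$ replaced by $k-1$) to write $\sum_i \partial S_k/\partial\lambda_i=\sum_i S_{k-1}(\lambda|i)=(n-k+1)\,S_{k-1}(\lambda)$, so the target inequality reduces to $S_{k-1}(\lambda|1)\geq S_{k-1}(\lambda)$. Using the first identity of Proposition \ref{pro1}, $S_{k-1}(\lambda)=S_{k-1}(\lambda|1)+\lambda_1 S_{k-2}(\lambda|1)$, this in turn is equivalent to $-\lambda_1\,S_{k-2}(\lambda|1)\geq 0$, which is immediate from $\lambda_1<0$ combined with the auxiliary fact (and the case $k=1$ is trivial, both sides agreeing).

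For (\ref{2.11}), I would first invoke Proposition \ref{pro3} line (\ref{2.1}) to conclude $S_{k-1}(\lambda|i)>0$ for every $i$. Dropping all summands except $i=1$ then gives $\sum_i\partial S_k/\partial\lambda_i\geq S_{k-1}(\lambda|1)$. To bound $S_{k-1}(\lambda|1)$ from below by $(-\lambda_1)^{k-1}$, I would set $s:=-\lambda_1>0$ and iterate the estimate $S_j(\lambda|1)>s\,S_{j-1}(\lambda|1)$, which is just a rearrangement of $S_j(\lambda)>0$ via the same recurrence. Starting from $S_0(\lambda|1)=1$ and iterating $k-1$ times yields $S_{k-1}(\lambda|1)>s^{k-1}$, closing the argument.

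I do not anticipate a substantial obstacle: the whole argument pivots on the one-line observation that removing a non-positive coordinate preserves membership in $\Gamma_k$, after which both inequalities fall out by routine bookkeeping with the identities in Propositions \ref{pro1} and \ref{pro3}. The only mildly subtle point is remembering that the positivity of $S_{k-1}(\lambda|i)$ for \emph{all} $i$ is a permutation-invariant consequence of (\ref{2.1}), so it applies regardless of which coordinate happens to carry the label ``$1$''.
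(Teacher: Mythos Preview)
Your argument is correct. The paper itself gives no proof, only pointers to \cite{c}, \cite{cz}, and \cite{cw}; you supply a clean self-contained argument based on the single observation that removing a non-positive coordinate from $\lambda\in\Gamma_k$ keeps the truncated vector in $\Gamma_k$ in $\mathbb{R}^{n-1}$. Both inequalities then drop out of the recurrence $S_j(\lambda|1)=S_j(\lambda)+(-\lambda_1)S_{j-1}(\lambda|1)$ together with the identities of Proposition~\ref{pro1}. This is in fact the standard route taken in the cited references, so your proof is not a genuinely different approach but rather an explicit write-up of what the paper defers to the literature; it has the merit of being entirely elementary and of making visible that the two parts of the proposition share a common mechanism.
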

The following  proposition is useful to establishments of
gradient estimates(for $f=f(x,u,Du)$) and double normal estimates(for $m\leq \frac{n}{2}$).
This proposition also indicates the  major difference between our cases$(m\geq2)$ and the $k$-Hessian$(m=1)$.
\begin{proposition}\label{pro7}
  Let $\mu=(\mu_1,\cdots,\mu_n)$ with $\mu_1\geq\cdots\geq\mu_n$,
  $\lambda=\{\mu_{i_1}+\mu_{i_2}+\cdots+\mu_{i_m}|  1\leq i_1< i_2<\cdots<i_m\leq n\}$
   and $2\leq k\leq  \frac{n-m}{n}C_n^m$.
   If $\mu\in\Gamma^{(m)}_k$ and $\mu_n<-\delta L<0$, where $\delta$ is a small positive constant,  then there exits a constant $\theta_1=(\frac{\delta^{k}}{(C_n^m)!4^k})^{k-1}$ such that
  \begin{eqnarray}
    \sum_{i=1}^{C_n^m}\frac{\partial S_k(\lambda)}{\partial \lambda_i}\geq \theta_1  L^{k-1}.\label{p71}
  \end{eqnarray}
  Furthermore, if in addition that $-\delta_1L\leq\lambda_i\leq mL$, $\forall 1\leq i\leq C_n^m$, with $\delta_1=\frac{\delta^{k}}{(C_n^m)!4^k}$,
  then there exists a constant $\theta_2=\frac{\delta^{k-1}}{2^km^{k-1}(C_n^m)^3}$, such that, for $1\leq j\leq C_n^m$
  \begin{eqnarray}
    \frac{\partial S_k(\lambda)}{\partial\lambda_{i}}\geq
    \theta_2 \sum_{j=1}^{C_n^m}\frac{\partial S_k(\lambda)}{\partial \lambda_j}.\label{p72}
  \end{eqnarray}
\end{proposition}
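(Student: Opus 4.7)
The strategy is to leverage inequality (\ref{2.11}) of Proposition \ref{pro6}, which guarantees $\sum_j \partial S_k(\lambda)/\partial \lambda_j \geq (-\lambda_i)^{k-1}$ for any index with $\lambda_i<0$. For (\ref{p71}) the key reduction is the following claim: \emph{some component $\lambda_{i^*}$ satisfies $\lambda_{i^*}\leq -\delta_1 L$}. Once the claim is in hand, (\ref{2.11}) applied to $\lambda\in\Gamma_k$ yields at once
$$\sum_{j=1}^{C_n^m}\frac{\partial S_k(\lambda)}{\partial\lambda_j}\;\geq\;(-\lambda_{i^*})^{k-1}\;\geq\;(\delta_1 L)^{k-1}\;=\;\theta_1 L^{k-1}.$$

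To produce $\lambda_{i^*}$, the natural candidate is $\lambda_{i^*}:=\mu_{n-m+1}+\mu_{n-m+2}+\cdots+\mu_n$, the smallest $m$-sum. If it is already $\leq -\delta_1 L$ we are done. Otherwise, combined with $\mu_n<-\delta L$, the inequality $\lambda_{i^*}>-\delta_1 L$ forces $\mu_{n-m+1}+\cdots+\mu_{n-1}>(\delta-\delta_1)L\geq \delta L/2$; since these are the $m-1$ smallest of $\mu_1\geq\cdots\geq\mu_{n-1}$, monotonicity gives $\mu_{n-m+1}\geq \delta L/(2(m-1))$, and hence $\mu_j\geq \delta L/(2(m-1))$ for every $j\leq n-m+1$. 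The $C_{n-m+1}^{m}$ sums whose indices all lie in $\{1,\ldots,n-m+1\}$ are then each bounded below by a positive multiple of $L$. The remaining step is to combine these ``bulk'' positive sums with the cone conditions $S_1(\lambda),\ldots,S_k(\lambda)>0$ from Proposition \ref{pro3} to contradict $\lambda_{i^*}>-\delta_1 L$ for $\delta_1=\delta^k/(C_n^m)!/4^k$. The very small explicit constant, together with the factorial denominator and the geometric $4^k$, strongly suggests an induction on $k$ with a pigeonhole/combinatorial step at each level, losing roughly a factor of $\delta/4$ per stage. Making this induction effective and uniform in $n,m$ is the principal obstacle I expect to meet.

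For (\ref{p72}), the additional upper bound $\lambda_i\leq mL$ gives the uniform upper estimate
$$\sum_{j=1}^{C_n^m}\frac{\partial S_k(\lambda)}{\partial\lambda_j}\;=\;(C_n^m-k+1)\,S_{k-1}(\lambda)\;\leq\;(C_n^m-k+1)\,C_{C_n^m}^{k-1}(mL)^{k-1},$$
so it suffices to show $\partial S_k/\partial\lambda_i\geq c L^{k-1}$ for every $i$, with $c$ depending only on $n,m,k,\delta$. For indices $i$ with $\lambda_i\leq 0$, (\ref{2.5}) gives $\partial S_k/\partial\lambda_i\geq (C_n^m-k+1)^{-1}\sum_j\partial S_k/\partial\lambda_j$, which combined with (\ref{p71}) is exactly the required lower bound. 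For $\lambda_i>0$, the method used to prove (\ref{p71}) can be re-run on the truncated vector $\lambda|i\in\Gamma_{k-1}$ (the negativity of $\mu_n$ is still encoded in the remaining components), yielding $S_{k-1}(\lambda|i)\geq c' L^{k-1}$ directly. Dividing this lower bound by the uniform upper bound on the sum gives the stated $\theta_2$; tracking the dependence on $\delta$, $m$, and $C_n^m$ is what produces the explicit $\theta_2=\delta^{k-1}/(2^k m^{k-1}(C_n^m)^3)$.
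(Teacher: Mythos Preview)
Your reduction to a single claim --- that \emph{some} component satisfies $\lambda_{i^*}\leq -\delta_1 L$ --- is false, and this is the essential gap. Take for instance $n=4$, $m=2$, $\delta=\tfrac12$, and $\mu=(10L,10L,10L,-L)$: then $\mu_n<-\delta L$, yet every pairwise sum is positive, so no such $\lambda_{i^*}$ exists. The cone conditions $S_1(\lambda),\dots,S_k(\lambda)>0$ cannot manufacture a contradiction here, and the vague ``induction on $k$ with a pigeonhole step'' you sketch has no target to hit. Consequently Proposition~\ref{pro6} is simply unavailable in this regime, and your proof of \eqref{p71} stalls.

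What the paper does in this second case ($\lambda_{C_n^m}\geq -\delta_1 L$) is the opposite of seeking negativity: it exploits \emph{positivity}. From $\mu_n<-\delta L$ and $\lambda_{C_n^m}\geq -\delta_1 L$ one gets $\mu_{n-m+1}+\cdots+\mu_{n-1}\geq \tfrac{\delta}{2}L$, and hence every $m$-sum that avoids the index $n$ is at least $\tfrac{\delta}{2}L$. There are $C_{n-1}^m=\tfrac{n-m}{n}C_n^m\geq k$ such sums --- this is precisely where the hypothesis $k\leq\tfrac{n-m}{n}C_n^m$ enters. With at least $k$ components bounded below by $\tfrac{\delta}{2}L$ and the remaining ones bounded in $[-\delta_1 L,\tfrac{\delta}{2}L]$, one estimates $S_{k-1}(\lambda)$ directly via the mixed-symmetric-function expansion \eqref{2.0}: the ``bulk'' part $S_{k-1}(\lambda')$ dominates because $\delta_1$ was chosen small enough to make the cross terms negligible. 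This yields $\sum_i\partial S_k/\partial\lambda_i=(C_n^m-k+1)S_{k-1}(\lambda)\gtrsim(\delta L)^{k-1}$ without any appeal to \eqref{2.11}. The same structure drives \eqref{p72}: the additional hypothesis $-\delta_1 L\leq\lambda_i$ places you in this second case from the outset, and comparing $S_{k-1}(\lambda|1)$ to $S_{k-1}(\lambda')$ (both sides now controlled by the same bulk) gives the ratio $\theta_2$. Your proposed re-run on $\lambda|i$ would not work directly, since $\lambda|i$ is no longer a vector of $m$-sums of any $\mu$.
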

\begin{proof}
  Let $\lambda_1\geq\cdots\geq\lambda_{C_n^m}$.
   We consider the following two cases.

  \textbf{Case1.} $\lambda_{C_n^m}<-\delta_1L$, where $\delta_1=\frac{\delta^{k}}{(C_n^m)!4^k}$.

  It is exactly the case in Proposition \ref{pro6}, so we have
  \begin{eqnarray}
    \sum_{i=1}^{C_n^m}\frac{\partial S_k(\lambda)}{\partial \lambda_i}\geq(\delta_1L)^{k-1}.\label{p72.1}
  \end{eqnarray}

  \textbf{Case2.} $\lambda_{C_n^m}\geq-\delta_1L$.


  We  see that
  \begin{eqnarray}
    \lambda_{C_{n}^{m}}=\sum_{i=n-m+1}^{n-1}\mu_{i}+\mu_{n}\geq-\delta_{1}L.\nonumber
  \end{eqnarray}
  Since $\mu_{n}<-\delta L$ and $\delta_{1}<\frac{\delta}{2}$, we obtain
  \begin{eqnarray}
 \sum_{i=n-m+1}^{n-1}\mu_{i}\geq\frac{\delta}{2}L,\quad \ \mu_{n-m+1}>0.\nonumber
  \end{eqnarray}
  It follows that
  \begin{eqnarray}\label{4.58}
    \lambda_{C_{n}^{m}-C_{n-1}^{m-1}}\geq\sum_{i=n-m+1}^{n-1}\mu_{i}+\mu_{n-m}>\frac{\delta}{2}L.
  \end{eqnarray}

  Now we can write
  \begin{eqnarray}
    \lambda_1\geq\cdots\geq\lambda_{p}\geq\frac{\delta}{2}L\geq\lambda_{p+1}\geq\cdots\geq\lambda_{q}>0\geq\lambda_{q+1}
    \geq\cdots\lambda_{C_{n}^{m}}\geq-\delta_{1}L.\nonumber
  \end{eqnarray}
Denote $\lambda'=(\lambda_1,\cdots, \lambda_{p})$, $\lambda''=(\lambda_{1},\cdots,\lambda_{q})$, and $\lambda'''=(\lambda_{q+1},\cdots,\lambda_{C_{n}^{m}})$. We pint out that $\lambda'''$ may be empty. From (\ref{4.58}) we see that
\begin{eqnarray}
  p\geq C_{n}^{m}-C_{n-1}^{m-1}\geq k,\nonumber
\end{eqnarray}
and, use $\lambda_1\leq mL$ (only for the second inequality of (\ref{4.60})) to get
\begin{eqnarray}\label{4.60}
  C_{p}^{k-1}(\frac{\delta}{2})^{k-1}L^{k-1}\leq S_{k-1}(\lambda')\leq C_{p}^{k-1}m^{k-1}L^{k-1}.
\end{eqnarray}
We also have
\begin{eqnarray}
  S_{k-1}(\lambda')\leq S_{k-1}(\lambda'')\leq(C_{n}^{m}-1)S_{k-1}(\lambda'),\label{4.61}
\end{eqnarray}
since every element of $\lambda''$ is positive.

By Proposition \ref{pro2} and (\ref{2.0}), we have
\begin{eqnarray}
  \sum_{i=1}^{C_n^m}\frac{\partial S_k(\lambda)}{\partial \lambda_i}&=&\sum_{i=1}^{C_{n}^{m}}S_{k-1}(\lambda|i)=(C_{n}^{m}-k+1)S_{k-1}(\lambda)\nonumber\\
  &=&(C_{n}^{m}-k+1)[S_{k-1}(\lambda'')+\sum_{i=1}^{k-1}C_k^iS_{k-1,i}(\lambda'',\lambda''')],\label{4.62a}
\end{eqnarray}
where $S_{k-1,i}(\lambda'',\lambda''')$ is the mixed symmetric function. Recall $\delta_{1}=\frac{\delta^{k}}{(C_n^m)!4^k}$ and (\ref{4.60}), such that
\begin{eqnarray}
  |\sum_{i=1}^{k-1}C_k^iS_{k-1,i}(\lambda'',\lambda''')|\leq (\frac{\delta}{2})^{k}L^{k-1}\leq\frac{1}{2} S_{k-1}(\lambda').\label{4.62b}
\end{eqnarray}
Plug (\ref{4.61}) and (\ref{4.62b}) into (\ref{4.62a}),
\begin{eqnarray}\label{4.63}
  \frac{(C_{n}^{m}-k+1)}{2}S_{k-1}(\lambda')\leq\sum_{i=1}^{C_n^m}\frac{\partial S_k(\lambda)}{\partial \lambda_i}\leq C_{n}^{m}(C_{n}^{m}-k+1)S_{k-1}(\lambda').
\end{eqnarray}
 Note that we don't need $\lambda_i\leq mL$ in the first inequality. Combining (\ref{p72.1}), (\ref{4.60}) and (\ref{4.63}), we prove the (\ref{p71}).

We also have
\begin{eqnarray}
  S_{k-1}(\lambda|1)
  &\geq&S_{k-1}(\lambda'|1)+\sum_{i=1}^{k-1}C_k^iS_{k-1,i}(\lambda''|1,\lambda''').
\end{eqnarray}
Due to $p\geq k$, $\delta_{1}=\frac{\delta^{k-1}}{(C_n^m)!4^k}$, (\ref{4.60}) and (\ref{4.63}), we have
\begin{eqnarray}
 S_{k-1}(\lambda|1)&\geq&\frac{1}{2}S_{k-1}(\lambda'|1)\geq\frac{\delta^{k-1}}{2^km^{k-1}C_n^m}S_{k-1}(\lambda')\nonumber\\
  &\geq&\frac{\delta^{k-1}}{2^km^{k-1}(C_n^m)^3}\sum_{i=1}^{C_n^m}\frac{\partial S_k(\lambda)}{\partial \lambda_i}.\nonumber
\end{eqnarray}
Then we proved the (\ref{p72}) since $S_{k-1}(\lambda|i)\geq S_{k-1}(\lambda|1)$ for $1\leq i\leq C_n^m$.
\end{proof}

%
  Finally, we give a key inequality which play an important role in the establishment of the
   double normal derivative estimate(see Theorem \ref{th5.2}).

  \begin{proposition}\label{pro5}
    Suppose $\lambda=(\lambda_1, \cdots, \lambda_n)\in\Gamma_k$, $k\geq2$, and $\lambda_2\geq\cdots\geq\lambda_n$. If $\lambda_1>0$, $\lambda_1\geq\delta\lambda_2$, and $\lambda_n\leq-\varepsilon\lambda_1$ for small positive constants $\delta$ and $\varepsilon$, then
    we have
    \begin{eqnarray}\label{2.13}
      S_{l}(\lambda|1)\geq c_0 S_l(\lambda),\quad\forall l=0, 1,\cdots, k-1,
    \end{eqnarray}
    where $c_0=\min\{\frac{\varepsilon^2\delta^2}{2(n-2)(n-1)}, \frac{\varepsilon^2\delta}{4(n-1)}\}$.
  \end{proposition}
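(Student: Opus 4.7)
The plan is to rewrite the desired inequality using the Newton-type identity $S_l(\lambda) = S_l(\lambda|1) + \lambda_1 S_{l-1}(\lambda|1)$ from Proposition \ref{pro1}, which turns $S_l(\lambda|1) \geq c_0 S_l(\lambda)$ into
\[
(1 - c_0)\, S_l(\lambda|1) \;\geq\; c_0\, \lambda_1\, S_{l-1}(\lambda|1).
\]
The base case $l=0$ is immediate since $S_0(\lambda|1) = 1 = S_0(\lambda)$, so I focus on $l \geq 1$ and split according to the sign of $S_{l-1}(\lambda|1)$.

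First, suppose $S_{l-1}(\lambda|1) \leq 0$. Then the right-hand side is non-positive, and since $\lambda \in \Gamma_k$ with $l \leq k-1 < k$ gives $S_l(\lambda) > 0$, one has $S_l(\lambda|1) = S_l(\lambda) - \lambda_1 S_{l-1}(\lambda|1) \geq S_l(\lambda) > 0$, so the inequality holds for any $c_0 \in (0,1)$. This disposes of the easy regime.

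The main case is $S_{l-1}(\lambda|1) > 0$. The idea is to isolate the contribution of the large negative eigenvalue $\lambda_n$ by writing, with $\hat{\lambda} := (\lambda_2, \ldots, \lambda_{n-1})$,
\[
S_l(\lambda|1) = S_l(\hat{\lambda}) + \lambda_n S_{l-1}(\hat{\lambda}), \qquad S_{l-1}(\lambda|1) = S_{l-1}(\hat{\lambda}) + \lambda_n S_{l-2}(\hat{\lambda}).
\]
The hypotheses $\lambda_1 \geq \delta \lambda_2$ and $\lambda_n \leq -\varepsilon \lambda_1$ yield $-\lambda_n \geq \varepsilon \lambda_1 \geq \varepsilon \delta \lambda_2$, and a short check using $S_2(\lambda) > 0$ (together with $\lambda_1 > 0$ and Newton's inequality on the negative eigenvalues) forces $\lambda_2 > 0$. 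Substituting back and using the bounds on $|\lambda_n|$ and $\lambda_2$, I would then split into two sub-cases according to whether $|\lambda_n S_{l-2}(\hat{\lambda})|$ dominates $S_{l-1}(\hat{\lambda})$ or vice versa. In the first regime the quadratic contribution $\lambda_1 |\lambda_n| \geq \varepsilon \lambda_1^2$ (coupled once with $\delta$) produces the constant $\varepsilon^2 \delta/(4(n-1))$, with the $(n-1)$ counting the eigenvalues in $\lambda|1$; in the second regime one must bring in a second power of $\delta$ (from $\lambda_2 \leq \lambda_1/\delta$) and pair $\lambda_n$ against each of the remaining $n-2$ entries of $\hat{\lambda}$, yielding $\varepsilon^2 \delta^2/(2(n-1)(n-2))$. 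Taking the minimum of the two sub-cases gives the stated $c_0$.

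The main obstacle is exactly the quantitative bookkeeping in the second case: neither $\hat{\lambda}$ nor $\lambda|1$ lies a priori in any Garding cone, so the signs of $S_{l-2}(\hat{\lambda}), S_{l-1}(\hat{\lambda}), S_l(\hat{\lambda})$ have to be handled by partitioning the indices $\{3,\dots,n-1\}$ into positive and non-positive subsets and controlling the resulting mixed sums. Once those signs are tracked, the hypotheses $\lambda_1 \geq \delta \lambda_2$, $\lambda_n \leq -\varepsilon \lambda_1$ slot directly into the inequality, and the explicit combinatorial factors $(n-1)(n-2)$ and $(n-1)$ emerge from the counts of relevant subsets. Since $l \leq k-1$, the remaining Garding inequalities $S_j(\lambda) > 0$ for $j \leq l+1$ are available whenever they are needed to close the argument.
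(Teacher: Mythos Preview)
Your reduction via $S_l(\lambda)=S_l(\lambda|1)+\lambda_1 S_{l-1}(\lambda|1)$ is fine, but the case split that follows is vacuous: since $\lambda\in\Gamma_k\subset\Gamma_l$ for every $l\le k$, Proposition~\ref{pro3} (inequality~(\ref{2.1})) already gives $S_{l-1}(\lambda|i)>0$ for all $i$, so $S_{l-1}(\lambda|1)>0$ always holds and your ``easy case'' never occurs. The entire content therefore sits in what you call the main case, and there the proposal is only a plan: the split on whether $|\lambda_n S_{l-2}(\hat\lambda)|$ dominates $S_{l-1}(\hat\lambda)$ is never carried out, and it is not clear how a descent to level $l-2$ would produce the stated constants. (Your worry that $\hat\lambda=\lambda|1n$ lies in no Garding cone is also misplaced: since $\lambda_n<0$ one has $\lambda|n\in\Gamma_k$, and applying~(\ref{2.1}) at each level $j\le k$ gives $S_{j-1}(\lambda|1n)>0$, i.e.\ $\hat\lambda\in\Gamma_{k-1}$.)

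The paper's argument is structurally different and supplies the missing idea. It first proves the pivot inequality
\[
(-\lambda_n)\,S_{l-1}(\lambda|1n)\;\ge\;\delta\varepsilon\,\frac{l}{n-1}\,S_l(\lambda),
\]
by applying~(\ref{2.3}) to $\lambda|n\in\Gamma_l$ (together with $\lambda_1\ge\delta\lambda_2$ and~(\ref{2.1}) when $\lambda_1<\lambda_2$) and then using $S_l(\lambda|n)\ge S_l(\lambda)$. The split is on a threshold parameter $\theta$: if $S_l(\lambda|1)\ge\theta(-\lambda_n)S_{l-1}(\lambda|1n)$ the pivot gives the bound directly; otherwise one goes \emph{up} one level via the Newton identity $(l+1)S_{l+1}(\lambda|1)=\sum_{i\ge 2}\lambda_i S_l(\lambda|1i)$, rewrites $S_l(\lambda|1i)=S_l(\lambda|1)-\lambda_i S_{l-1}(\lambda|1i)$, and with $\theta=\varepsilon\delta/(2(n-2))$ forces $S_{l+1}(\lambda|1)\le -c\,\lambda_1 S_l(\lambda)$ for an explicit $c>0$. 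Since $l+1\le k$ gives $S_{l+1}(\lambda)\ge 0$, one concludes via $S_l(\lambda|1)=\big(S_{l+1}(\lambda)-S_{l+1}(\lambda|1)\big)/\lambda_1\ge -S_{l+1}(\lambda|1)/\lambda_1$. This ``climb to $S_{l+1}$ and use $S_{l+1}(\lambda)\ge 0$'' is precisely what exploits the Garding hypothesis at level $l+1$ and is absent from your outline.
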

One can find a generalized inequality and the proof in \cite{cz}. For completeness we give a proof for our case as same as in \cite{mq}.
\begin{proof}
 For $l= 0$, (\ref{2.13}) holds directly. In the following, we assume $1 \leq l \leq k-1$.

 Firstly, if $\lambda_1\geq\lambda_2$, we have from (\ref{2.3})
 \begin{eqnarray}\label{2.15}
   \lambda_1 S_{l-1}(\lambda|1n)\geq\frac{l}{n-1}S_l(\lambda|n).
 \end{eqnarray}
 If $\lambda_1<\lambda_2$, use $\lambda_1\geq\delta\lambda_2$ and (\ref{2.1})to get
 \begin{eqnarray}\label{2.16}
   \lambda_1S_{l-1}(\lambda|1n)\geq\delta\lambda_2S_{l-1}(\lambda|2n)\geq\delta\frac{l}{n-1}S_l(\lambda|n).
 \end{eqnarray}
 It follows from (\ref{2.15}) and (\ref{2.16}) that
 \begin{eqnarray}\label{2.17}
   (-\lambda_n)S_{l-1}(\lambda|1n)\geq\delta\varepsilon\frac{l}{n-1}S_l(\lambda|n)\geq\delta\varepsilon\frac{l}{n-1}S_l(\lambda).
 \end{eqnarray}
 We use $S_l(\lambda)=S_l(\lambda|n)+\lambda_n S_{l-1}(\lambda|n)\leq S_l(\lambda|n)$, for $\lambda_n<0$, in the second inequality. Then we consider the following two cases.

 \textbf{Case1.} $S_l(\lambda|1)\geq \theta(-\lambda_n)S_{l-1}(\lambda|1n)$, $\theta$ is a small positive number to be determined.

Use (\ref{2.17}) directly to obtain
\begin{eqnarray}\label{2.18}
  S_l(\lambda|1)\geq\theta\delta\varepsilon\frac{l}{n-1}S_l(\lambda).
\end{eqnarray}

\textbf{Case2.} $S_l(\lambda|1)< \theta(-\lambda_n)S_{l-1}(\lambda|1n)$.

From proposition \ref{pro1} we have
\begin{eqnarray}
  (l+1)S_{l+1}(\lambda|1)&=&\sum_{i=2}^{n}\lambda_i S_l(\lambda|1i)=\sum_{i=2}^{n}\lambda_i\big[S_l(\lambda|1)-\lambda_i S_{l-1}(\lambda|1i)\big]\nonumber\\
  &=&\sum_{i=}^{n}\lambda_i S_l(\lambda|1)-\sum_{i=2}^{n}\lambda^2_i S_{l-1}(\lambda|1i)\nonumber\\
  &\leq&(n-2)\lambda_2S_l(\lambda|1)-\lambda_n^2S_{l-1}(\lambda|1n)\nonumber\\
  &\leq&(\frac{n-2}{\delta}\theta-\varepsilon)\lambda_1(-\lambda_n)S_{l-1}(\lambda|1n)
  =-\frac{\varepsilon}{2}\lambda_1(-\lambda_n)S_{l-1}(\lambda|n),
\end{eqnarray}
if we choose $\theta=\frac{\varepsilon\delta}{2(n-2)}$ in the last equality. From (\ref{2.17}), we have
\begin{eqnarray}
  S_{l+1}(\lambda|1)\leq-\frac{\varepsilon^2\delta m}{2(n-1)(l+1)}\lambda_1S_l(\lambda),
\end{eqnarray}
then
\begin{eqnarray}
S_l(\lambda|1)&=&\frac{S_{l+1}(\lambda)-S_{l+1}(\lambda|1)}{\lambda_1}\geq-\frac{S_{l+1}(\lambda|1)}{\lambda_1}\nonumber\\
&\geq&\frac{\varepsilon^2\delta}{2(l+1)}\frac{l}{n-1}S_l(\lambda)>\frac{\varepsilon^2\delta}{4(n-1)}S_l(\lambda).
\end{eqnarray}
Hence (\ref{2.13}) holds.
\end{proof}

\section{$C^{0}$ Estimate}\label{sec3}
Following the idea of Lions-Trudinger-Urbas \cite{ltu}, we prove the following theorem.
\begin{theorem}\label{th1}
  Let $\Omega\subset \mathbb{R}^{n}\ (n\geq3)$ be a bounded domain with $C^{1}$ boundary,
  and $\nu$ be the unit outer normal vector of $\partial \Omega$.
  Suppose that $u\in C^{2}(\overline{\Omega})\cap C^{3}(\Omega)$
  is an $k$-admissible solution of the following Neumann boundary problem,
  \begin{equation}\label{eq3}
   \left\{
    \begin{aligned}
    S_{k}(W)&=f(x),\quad\text{in}\ \ \Omega,\nonumber\\
     u_{\nu}&=-a(x)u+b(x),\quad\text{on}\ \ \partial\Omega.
    \end{aligned}
    \right.
\end{equation}
where $f>0$ and $a, b\in C^{3}(\partial\Omega)$ with $\inf\limits_{\partial \Omega}a(x)>\sigma$. Then
\begin{eqnarray}
  \sup_{\overline{\Omega}}|u|\leq \frac{C}{\sigma}
\end{eqnarray}
where $C$ depends on $k$, $n$, $a$, $b$, $f$ and $diam(\Omega)$.
\end{theorem}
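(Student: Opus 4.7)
The plan is to follow the Lions--Trudinger--Urbas strategy for Robin-type boundary conditions, using two ingredients: the strict subharmonicity of any $k$-admissible function and the coercivity ($a\geq\sigma>0$) of the boundary condition. The subharmonicity comes from formula \eqref{w1}: each $u_{ii}$ appears in exactly $\binom{n-1}{m-1}$ of the diagonal entries of $W$, so $S_{1}(W)=\binom{n-1}{m-1}\Delta u$, and $k$-admissibility forces $S_{1}(W)>0$; hence $\Delta u>0$ in $\Omega$.

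For the upper bound, the strong maximum principle places $\sup_{\overline{\Omega}}u$ at some $x_{0}\in\partial\Omega$. Hopf's lemma gives $u_{\nu}(x_{0})\geq 0$, and substituting into the boundary condition yields
\[
u(x_{0})\leq\frac{b(x_{0})}{a(x_{0})}\leq\frac{\sup_{\partial\Omega}|b|}{\sigma},
\]
and this bound propagates to all of $\overline{\Omega}$.

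For the lower bound, let $y_{0}\in\overline{\Omega}$ realize $\inf u$. If $y_{0}\in\partial\Omega$, the analogous Hopf argument gives $u_{\nu}(y_{0})\leq 0$, and the boundary condition delivers $u(y_{0})\geq b(y_{0})/a(y_{0})\geq -\sup|b|/\sigma$. The delicate case is an interior minimum $y_{0}\in\Omega$, which strict subharmonicity alone does not exclude (witness $c(|x|^{2}-1)$ on a ball for arbitrarily large $c>0$). To close this case I plan to upgrade $\Delta u>0$ to the quantitative estimate $\Delta u\geq c_{0}>0$ by applying Proposition \ref{pro4} to $\lambda(W)\in\Gamma_{k}$ with $S_{k}(W)=f\geq\inf_{\Omega}f>0$, and then to compare $u$ with an explicit Robin-compatible radial barrier of the form $\phi(x)=\alpha+\beta|x-x_{\ast}|^{2}$ chosen so that $\Delta\phi=c_{0}$ in $\Omega$ and $\phi_{\nu}+a\phi\leq b$ on $\partial\Omega$. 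The linear comparison principle for the Robin problem then forces $u\geq\phi\geq -C/\sigma$ with $C$ depending on $c_{0}$, $\sup|b|$, and $\mathrm{diam}(\Omega)$, which combined with the upper bound produces the desired $\sup_{\overline{\Omega}}|u|\leq C/\sigma$.

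The anticipated hard step is precisely this interior-minimum sub-case: every other part of the argument reduces to a single application of Hopf's lemma together with the sign assumption on the Robin coefficient, while the interior minimum is the only place where the coercivity $a\geq\sigma>0$ of the boundary condition has to be combined with the quantitative subharmonicity extracted from the nonlinear equation.
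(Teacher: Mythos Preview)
Your upper-bound argument is correct and matches the paper's. The gap is in your lower-bound step: the linear Robin comparison you invoke runs in the wrong direction. With $\Delta\phi=c_{0}$ and $\Delta u\geq c_{0}$, the difference $v=u-\phi$ satisfies $\Delta v\geq 0$ in $\Omega$ and $v_{\nu}+av\geq 0$ on $\partial\Omega$, but this combination does \emph{not} force $v\geq 0$. Concretely, on the unit ball with $a\equiv 1$ the function $v(x)=|x|^{2}-2$ satisfies $\Delta v=2n>0$ and $v_{\nu}+v=1>0$ on $\partial B_{1}$, yet $v(0)=-2<0$. The point is that $\Delta u\geq c_{0}$ is a one-sided \emph{lower} bound on $\Delta u$, and subharmonicity controls maxima, not minima; to bound $u$ from below by linear comparison you would need an \emph{upper} bound on $\Delta u$, which Newton--Maclaurin does not supply.

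The paper instead exploits the full nonlinear equation, which does carry the missing upper bound. Set $w=u-A|x|^{2}$ with $A$ chosen so large (depending on $n,m,k,\sup f$) that $S_{k}\bigl(W[A|x|^{2}]\bigr)>\sup_{\Omega}f$. At any interior minimum of $w$ one has $D^{2}u\geq 2AI$; since the map $D^{2}u\mapsto W$ is linear and sends positive-semidefinite matrices to positive-semidefinite matrices, this gives $W[u]\geq W[A|x|^{2}]$, and monotonicity of $S_{k}$ on $\Gamma_{k}$ then yields $f=S_{k}(W[u])\geq S_{k}\bigl(W[A|x|^{2}]\bigr)>\sup f$, a contradiction. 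Hence $\min_{\overline{\Omega}}w$ lies on $\partial\Omega$, and from there your own boundary-minimum argument (Hopf plus the Robin condition) finishes the estimate. The structural difference from your plan is that it is the bound $S_{k}(W)=f\leq\sup f$ that rules out a deep interior well, not the Laplacian lower bound.
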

\begin{proof}
 Because $f>0$, the comparison principle tells us that $u$ attains its maximum on the boundary. At the maximum point $x_{0}\in\partial\Omega$ we have
 \begin{eqnarray}
   0\leq u_{\nu}(x_{0})=(-au+b)(x_{0}).\nonumber
 \end{eqnarray}
 It implies that
 \begin{eqnarray}
   u(x)\leq u(x_{0})\leq\frac{\sup\limits_{\partial\Omega} b}{\inf\limits_{\partial\Omega}a}.
 \end{eqnarray}

 Assume $0\in\Omega$ and let $w=u-A|x|^{2}$. We obtain
 \begin{eqnarray}
   F[A|x|^{2}]\geq f=F[u],\nonumber
 \end{eqnarray}
 if we choose $A$ large enough depends on k, n and $\sup f$. Similarly $w$ attains its minimum on the boundary by comparison principle. At the minimum point $x_{1}\in\partial\Omega$ we have
 \begin{eqnarray}
   0\geq w_{\nu}(x_{1})=(-au+b)(x_{1})-2Ax_{0}\cdot\nu.\nonumber
 \end{eqnarray}
 We use $w(x)\geq w_(x_{1})$ to get
 \begin{eqnarray}
   u(x)\geq-\frac{|\inf\limits_{\partial\Omega} b-2AL(L+1)|}{\sup\limits_{\partial\Omega} a}\geq-\frac{|\inf\limits_{\partial\Omega} b-2AL(L+1)|}{\inf\limits_{\partial\Omega} a},
 \end{eqnarray}
 where $L=diam(\Omega)$. Then we complete the proof of Theorem \ref{th1}.
\end{proof}

\section{Global gradient estimate}\label{sec4}

Throughout the rest of this paper, we always admit the Einstein's summation convention. All repeated indices come from 1 to n.
We will denote $F(D^2u)=S_k(W)$ and
\begin{eqnarray*}
  F^{ij}=\frac{\partial F(D^2u)}{\partial u_{ij}}=\frac{\partial S_{k}(W)}{\partial w_{\overline{\alpha}\overline{\beta}}}
\frac{\partial w_{\overline{\alpha}\overline{\beta}}}{\partial u_{ij}}.
\end{eqnarray*}
From (\ref{w1}) and (\ref{sw1}) we have, for any $1\leq j\leq n$,
\begin{eqnarray}
F^{ii}=\sum_{i\in\overline{\alpha}}\frac{\partial S_k(W)}{\partial w_{\overline{\alpha}\overline{\alpha}}}.
\end{eqnarray}
Throughout the rest of the paper, we will denote $\mathcal{F}=\sum\limits_{i=1}^{n}F^{ii}=m\sum\limits_{N_{\overline{\alpha}}=1}^{C_n^m}S_{k-1}(W|N_{\overline{\alpha}})$ for simplicity.

\subsection{Interior gradient estimate}
Chou-Wang \cite{cw} gave the interior gradient estimates for $k$-Hessian equations. In a similar way, we will prove the following theorem.
\begin{theorem}\label{th2}
  Let $\Omega\subset \mathbb{R}^n\ (n\geq3)$ be a bounded domain and $2\leq k\leq  \frac{n-m}{n}C_n^m$. Suppose that $u\in C^{3}(\Omega)$
  is a k-admissible solution of the following equation,
  \begin{equation}\label{eq4}
    S_{k}(W)=f(x,u,Du),\quad\text{in}\ \ \Omega,
\end{equation}
where  $f(x,z,p)\in C^1(\overline{\Omega}\times [-M_0,M_0]\times\mathbb{R}^n)$ is a nonnegative function,  $M_0=\sup_{\overline{\Omega}}|u|$. We also assume that
\begin{eqnarray}
  |f|_{C^0}+\sum_{i=1}^{n}|f_{x_i}|_{C^0}+|f_z|_{C^0}+\sum_{i=1}^{n}|f_{p_i}|_{C^0}|Du|_{C^0}\leq L_1(1+|Du|_{C^0}^{2k-1}),\label{4.3}
\end{eqnarray}
for some constant $L_1$ independent of $|Du|_{C^0}$.
For any $B_r(y)\subset \Omega$, we have
\begin{eqnarray}
  \sup_{B_{\frac{r}{2}}(y)}|Du|\leq C_1+C_2\frac{M_0}{r},\label{4.4}
\end{eqnarray}
where $C_{1}$ depends only on $M_0$, $L_1$, $n$, $m$, and $k$, and $C_{2}$ depends only on $L_1$, $n$, $m$, and $k$. Moreover, if $f\equiv \text{constant}$, then $C_1=0$.
\end{theorem}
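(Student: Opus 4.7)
The plan is to adapt the test-function argument of Chou--Wang \cite{cw} for the $k$-Hessian equation to the present operator $S_k(W)$, with Proposition \ref{pro7} supplying the crucial lower bound on $\mathcal{F}$. Take a standard cutoff $\eta\in C_0^1(B_r(y))$ with $\eta\equiv 1$ on $B_{r/2}(y)$, $|D\eta|\le C/r$ and $|D^2\eta|\le C/r^{2}$, and consider the auxiliary function
\[
\Phi(x)=\eta(x)^{\beta}\, e^{A(M_0-u(x))}\,|Du(x)|^{2},
\]
with positive constants $A,\beta$ to be chosen large depending on $L_1,n,m,k$. Suppose $\Phi$ attains its maximum on $\overline{B_r(y)}$ at an interior point $x_0$ with $L:=|Du(x_0)|$ large compared to $1+M_0/r$ (otherwise the bound is already at hand), and rotate coordinates so that $u_1(x_0)=L$, $u_j(x_0)=0$ for $j\ge 2$, and the lower block $(u_{ij}(x_0))_{2\le i,j\le n}$ is diagonal.

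The first-order condition $(\log\Phi)_i(x_0)=0$ pins down $u_{11}(x_0)\approx AL^{2}/2$ and $u_{1j}(x_0)=O(L/r)$ for $j\ge 2$. The second-order condition $F^{ij}(\log\Phi)_{ij}(x_0)\le 0$, contracted with the linearization and combined with the once-differentiated equation $F^{ij}u_{ijk}=f_{x_k}+f_z u_k+f_{p_l}u_{lk}$ and the structural bound (\ref{4.3}), produces after routine bookkeeping a master inequality schematically of the form
\[
F^{ij}u_{ki}u_{kj}\;\le\;C\,\frac{\beta^{2}+A\beta}{r^{2}}\,\mathcal{F}\,L^{2}+C A\,L_{1}\,\bigl(L^{2}+L^{2k+1}\bigr)+C A k f\, L^{2},
\]
after the first-order condition is used to eliminate cross terms coming from differentiating $|Du|^{2}$.

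The heart of the argument is to produce enough coercivity on the left to beat the $L^{2k+1}$ term on the right, and the main obstacle is that without a priori information on the signs of the Hessian eigenvalues one cannot directly bound $F^{11}$ or $\mathcal{F}$ from below. I therefore split on the minimum eigenvalue $\mu_n$ of $D^2u(x_0)$. In the case $\mu_n\le-\delta L$ for a small fixed $\delta$, Proposition \ref{pro7}(\ref{p71}) gives $\mathcal{F}\ge c\theta_1 L^{k-1}$ and (\ref{p72}) further yields $F^{11}\ge c\theta_2 L^{k-1}$; since $F^{ij}u_{ki}u_{kj}\ge F^{11}u_{11}^{2}\ge cA^{2}L^{k+3}$, the left side beats the right once $A$ is chosen large depending on $L_1,n,m,k$, forcing $L\le C(1+M_0/r)$. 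In the complementary case $\mu_n>-\delta L$, the relations (\ref{w1})--(\ref{w3}) combined with $u_{11}\approx AL^{2}/2$ show that $W$ is approximately block-diagonal with a $C_{n-1}^{m-1}\times C_{n-1}^{m-1}$ block whose diagonal entries are all of order $AL^{2}$; Newton--MacLaurin (Proposition \ref{pro4}) together with $S_k(W)=f\le L_1 L^{2k-1}$ then forces $L$ to be bounded, again yielding $L\le C(1+M_0/r)$.

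Combining the two cases gives $L(x_0)\le C_1+C_2 M_0/r$, hence $\Phi\le C(1+M_0/r)^{2}$ throughout $B_r(y)$ and $|Du|\le C_1+C_2 M_0/r$ on $B_{r/2}(y)$. If $f$ is a positive constant, then $f_{x_k}$, $f_z$ and $f_{p_l}$ all vanish, so the $L_1$-dependent terms contributing to the additive constant $C_1$ drop out of the master inequality, yielding $|Du|\le C_2 M_0/r$ and thus $C_1=0$. The restriction $k\le \frac{n-m}{n}C_n^m$ is used precisely to make Proposition \ref{pro7} applicable in the first case.
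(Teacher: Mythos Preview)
Your auxiliary function has the wrong monotonicity in $u$. With $\Phi=\eta^\beta e^{A(M_0-u)}|Du|^2$ the first-order condition indeed gives $u_{11}\approx +\tfrac{A}{2}L^2>0$, but the engine of the whole argument is Proposition~\ref{pro7}, whose hypothesis is that the \emph{smallest} Hessian eigenvalue is large and negative. A large positive $u_{11}$ supplies no such information, and the case split you introduce to compensate fails in both branches. In Case~1 you invoke (\ref{p72}), but that estimate carries the side hypothesis $\lambda_i\le mL$ for all $i$, whereas here $\lambda_1\ge u_{11}\approx \tfrac{A}{2}L^2\gg mL$, so (\ref{p72}) is simply not applicable; and even granting the claimed $F^{11}u_{11}^2\ge cA^2L^{k+3}$, this cannot dominate the $L^{2k+1}$ source term for any $k>2$, since $A$ is fixed and $k+3<2k+1$. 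In Case~2, Newton--Maclaurin (\ref{2.7}) bounds $S_k$ from \emph{above} by lower $S_l$'s, not from below, so it cannot force $L$ small from $S_k(W)\le L_1L^{2k-1}$. Your block picture yields only $C_{n-1}^{m-1}$ eigenvalues of $W$ of order $AL^2$; when $k>C_{n-1}^{m-1}$ (which the theorem allows, e.g.\ $n=5$, $m=2$, $k=5$) the remaining $C_{n-1}^{m}$ eigenvalues can be taken near $0$ so that $S_k(W)$ stays small with $L$ large, and no contradiction arises.

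The paper takes the opposite sign: it uses $G=\rho\,(4M_0-u)^{-1/2}|Du|^2$, whose $u$-factor is \emph{increasing}. Diagonalizing the full Hessian at the interior maximum, the first-order condition then forces some diagonal entry, hence $u_{nn}$, to satisfy $u_{nn}\le -c|Du|^2$. Proposition~\ref{pro7}(\ref{p71}) now applies directly at scale $L=|Du|^2$, giving $\mathcal{F}\ge c|Du|^{2k-2}$, and since $F^{nn}\ge \tfrac1n\mathcal{F}$ one obtains $F^{nn}u_{nn}^2\ge c|Du|^{2k+2}$, which dominates the $|Du|^{2k+1}$ term coming from (\ref{4.3}) and closes the estimate with no case split. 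Replacing your $e^{A(M_0-u)}$ by a factor increasing in $u$ (for instance $e^{A(u-M_0)}$, or the paper's $(4M_0-u)^{-1/2}$) flips the sign of $u_{11}$ and puts you back on this track.
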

\begin{proof}
Assume $y=0\in\Omega$ and $B_r(0)\subset\Omega$. Choose the auxiliary function as
\begin{eqnarray}
G(x)=\rho (x)\varphi(u)|Du|^{2},
\end{eqnarray}
where $\rho(x)=(1-\frac{x^{2}}{r^{2}})^{2}$ such that $|D\rho|\leq b_0\rho^{\frac{1}{2}}$ and $|\nabla^{2}\rho| \leq b_0^2$, with $b_0=\frac{4}{r}$, and $\varphi(u)=(M-u)^{-\frac{1}{2}}$ with $M=4M_0$. It is easy to see that
\begin{eqnarray}\label{2.4}
\varphi''-\frac{2(\varphi')^{2}}{\varphi} \geq \frac{1}{16}M^{-\frac{5}{2}}.
\end{eqnarray}

Suppose $G$ attains its maximum at the point $x_{0}\in\Omega=B_{r}(0)$. In the following, all the calculations are at $x_{0}$. First, we have
\begin{eqnarray}
0=G_{i}(x_{0})=\rho_{i}\varphi|Du|^{2}+\rho u_{i}\varphi'|Du|^{2}+2\rho\varphi u_{k}u_{ki},\quad i=1,\cdot\cdot\cdot,n.\nonumber
\end{eqnarray}
After a rotation of the coordinates, we may assume that the matrix $(u_{ij})_{n\times n}$ is diagonal at $x_0$, so are $W$ and $(F^{ij})_{n\times n}$. The above identity can be rewrote as
\begin{eqnarray}\label{2.6}
  u_{i}u_{ii}=-\frac{1}{2\rho\varphi}(\varphi\rho_{i}+\rho\varphi'u_{i})|Du|^{2},\quad
  i=1,\cdot\cdot\cdot,n.
\end{eqnarray}
We also have
\begin{eqnarray}
  G_{ij}(x_{0})&=&2\rho\varphi u_{k}u_{kij}+2\rho\varphi u_{ki}u_{kj}+2\rho\varphi'(u_{i}u_{k}u_{kj}+u_{j}u_{k}u_{ki})\\ \nonumber
  &&+2\varphi(\rho_{i}u_{k}u_{kj}+\rho_{j}u_{k}u_{ki})+\rho u_{ij}\varphi'|Du|^{2}+\rho\varphi''|Du|^{2}u_{i}u_{j}\\ \nonumber
  &&+\varphi''|Du|^{2}(\rho_{i}u_{j}+\rho_{j}u_{i})+\rho_{ij}\varphi|Du|^{2}.\nonumber
\end{eqnarray}
Use the maximum principle to get
\begin{eqnarray}
  0&\geq& F^{ij}G_{ij}=F^{ii}G_{ii}\\
&=& 2\rho\varphi u_{k}F^{ii}u_{iik}+2\rho\varphi F^{ii}u_{ii}^{2}+4\rho\varphi'F^{ii}u_{i}^{2}u_{ii}+4\varphi F^{ii}\rho_{i}u_{i}u_{ii}\nonumber\\
 &&+ \rho\varphi'|Du|^{2}F^{ii}u_{ii}+\rho\varphi''|Du|^{2}F^{ii}u_{i}^{2}+2\varphi'|Du|^{2}F^{ii}\rho_{i}u_{i}
  +F^{ii}\rho_{ii}\varphi|Du|^{2}.\nonumber
\end{eqnarray}
From the facts that
\begin{eqnarray}\label{2.9}
  F^{ii}u_{ii}=kf,\quad \quad F^{ii}u_{iil}=f_{x_l}+f_{z}u_{l}+f_{p_l}u_{ll},
\end{eqnarray}
we have
\begin{eqnarray}
  0&\geq& 2\rho\varphi u_{l}(f_{l}+f_{z}u_{l})+2\rho\varphi f_{p_l}u_{l}u_{ll}+2\rho\varphi F^{ii}u_{ii}^{2}\nonumber\\
 &&+4\rho\varphi'F^{ii}u_{i}^{2}u_{ii}+4\varphi F^{ii}\rho_{i}u_{i}u_{ii}+ mf\rho\varphi'|Du|^{2}+\rho\varphi''|Du|^{2}F^{ii}u_{i}^{2}\nonumber\\
 &&+2\varphi'|Du|^{2}F^{ii}\rho_{i}u_{i}+F^{ii}\rho_{ii}\varphi|Du|^{2}.\nonumber
\end{eqnarray}
Assume $|Du|(x_0)\geq b_0$, otherwise we have (\ref{4.4}). By (\ref{4.3}) and (\ref{2.6}), which used to deal with the second, fourth and fifth terms, then
\begin{eqnarray}
  0&\geq&-4L_1(\varphi+\varphi')|Du|^{2k+1}+2\rho\varphi F^{ii}u_{ii}^{2}-2\varphi'|Du|^{2}F^{ii}u_{i}\rho_{i}-\frac{2\varphi|Du|^{2}}{\rho}F^{ii}\rho_{i}^{2}\nonumber \\
  &&+(\varphi''-\frac{2\varphi'^{2}}{\varphi})\rho |Du|^{2}F^{ii}u_{i}^{2}+\varphi|Du|^{2}F^{ii}\rho_{ii}.\nonumber
\end{eqnarray}
By (\ref{2.4}) and properties of $\rho$ we have
\begin{eqnarray}\label{2.12}
  0&\geq&2\rho\varphi F^{ii}u_{ii}^{2} -2b_{0}\varphi'\rho^{\frac{1}{2}}|Du|^{3}\mathcal{F}
  -3b_{0}^2\varphi|Du|^{2}\mathcal{F}\nonumber\\
  && -4L_1(\varphi+\varphi')|Du|^{2k+1}.
\end{eqnarray}

Assume $G(x_{0})\geq 20nb_{0}^{2}M^{\frac{3}{2}}$, otherwise we have (\ref{4.4}), which implies that $|Du|\geq \frac{2\sqrt{5n}b_{0}M^{\frac{3}{4}}}{\rho^{\frac{1}{2}}\varphi^{\frac{1}{2}}}$ at $x_{0}$. There exists at least one index $i_{0}$ such that $|u_{i_{0}}| \geq \frac{|Du|}{\sqrt{n}}$. By (\ref{2.6}), it is not hard to get
\begin{eqnarray}
  u_{i_{0}i_{0}}&=&-(\frac{\varphi'}{2\varphi}+\frac{\rho_{i_{0}}}{2\rho u_{i_{0}}})|Du|^{2} \nonumber\\
  &\leq& -(\frac{\varphi'}{2\varphi}-\frac{1}{20M})|Du|^{2}  \nonumber\\
   &\leq& -\frac{\varphi'}{4\varphi}|Du|^{2}.\label{4.11}
\end{eqnarray}

Let $u_{11}\geq\cdot\cdot\cdot\geq u_{nn}$, from (\ref{2.1}) and (\ref{4.11}) we have
\begin{eqnarray}\label{2.14}
  u_{nn}\leq-\frac{\varphi'|Du|^{2}}{4\varphi},
  \quad F^{11}\leq\cdot\cdot\cdot\leq F^{nn}.
  \label{4.12}
\end{eqnarray}
The second part implies that $F^{nn}\geq\frac{1}{n}\mathcal{F}$.
Returning to (\ref{2.12}) we have
\begin{eqnarray}
  0&\geq&2\rho\varphi F^{nn}u_{nn}^{2} -2b_{0}\varphi'\rho^{\frac{1}{2}}|Du|^{3}\mathcal{F}
  -3b^2_{0}\varphi|Du|^{2}\mathcal{F}-4L_1(\varphi+\varphi')|Du|^{2k+1}\nonumber
  \\
  &\geq& \frac{\rho\varphi'^{2}}{8n\varphi}|Du|^{4}\mathcal{F}-2b_{0}\varphi'\rho^{\frac{1}{2}}|Du|^{3}\mathcal{F}
  -3b^2_{0}\varphi|Du|^{2}\mathcal{F}-4L_1(\varphi+\varphi')|Du|^{2k+1}.\label{4.13}
\end{eqnarray}
Both sides of (\ref{4.13}) multiplied by $\rho\varphi^{3}$, then we have
\begin{eqnarray}
 0&\geq& (\frac{2G^{2}}{125nM^{3}}-\frac{8b_{0}G^{\frac{3}{2}}}{3M^{\frac{9}{4}}}-\frac{6b^2_{0}G}{M^{\frac{3}{2}}})\mathcal{F}
 -4L_1(\frac{1}{M^{\frac{5}{4}}}+\frac{1}{M^{\frac{9}{4}}})|Du|^{2k-2}G^{\frac{3}{2}}.\label{4.14}
\end{eqnarray}
By (\ref{4.12}), we can choose $\delta=\frac{\varphi'}{4\varphi}$, $L=|Du|^2$ and $\theta_1=(\frac{(\varphi')^{k}}{(C_n^m)!(16\varphi)^{k}})^{k-1}$ in the Proposition \ref{pro7}, such that
\begin{eqnarray}
  \mathcal{F}\geq \theta_1|Du|^{2k-2}.\nonumber
\end{eqnarray}
Then,
\begin{eqnarray}
  0\geq \frac{2G}{125nM^{3}}-\frac{8b_{0}G^{\frac{1}{2}}}{3M^{\frac{9}{4}}}-\frac{6b^2_{0}}{M^{\frac{3}{2}}}
  -4\theta_1^{-1}L_1(\frac{1}{M^{\frac{5}{4}}}+\frac{1}{M^{\frac{9}{4}}})G^{\frac{1}{2}}.\nonumber
\end{eqnarray}
It follows that
\begin{eqnarray}
  G^{\frac{1}{2}}(x_0)\leq C_1+C_2\frac{M^{\frac{3}{4}}}{r}.\nonumber
\end{eqnarray}
Thus
\begin{eqnarray}
  \sup_{B_{\frac{r}{2}}}|Du|\leq C_1+C_2\frac{M}{r},
\end{eqnarray}
where $C_{1}$ depends only on $M$, $L_1$, $n$, $m$, and $k$, and $C_{2}$ depends only on $L_1$, $n$, $m$, and $k$.
It is not hard to see that $C_1=0$ when $f\equiv \text{constant}$.
\end{proof}

In fact, if we only consider for $f=f(x,u)>0$ in the equation (\ref{eq4}), we could remove the restriction to $k$ in Theorem \ref{th2}
 and the following Theorem \ref{th4.1}. Precisely, we have
\begin{theorem}\label{th3}
  Let $\Omega\subset \mathbb{R}^n$ be a bounded domain and $2\leq k\leq C_n^m$. Suppose that $u\in C^{3}(\Omega)$
  is a k-admissible solution of the following equation,
  \begin{equation}\label{eq7}
    S_{k}(W)=f(x,u),\quad\text{in}\ \ \Omega,
\end{equation}
where  $f(x,z)\in C^1(\overline{\Omega}\times [-M_0,M_0]\times\mathbb{R}^n)$ is a positive function,  $M_0=\sup_{\overline{\Omega}}|u|$. We also assume that
\begin{eqnarray}
  |f|_{C^1(\overline{\Omega}\times [-M_0,M_0]\times\mathbb{R}^n)}\leq L_1,\label{4.5}
\end{eqnarray}
for some constant $L_1$.
For any $B_r(y)\subset \Omega$, we have
\begin{eqnarray}
  \sup_{B_{\frac{r}{2}}(y)}|Du|\leq C_1+C_2\frac{M_0}{r},\label{4.6}
\end{eqnarray}
where $C_{1}$ depends only on $M_0$, $L_1$, $\min{f}$, $n$, $m$, and $k$, and $C_{2}$ depends only on $L_1$, $\min{f}$, $n$, $m$, and $k$. Moreover, if $f\equiv \text{constant}$, then $C_1=0$.
\end{theorem}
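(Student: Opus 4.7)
The plan is to adapt the proof of Theorem~\ref{th2} with the same auxiliary function $G(x)=\rho(x)\varphi(u)|Du|^{2}$, where $\rho=(1-|x|^{2}/r^{2})^{2}$ and $\varphi(u)=(M-u)^{-1/2}$ with $M=4M_{0}$, and to exploit two simplifications afforded by the hypotheses of Theorem~\ref{th3} that together remove the restriction $k\le\frac{n-m}{n}C_{n}^{m}$.

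First, since $f=f(x,u)$ has no $Du$-dependence, differentiating the equation gives $F^{ii}u_{iil}=f_{x_{l}}+f_{z}u_{l}$ with no term of the form $f_{p_{l}}u_{ll}$. In the computation of $F^{ij}G_{ij}\le 0$ at an interior maximum $x_{0}$ of $G$, the error term that previously appeared as $-4L_{1}(\varphi+\varphi')|Du|^{2k+1}$ in~(\ref{2.12}) is now replaced by the much tamer $-CL_{1}\rho(\varphi+\varphi')|Du|^{2}$, where $C=C(n,m,k)$. Diagonalizing $(u_{ij})$ and using the critical relation~(\ref{2.6}) as before yields
\begin{equation*}
0\ge 2\rho\varphi F^{ii}u_{ii}^{2}-2b_{0}\varphi'\rho^{1/2}|Du|^{3}\mathcal{F}-3b_{0}^{2}\varphi|Du|^{2}\mathcal{F}-CL_{1}\rho(\varphi+\varphi')|Du|^{2}.
\end{equation*}

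Second, the hypothesis $f\ge c_{f}:=\min f>0$ combined with the Newton--Maclaurin inequality~(\ref{2.7}) applied to $\lambda(W)\in\Gamma_{k}$ yields $S_{k-1}(W)\ge C(n,m,k)\,c_{f}^{(k-1)/k}$, and since $\mathcal{F}=m(C_{n}^{m}-k+1)S_{k-1}(W)$ by Proposition~\ref{pro1}, we obtain a uniform positive lower bound $\mathcal{F}\ge c_{*}>0$ depending only on $n,m,k,c_{f}$. This bound replaces the role played by Proposition~\ref{pro7} in Theorem~\ref{th2}, whose applicability was the sole source of the cone-range restriction on $k$.

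The remaining steps then proceed verbatim: assuming $G(x_{0})\ge 20nb_{0}^{2}M^{3/2}$ (otherwise~(\ref{4.6}) already holds), one picks $i_{0}$ with $|u_{i_{0}}|\ge|Du|/\sqrt{n}$, derives $u_{nn}\le -\varphi'|Du|^{2}/(4\varphi)$ and $F^{nn}\ge\mathcal{F}/n$, and bounds $2\rho\varphi F^{ii}u_{ii}^{2}\ge\frac{\rho(\varphi')^{2}}{8n\varphi}|Du|^{4}\mathcal{F}$. Multiplying the displayed inequality through by $\rho\varphi^{3}$, dividing by the positive quantity $\mathcal{F}\ge c_{*}$, and collecting in powers of $G^{1/2}$ produces $G(x_{0})\le C_{1}+C_{2}M^{3/2}/r^{2}$, which gives the asserted bound $\sup_{B_{r/2}}|Du|\le C_{1}+C_{2}M_{0}/r$. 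When $f$ is constant, $f_{x}=f_{z}=0$ kills the $L_{1}$-error entirely, so $C_{1}=0$. The key observation, and the only conceptual obstacle, is that a positive \emph{constant} lower bound on $\mathcal{F}$, obtained from $f>0$ via Newton--Maclaurin, suffices once the $Du$-dependence of $f$ is removed; there is no longer any need to force $\mathcal{F}$ to grow like $|Du|^{2k-2}$ as was required in Theorem~\ref{th2}.
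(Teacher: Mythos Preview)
Your proposal is correct and follows essentially the same route as the paper's own proof: both keep the auxiliary function $G=\rho\varphi|Du|^{2}$ from Theorem~\ref{th2}, observe that dropping the $Du$-dependence of $f$ reduces the error term from order $|Du|^{2k+1}$ to order $|Du|^{2}$, and then replace the invocation of Proposition~\ref{pro7} by the Newton--Maclaurin lower bound $\mathcal{F}\ge c\,(\min f)^{1/k}$ to close the estimate. The paper records the resulting inequality as a modified version of (\ref{4.14}) with the last term linear in $G$ rather than $|Du|^{2k-2}G^{3/2}$, which is exactly the simplification you describe.
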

\begin{proof}
The proof of this result is essentially the same as the proof of Theorem \ref{th2}, the only difference being that we cannot apply Proposition \ref{pro7} to give a lower bound to $\mathcal{F}$. Instead, we use the Newton-Maclaurin inequality. From (\ref{4.11}) we still have
\begin{eqnarray}
  u_{nn}\leq-\frac{\varphi'|Du|^{2}}{4\varphi},\quad F^{11}\leq\cdot\cdot\cdot\leq F^{nn}.
\end{eqnarray}
The second part implies that
\begin{eqnarray}
  F^{nn}&\geq&\frac{1}{n}\mathcal{F}=\frac{m}{n}\sum_{i=1}^{C_n^m}S_{k-1}(\lambda|i)\nonumber\\
  &=&\frac{m(C_n^m-k+1)}{n}S_{k-1}(\lambda).\nonumber
\end{eqnarray}
By the Newton-Maclaurin inequality, we have
\begin{eqnarray}
  F^{nn}\geq \frac{1}{n}\mathcal{F}\geq c S_k^{\frac{1}{k}}(\lambda)\geq c(\min{f})^{\frac{1}{k}},\label{4.18}
\end{eqnarray}
where $c=c(n,m,k)$ a universal constant. It is not hard to see, a different version of (\ref{4.14}), that
\begin{eqnarray}
 0&\geq& (\frac{2G^{2}}{125nM^{3}}-\frac{8b_{0}G^{\frac{3}{2}}}{3M^{\frac{9}{4}}}-\frac{6b^2_{0}G}{M^{\frac{3}{2}}})\mathcal{F}
 -4L_1(\frac{1}{M^{\frac{5}{4}}}+\frac{1}{M^{\frac{9}{4}}})G.\label{4.15}
\end{eqnarray}
Plug (\ref{4.18}) into(\ref{4.15}), then
\begin{eqnarray}
  0\geq \frac{2G}{125nM^{3}}-\frac{8b_{0}G^{\frac{1}{2}}}{3M^{\frac{9}{4}}}-\frac{6b^2_{0}}{M^{\frac{3}{2}}}
  -4c^{-1}(\min{f})^{-\frac{1}{k}}L_1(\frac{1}{M^{\frac{5}{4}}}+\frac{1}{M^{\frac{9}{4}}}).\nonumber
\end{eqnarray}
Thus we have
\begin{eqnarray}
  \sup_{B_{\frac{r}{2}}}|Du|\leq C_1+C_2\frac{M}{r},
\end{eqnarray}
where $C_{1}$ depends only on $M_0$, $L_1$, $\min{f}$, $n$, $m$, and $k$, and $C_{2}$ depends only on $L_1$, $\min{f}$, $n$, $m$, and $k$. It is not hard to see that $C_1=0$ when $f\equiv \text{constant}$.
\end{proof}

\subsection{Gradient estimate near boundary}
In this subsection, we will establish a gradient estimate in the small neighborhood near
boundary. We use a similar method as in Ma-Qiu \cite{mq} with minor changes. We define
\begin{eqnarray}
  d(x)=dist(x,\partial\Omega),\nonumber\\
  \Omega_{\mu}=\{x\in\Omega|\ d(x)<\mu\}.
\end{eqnarray}
 It is well known that there exists a small positive universal
constant $\mu_0$ such that $d(x)\in C^k(\Omega_{\mu}),\ \forall0<\mu\leq\mu_0$, provided $\partial\Omega\in C^k$.  As in Simon-Spruck \cite{ss} or Lieberman \cite{l}
(in page 331), we can extend $\nu$ by $\nu=-D d$ in $\Omega_{\mu}$ and note that $\nu$ is a $C^2(\overline{\Omega_{\mu}})$ vector field. As mentioned in the book \cite{l}, we also have the following formulas
\begin{eqnarray}
  |D\nu|+|D^2\nu|\leq C(n,\Omega),\quad\text{in}\ \Omega_{\mu},\nonumber\\
  \sum\limits_{i=1}^n\nu^iD_j\nu^i=\sum\limits_{i=1}^n\nu^iD_i\nu^j=\sum\limits_{i=1}^nd_id_{ij}=0,\ |\nu|=|Dd|=1,\quad\text{in}\ \Omega_{\mu}.\label{d}
\end{eqnarray}

\begin{theorem}\label{th4.1}
Suppose $\Omega\subset\mathbb{R}^n\ (n\geq3)$ is a bounded domain with $C^ 3$ boundary, and $2\leq k\leq  \frac{n-m}{n}C_n^m$,
Let $f(x,z,p)\in C^1(\overline{\Omega}\times [-M_0,M_0]\times\mathbb{R}^n)$ is a nonnegative function and
$\phi\in C^3(\overline{\Omega}\times [-M_0,M_0])$, $M_0=\sup_{\overline{\Omega}}|u|$.
We also assume that there exists constants $L_1$ (independent of $|Du|_{C^0}$) and $L_2$ such that
\begin{eqnarray}
|f|_{C^0}+\sum_{i=1}^{n}|f_{x_i}|_{C^0}+|f_z|_{C^0}+\sum_{i=1}^{n}|f_{p_i}|_{C^0}|Du|_{C^0}&\leq& L_1(1+|Du|_{C^0}^{2k-1}),\label{4.20}\\
|\phi|_{C^3(\overline{\Omega}\times [-M_0,M_0])}&\leq& L_2.\label{4.19}
\end{eqnarray}
If $u \in C ^3 (\Omega)\cap C^1 (\overline{\Omega})$ is a $k$-admissible solution of  equation
\begin{equation}
  \left\{
  \begin{aligned}
  S_{k}(W)&=f(x,u,Du),\quad\text{in}\ \Omega,\\
  u_{\nu}&=\phi(x,u),\quad\text{on}\ \partial\Omega.
  \end{aligned}
  \right.
\end{equation}
 Then we have
\begin{eqnarray}
\sup_{\Omega_{\mu}}|Du|\leq C,
\end{eqnarray}
where $C$ is a constant depends only on $n$, $k$, $m$, $\mu$, $M_0$, $L_1$, $L_2$ and $\Omega$.
\end{theorem}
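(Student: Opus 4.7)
The plan is to follow the Ma--Qiu strategy \cite{mq} for the $k$-Hessian Neumann problem, with the modifications needed to accommodate the $W$-matrix; the essential new ingredient is the bound $\mathcal{F} \geq \theta_1 |Du|^{2k-2}$ from Proposition \ref{pro7}, which substitutes for the $k$-Hessian analogue used in \cite{mq}. I would fix $\mu \leq \mu_0$ so that $d(x) = \mathrm{dist}(x,\partial\Omega) \in C^3(\overline{\Omega_\mu})$ and $\nu = -Dd$ is a $C^2$ extension of the outer normal satisfying (\ref{d}), and consider the auxiliary function
\begin{equation*}
 G(x) = \log|Du|^2 + \alpha u(x) + \beta d(x) \quad \text{on } \overline{\Omega_\mu},
\end{equation*}
with positive constants $\alpha, \beta$ to be chosen depending on $n,m,k,L_1,L_2,M_0,\Omega$. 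Let $x_0 \in \overline{\Omega_\mu}$ be a maximum point of $G$.

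If $x_0 \in \{d = \mu\}$, then in a ball of controlled radius around $x_0$ the interior estimate Theorem \ref{th2} bounds $|Du|$, hence $G(x_0)$, immediately. If $x_0 \in \partial\Omega$, Hopf's lemma requires $G_\nu(x_0) \geq 0$; tangentially differentiating $u_\nu = \phi(x,u)$ expresses $u_{\tau\nu}$ in terms of $\phi, \phi_z, u_\tau$ and the second fundamental form of $\partial\Omega$, which combined with (\ref{d}) and (\ref{4.19}) controls $\partial_\nu \log|Du|^2$ by a constant plus a linear term in $|Du|$. Choosing $\beta$ large enough, the term $\beta d_\nu = -\beta$ strictly dominates, forcing $G_\nu(x_0) < 0$ unless $|Du(x_0)|$ is already bounded, a contradiction.

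If $x_0$ lies in the interior of $\Omega_\mu$, I would argue as in the proof of Theorem \ref{th2}: rotate coordinates so that $(u_{ij})$, and hence $W$ and $F^{ij}$, is diagonal at $x_0$. The first-order condition $G_i(x_0) = 0$ yields $2 u_i u_{ii}/|Du|^2 = -\alpha u_i - \beta d_i$, and picking an index $i_0$ with $|u_{i_0}| \geq |Du|/\sqrt{n}$ gives $u_{i_0 i_0} \leq -c\alpha |Du|^2$ for $|Du|(x_0)$ large and $\alpha \gg \beta$. Ordering $u_{11} \geq \cdots \geq u_{nn}$ thus yields $u_{nn} \leq -c\alpha |Du|^2$, and Proposition \ref{pro7} gives $\mathcal{F} \geq \theta_1 |Du|^{2k-2}$. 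The inequality $F^{ii} G_{ii}(x_0) \leq 0$, together with $F^{ii}u_{ii} = kf$, $F^{ii}u_{iil} = f_{x_l} + f_z u_l + f_{p_l} u_{ll}$, and (\ref{4.20}), then produces a positive principal term of order $\mathcal{F}|Du|^2 \geq \theta_1 |Du|^{2k}$ against bad terms of order $|Du|^{2k-1} + \beta \mathcal{F}|Du|$, forcing $|Du(x_0)| \leq C$.

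The main obstacle will be the boundary case: the constants must be chosen so that the negative contribution from $\beta d_\nu = -\beta$ strictly dominates all tangential-derivative terms arising from the Neumann condition, including those involving the principal curvatures of $\partial\Omega$. This balancing is complicated by the fact that the same $\beta$ reappears in the interior case through $F^{ii}d_{ii}$, contributing a $\beta \mathcal{F}$ term, while one needs $\alpha \gg \beta$ in the interior computation to make $u_{nn}$ sufficiently negative. A careful two-parameter choice of $\alpha, \beta$ (and possibly a refinement of the form of $G$, along the lines of \cite{mq}) will be required to make all three cases close simultaneously.
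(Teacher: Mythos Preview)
Your three-case structure and the use of Proposition \ref{pro7} are exactly right, but the specific auxiliary function $G = \log|Du|^2 + \alpha u + \beta d$ fails in two places, and the fix is not merely a balancing of $\alpha,\beta$.

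\textbf{Boundary case.} When $x_0\in\partial\Omega$, writing $Du$ in the frame $\{e_1,\dots,e_{n-1},\nu\}$ gives
\[
\partial_\nu\log|Du|^2 \;=\; \frac{2}{|Du|^2}\Big(\sum_{i<n}u_i u_{i\nu}+u_\nu u_{\nu\nu}\Big).
\]
Tangentially differentiating $u_\nu=\phi(x,u)$ indeed controls $u_{i\nu}$ for $i<n$, but the last summand is $\phi(x,u)\,u_{\nu\nu}$, and $u_{\nu\nu}$ is completely uncontrolled at this stage. No choice of $\beta$ can absorb it. The paper (following \cite{mq}) avoids this by replacing $u$ with $w(x)=u(x)+\phi(x,u)\,d(x)$: on $\partial\Omega$ one has $Dw\cdot\nu=u_\nu-\phi=0$, so $|Dw|^2$ is purely tangential there and the computation of $\partial_\nu|Dw|^2$ in (\ref{3.22}) involves only first derivatives of $u$ and the geometry of $\partial\Omega$.

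\textbf{Interior case.} With $h(u)=\alpha u$ one has $h''=0$. Squaring the first-order condition $G_i=0$ produces
\[
\frac{4F^{ii}(u_\ell u_{\ell i})^2}{|Du|^4}=F^{ii}(\alpha u_i+\beta d_i)^2,
\]
whose leading part is $\alpha^2 F^{ii}u_i^2$, a negative contribution of order $\alpha^2|Du|^2\mathcal{F}$. Your good term $2F^{ii}u_{ii}^2/|Du|^2$ is bounded below only by $\tfrac{2}{n}F^{nn}u_{nn}^2/|Du|^2\sim \tfrac{\alpha^2}{8n}|Du|^2\mathcal{F}$, which is strictly smaller. The paper takes $h(u)=-\tfrac12\log(1+4M_0-u)$ precisely so that $h''-2h'^2=0$; then the $F^{ii}u_i^2$ terms cancel exactly (see (\ref{2.27})--(\ref{2.28})), and the $\tfrac{h'^2}{128n}|Du|^2\mathcal{F}$ term in (\ref{4.26}) survives to dominate the $|Du|^{2k-1}$ error via Proposition \ref{pro7}.

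So the ``refinement of $G$'' you allude to is not optional: you must use $\log|Dw|^2$ in place of $\log|Du|^2$ to close the boundary case, and a function $h$ with $h''-2h'^2\ge 0$ in place of $\alpha u$ to close the interior case.
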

\begin{proof}
Let
\begin{eqnarray}
  G(x):=\log|Dw|^{2}+h(u)+\alpha_{0}d(x),\quad \text{in}\ \Omega_{\mu}\ \forall0<\mu\leq\mu_0
\end{eqnarray}
where
\begin{eqnarray}
  w(x)&=&u(x)+\phi(x,u)d(x),\label{2.19} \\
  h(x)&=&-\frac{1}{2}\log(1+4M_0-u),\quad  h''-2h'^{2}=0,\label{4.25}
\end{eqnarray}
and $\alpha_{0}$ is a constant to be determined.

Above and throughout the text, we always denote $C$  a positive constant depends on some known data.

$\mathbf{Case 1}$: $G$ attains its maximum on the boundary $\partial\Omega$.

If we assume that $|Du|>8nL_{2}$ and $\mu\leq\frac{1}{2L_{2}}$, it follows from (\ref{2.20}) that
\begin{eqnarray}
  \frac{1}{4}|Du|\leq|Dw|\leq2|Du|.
\end{eqnarray}
 Assume $x_{0}$ is the maximum point of $G$, then we have
\begin{eqnarray}\label{3.21}
  0\leq G_{\nu}(x_{0})&=&\frac{D(|Dw|^{2})\cdot\nu}{|Dw|^{2}}+h'u_{\nu}+\alpha_{0}Dd\cdot\nu\nonumber\\
  &=&\frac{D(|Dw|^{2})\cdot\nu}{|Dw|^{2}}+h'\phi-\alpha_{0},
\end{eqnarray}
since $\nu=-Dd$.

On the boundary $\partial \Omega$, by the Neumann condition, we have
\begin{eqnarray}\label{3.22}
  D(|Dw|^{2})\cdot\nu&=&-w_{i}w_{ij}d_{j}\nonumber\\
  &=& -(u_{i}+\phi d_{i})(u_{ij}+D_{ij}\phi d+D_{i}\phi d_{j}+D_{j}\phi d_{i}+\phi d_{ij})d_{j}\nonumber\\
   &=& -(u_{i}+\phi d_{i})(D_{i}(u_{j}d_{j})-u_{j}d_{ij}+D_{i}\phi+D_{j}\phi d_{i}d_{j})\nonumber\\
  &=& (u_{i}+\phi d_{i})(u_{j}d_{ij}-\phi_{z}u_j d_i d_j-\phi_{x_j}d_{i}d_{j})\nonumber\\
  &\leq&  C(|Dw|^{2}+|Dw|).
\end{eqnarray}
where $C=C(|d|_{C^{2}},|\phi|_{C^{1}})$.
Plug (\ref{3.22}) into (\ref{3.21}) to get
\begin{eqnarray}
  0\leq G_{\nu}&\leq&C +\frac{C}{|Dw|}+h'|\phi|-\alpha_{0}\nonumber\\
  &\leq&-C+\frac{C}{|Dw|},
\end{eqnarray}
provided $\alpha_{0}=2C+\frac{2L_{2}}{1+M}+1$. Thus we have $|Dw|(x_{0})\leq 1$ , and $G(x_{0})\leq\alpha_{0}$.

$\mathbf{Case 2}$: $G$ attains its maximum on the interior boundary $\partial\Omega_{\mu}\cap \Omega$.
It follows from the interior gradient estimate (\ref{4.4}) that
\begin{eqnarray}
  \sup_{\partial\Omega_{\mu}\cap\Omega}|Dw|(x_{0})\leq C,
\end{eqnarray}
where $C$  depends only on $M$, $L_1$, $\mu$, $n$, $m$, and $k$. Thus we also have an upper bound for $G(x_{0})$.

$\mathbf{Case 3}$: $G$ attains its maximum at some point $x_{0}\in\Omega_{\mu}$.

We have
\begin{eqnarray}\label{2.20}
  w_{i}&=&(1+\phi_{z}d) u_{i}+R_{i},\\
  R_{i}&=&\phi_{i}d+\phi d_{i},\nonumber
\end{eqnarray}
and the second derivatives
\begin{eqnarray}\label{2.21}
  w_{ij}&=&(1+\phi_{z}d)u_{ij}+ R_{ij},
\end{eqnarray}
with
\begin{eqnarray}
R_{ij}&=&d\phi_{zz}u_{i}u_{j}+(d\phi_{iz}u_{j}+d\phi_{zj}u_{i}+d_{i}\phi_{z}u_{j}+d_{i}\phi_{z}u_{i})\\
    &&+(d\phi_{ij}+d_{i}\phi_{j}+d_{j}\phi_{i}+d_{ij}\phi).\nonumber
\end{eqnarray}
It is easy to see that
\begin{eqnarray}\label{2.23}
  |R_{i}|\leq 2L_{2},\quad|R_{ij}|\leq C(\mu|Du|^{2}+|Du|+1),
\end{eqnarray}
where $C=C(L_{2},n,|d|_{C^3})$. The third derivatives are more complicated,
\begin{eqnarray}\label{2.24}
 w_{ijl}&=&(1+\phi_{z}d)u_{ijl}+d\phi_{zzz}u_{i}u_{j}u_{l}+R_{ijl}\\
    &&+(d\phi_{zz}u_{j}u_{il}+d\phi_{zz}u_{i}u_{jl}+d\phi_{zz}u_{l}u_{ij})  \nonumber\\
    &&+(d\phi_{iz}u_{jl}+d\phi_{jz}u_{il}+d_{i}\phi_{z}u_{jl}+d_{j}\phi_{z}u_{il}+d\phi_{zl}u_{ij}+d_{l}\phi_{z}u_{ij}), \nonumber
\end{eqnarray}
where
\begin{eqnarray}
  R_{ijl}&=&(d\phi_{izz}u_{l}u_{j}+d\phi_{jzz}u_{l}u_{i}+d\phi_{zzl}u_{i}u_{j}+d_{i}\phi_{zz}u_{l}u_{j}+d_{j}\phi_{zz}u_{l}u_{i}
  +d_{l}\phi_{zz}u_{i}u_{j})\nonumber\\
   &&+(d\phi_{ijz}u_{l}+d\phi_{izl}u_{j}+d\phi_{jzl}u_{i}+d_{l}\phi_{zj}u_{i}+d_{l}\phi_{iz}u_{j}+d_{i}\phi_{jz}u_{l}
   +d_{i}\phi_{zl}u_{j}\nonumber\\
   &&+d_{j}\phi_{iz}u_{l}+d_{j}\phi_{zl}u_{i}+d_{ij}\phi_{z}u_{l}+d_{il}\phi_{z}u_{j}+d_{jl}\phi_{z}u_{i})\nonumber\\
   &&+(d\phi_{ijp}+d_{l}\phi_{ij}+d_{j}\phi_{il}+d_{i}\phi_{jl}+d_{ij}\phi_{l}+d_{jl}\phi_{i}+d_{il}\phi_{j}+d_{ijl}\phi).\nonumber
\end{eqnarray}
So we have $|R_{ijl}|\leq C(|Du|^{2}+|Du|+1)$ with $C=C(|d|_{C^{3}}, L_{2})$.

We compute at the maximum point $x_{0}\in\Omega_{\mu}$,
\begin{eqnarray}\label{2.25}
  0=G_{i}(x_{0})=\frac{2w_{l}w_{li}}{|Dw|^{2}}+\alpha_{0}d_{i}+h'u_{i},\quad i=1,\cdot\cdot\cdot,n,
\end{eqnarray}
and
\begin{eqnarray}
  G_{ij}(x_{0})=\frac{2w_{li}w_{lj}}{|Dw|^{2}}+\frac{2w_{l}w_{lij}}{|Dw|^{2}}-\frac{4w_{l}w_{li}w_{q}w_{qj}}{|Dw|^{4}}+\alpha_{0}d_{ij}+h''u_{i}u_{j}
  +h'u_{ij}.\nonumber
\end{eqnarray}
By the maximum principle we have
\begin{eqnarray}\label{2.26}
  0&\geq& F^{ij}G_{ij}=F^{ii}G_{ii}\\
  &=&\frac{2F^{ii}w_{li}^{2}}{|Dw|^{2}}+\frac{2w_{l}F^{ii}w_{iil}}{|Dw|^{2}}-\frac{4F^{ii}(w_{l}w_{li})^{2}}{|Dw|^{4}}+\alpha_{0}F^{ii}d_{ii}\nonumber\\
  &&+h''F^{ii}u_{i}^{2}+h'F^{ii}u_{ii}.\nonumber
\end{eqnarray}
The (\ref{2.25}) implies that $2w_{l}w_{li}=-(\alpha_{0}d_{i}+h'u_{i})|Dw|^{2}$, by the Cauchy-Schwartz inequality, then
\begin{eqnarray}\label{2.27}
  \frac{4F^{ii}(w_{l}w_{li})^{2}}{|Dw|^{4}}&=&\alpha_{0}F^{ii}d_{i}^{2}+2\alpha_{0}h'F^{ii}u_{i}d_{i}+h'^{2}F^{ii}u_{i}^{2}\nonumber\\
  &\leq&2h'^{2}F^{ii}u_{i}^{2}+C\mathcal{F},
\end{eqnarray}
where $C=C(\alpha_{0}, M, n, m, |d|_{C^{3}})$.
Combining (\ref{2.9}), (\ref{4.25}), (\ref{2.27}) with (\ref{2.26}), we get
\begin{eqnarray}\label{2.28}
  0&\geq&\frac{2F^{ii}w_{li}^{2}}{|Dw|^{2}}+\frac{2w_{l}F^{ii}w_{iil}}{|Dw|^{2}}-C\mathcal{F}.
\end{eqnarray}

We may assume that $\mu\leq \frac{1}{2L_{2}}$ and $|Du|(x_{0})\geq 16nL_{2}+1$, so that $\frac{1}{2}\leq1+\phi_{z}d\leq1$ and $\frac{1}{8}
 |Du|^{2}\leq |Dw|^{2}\leq\frac{3}{2}|Du|^{2}$. By (\ref{2.24}), we have
\begin{eqnarray}
  \frac{2w_{l}F^{ii}w_{iil}}{|Dw|^{2}}&=&\frac{1}{|Dw|^{2}}\big(2(1+\phi_{z}d)w_{i}D_{i}f+2d\phi_{zzz}w_{l}u_{l}F^{ii}u_{i}^{2}+
  4d\phi_{zz}F^{ii}u_{ii}u_{i}w_{i}\nonumber\\
  &&+(2d\phi_{zz}w_{l}u_{l}+2d\phi_{zl}w_{l}+\phi_{z}d_{l}w_{l})F^{ii}u_{ii}\nonumber\\
  &&+4d\phi_{iz}F^{ii}u_{ii}w_{i}+4\phi_{z}d_{i}F^{ii}u_{ii}w_{i}+2F^{ii}R_{iil}w_{l}\big)\nonumber\\
  &\geq&-\frac{C}{|Dw|^{2}}\big((\mu |Dw|^{4}+|Dw|^{3}+(\mu+\frac{1}{\mu})|Dw|^{2}+|Dw|)\mathcal{F}+\mu F^{ii}u_{ii}^{2}\big)\nonumber\\
  &&+\frac{2(1+\phi_{z}d)}{|Dw|^{2}}w_{i}D_{i}f,\label{4.40}
\end{eqnarray}
where $C=C(\alpha_{0}, M, n, m, |d|_{C^{3}}, L_{1}, L_{2})$. Here we use the Cauchy inequality and the fact that
$|R_{ijl}|\leq C(|Du|^{2}+|Du|+1)$. Now we deal with the last term. By (\ref{2.21}) and (\ref{2.25}), we have
\begin{eqnarray}
  \frac{2(1+\phi_{z}d)}{|Dw|^{2}}|w_{i}D_{i}f|&=&|\frac{2(1+\phi_{z}d)}{|Dw|^{2}}w_{i}(f_{x_i}+f_zu_i)-f_{p_i}(\alpha_0d_i+h'u_i+\frac{2w_lR_{ll}}{|Du|^2})|\nonumber\\
  &\leq& C(1+|Du|^{2k-1}),\label{4.41}
\end{eqnarray}
here we use the fact that $|R_{ll}|\leq C(\mu|Du|^2+|Du|+1)$.
Put (\ref{4.40}) and (\ref{4.41}) into (\ref{2.28}), we have
\begin{eqnarray}\label{2.29}
  0&\geq&\frac{2F^{ii}w_{li}^{2}}{|Dw|^{2}}-\frac{C\mu F^{ii}u_{ii}^{2}}{|Dw|^{2}}-C\mu |Dw|^{2}\mathcal{F}-C(|Dw|\mathcal{F}+1)\nonumber\\
  &&-C(1+|Du|^{2k-1}),
\end{eqnarray}
where $C=C(\alpha_{0}, M, n, m, |d|_{C^{3}}, L_{1}, L_{2}, \mu)$.

By (\ref{2.21}), (\ref{2.23}) and the inequality (see \cite{hmw})
\begin{eqnarray}
  (a+b)^{2}\geq \epsilon a^{2}-\frac{\epsilon}{1-\epsilon}b^{2},\nonumber
\end{eqnarray}
choose $\epsilon=\frac{1}{2}$, we obtain
\begin{eqnarray}
  w_{ii}^{2}&\geq&\frac{1}{4}u_{ii}^{2}-R_{ii}^{2}\nonumber\\
  &\geq& \frac{1}{4}u_{ii}^{2}-C(\mu^{2}|Dw|^{4}+|Dw|^{2}+1).\nonumber
\end{eqnarray}
It follows that
\begin{eqnarray}\label{2.32}
  0&\geq&(\frac{1}{8}-C\mu)\frac{F^{ii}u_{ii}^{2}}{|Dw|^{2}}-C\mu |Dw|^{2}\mathcal{F}-C(|Dw|+1)\mathcal{F}\nonumber\\
  &&-C(1+|Du|^{2k-1}).
\end{eqnarray}

There exists at least a index $l_0$ such that $u_{l_0}\geq \frac{|Du|}{\sqrt{n}}$. We rewrite the (\ref{2.25}) as
\begin{eqnarray}
  2w_{l_0}w_{l_0l_0}+2\sum_{q\neq l_0}w_{q}w_{ql_0}=-(\alpha_{0}d_{l_0}+h'u_{l_0})|Dw|^{2}.\nonumber
\end{eqnarray}
From (\ref{2.21}) we have
\begin{eqnarray}
  2(1+\phi_{z}d)w_{l_0}u_{l_0l_0}=-(\alpha_{0}d_{l_0}+h'u_{l_0})|Dw|^{2}-2w_{q}R_{ql}.
\end{eqnarray}
Since $|R_{l}|\leq 2L_{2}\leq \frac{u_{l}}{4}$, from (\ref{2.20}), we have $w_{l}\geq \frac{u_{l}}{4}$. If we assume that $|Du|\geq \frac{2\sqrt{n}\alpha_{0}|Dd|}{h'}$, and use the facts that $1+\phi_{z}d\geq\frac{1}{2}$ and $|R_{ij}|\leq C(\mu|Du|^{2}+|Du|+1)$, then
\begin{eqnarray}
 u_{l_0l_0}\leq -2h'|Dw|^{2}+12\sqrt{n}C(\mu|Dw|^{2}+|Dw|).\nonumber
\end{eqnarray}
If we assume that $|Dw|\geq \frac{2}{h'}\geq 10M+2$ and $\mu\leq\frac{h'}{12\sqrt{n}C}$, then
\begin{eqnarray}
  u_{l_0l_0}\leq -\frac{h'}{2}|Dw|^{2}.\label{h4.50}
\end{eqnarray}

Denote $u_{11}\geq\cdot\cdot\cdot\geq u_{nn}$. By (\ref{h4.50}), we can choose $\delta=\frac{h'}{2}$, $L=|Dw|^2$ and $\theta_1=\frac{C_n^m(h')^{k-1}}{4^{k-1}}$ in the Proposition \ref{pro7}, such that
\begin{eqnarray}
  u_{nn}\leq-\frac{h'}{2}|Dw|^{2},\quad F^{nn}\geq \frac{1}{n}\mathcal{F}\geq\frac{1}{n}\theta_1|Dw|^{2k-2}.\label{4.50}
\end{eqnarray}
We assume that $\mu\leq\min\{\frac{1}{16C},\frac{h'^{2}}{128nC}\}$. By  (\ref{2.32}) we obtain
\begin{eqnarray}
  0&\geq&\frac{h'^{2}}{128n}|Dw|^{2}\mathcal{F}-C(|Dw|+1)\mathcal{F}
 -C(1+|Du|^{2k-1}).\label{4.26}
\end{eqnarray}
By (\ref{4.50}), we have
\begin{eqnarray}
 0&\geq&\frac{h'^{2}}{128n}|Dw|^{2}-C|Dw|-C.
\end{eqnarray}
 It is easy to get a bound for  $|Dw|(x_0)$, then a bound for $G(x_0)$.

Anyway we have the bound
\begin{eqnarray}
  G(x_0)=\sup_{\overline{\Omega}_{\mu}}G(x)\leq C,\nonumber
\end{eqnarray}
where $C=C(\alpha_{0}, M, n, m, |d|_{C^{3}}, L_{1}, L_{2}, \mu)$. Thus we obtain
\begin{eqnarray}
  \sup_{\overline{\Omega}_{\mu}}|Du|\leq C+\log(1+2M)+\alpha_0\mu.
\end{eqnarray}
\end{proof}

By the same reason for Theorem \ref{th3}, we have the following boundary gradient estimate when $f=f(x,u)$.
\begin{theorem}\label{th4}
Suppose $\Omega\subset\mathbb{R}^n\ (n\geq3)$ is a bounded domain with $C^ 3$ boundary, and $2\leq k\leq C_n^m$,
Let $f(x,z)\in C^1(\overline{\Omega}\times [-M_0,M_0])$ is a nonnegative function and
$\phi\in C^3(\overline{\Omega}\times [-M_0,M_0])$, $M_0=\sup\limits_{\overline{\Omega}}|u|$.
We also assume that there exists constants $L_1$  and $L_2$ such that
\begin{eqnarray}
|f|_{C^1(\overline{\Omega}\times [-M_0,M_0])}&\leq& L_1,\\
|\phi|_{C^3(\overline{\Omega}\times [-M_0,M_0])}&\leq& L_2.
\end{eqnarray}
If $u \in C ^3 (\Omega)\cap C^1 (\overline{\Omega})$ is a $k$-admissible solution of  the equation
\begin{equation}
  \left\{
  \begin{aligned}
  S_{k}(W)&=f(x,u),\quad\text{in}\ \Omega,\\
  u_{\nu}&=\phi(x,u),\quad\text{on}\ \partial\Omega.
  \end{aligned}
  \right.
\end{equation}
 Then we have
\begin{eqnarray}
\sup_{\Omega_{\mu}}|Du|\leq C,
\end{eqnarray}
where $C$ is a constant depends only on $n$, $k$, $m$, $\mu$, $M_0$, $L_1$, $L_2$ and $\Omega$.
\end{theorem}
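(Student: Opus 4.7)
The plan is to reproduce the proof of Theorem~\ref{th4.1} verbatim, with two modifications that exploit the fact that $f$ now depends only on $(x,u)$ and not on the gradient. As in Theorem~\ref{th4.1}, I would consider the auxiliary function
\begin{eqnarray*}
G(x)=\log|Dw|^{2}+h(u)+\alpha_{0}d(x),\quad \text{in}\ \Omega_{\mu},
\end{eqnarray*}
with $w=u+\phi(x,u)d(x)$ and $h(u)=-\tfrac12\log(1+4M_0-u)$, and split into the three cases according to whether the maximum of $G$ over $\overline{\Omega}_\mu$ is attained on $\partial\Omega$, on $\partial\Omega_\mu\cap\Omega$, or at an interior point $x_0\in\Omega_\mu$. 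Cases~1 and~2 are identical to those in Theorem~\ref{th4.1}: Case~1 is treated via the Neumann condition and the resulting upper bound on $G_\nu$, which depends only on $|\phi|_{C^1}$, $|d|_{C^2}$, $M_0$; Case~2 is reduced to the interior gradient estimate, for which I would cite Theorem~\ref{th3} in place of Theorem~\ref{th2}, since it does not require the restriction $k\leq\tfrac{n-m}{n}C_n^m$.

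The only substantive change is in Case~3. The computation up to inequality~(\ref{4.40}) is unchanged, but in the analog of (\ref{4.41}) the term $\frac{2(1+\phi_z d)}{|Dw|^2}w_iD_if$ no longer involves any $f_{p_i}$ contribution. Using only $|f|_{C^1}\leq L_1$ and $|Du|\leq 2|Dw|$, this term is bounded by an absolute constant $C=C(L_1,L_2,M_0)$ rather than by $C(1+|Du|^{2k-1})$. Consequently the counterpart of~(\ref{4.26}) becomes
\begin{eqnarray*}
0\geq \frac{h'^{2}}{128n}|Dw|^{2}\mathcal{F}-C(|Dw|+1)\mathcal{F}-C.
\end{eqnarray*}

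At this point, instead of invoking Proposition~\ref{pro7} to obtain the lower bound $\mathcal{F}\geq c|Dw|^{2k-2}$ (which is exactly what forces the restriction $k\leq\tfrac{n-m}{n}C_n^m$ in Theorem~\ref{th4.1}), I would argue as in the proof of Theorem~\ref{th3}: since $\lambda(W)\in\Gamma_k$ and $F^{ii}$ can be ordered, the Newton--Maclaurin inequality~(\ref{2.7}) together with Proposition~\ref{pro1} gives
\begin{eqnarray*}
\mathcal{F}\geq c(n,m,k)\,S_k^{\frac{1}{k}}(\lambda(W))=c(n,m,k)\,f^{\frac{1}{k}}\geq c(n,m,k)(\min f)^{\frac{1}{k}}>0.
\end{eqnarray*}
This lower bound is uniform in $|Dw|$ and valid for all $2\leq k\leq C_n^m$. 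Plugging this into the displayed inequality and dividing through by $\mathcal{F}$ yields $|Dw|(x_0)\leq C$, hence $G(x_0)\leq C$ and finally $\sup_{\Omega_\mu}|Du|\leq C$ with $C$ depending only on the stated quantities.

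The only step that requires any care is verifying that the elimination of the $|Du|^{2k-1}$ source term in~(\ref{4.41}) really does go through cleanly; in particular, one must check the identity $2w_iw_{il}=-(\alpha_0 d_l+h'u_l)|Dw|^2$ from~(\ref{2.25}) still has $f_{p_l}$ absent on the right-hand side, so that no new gradient power is introduced. Since $f$ has no $p$-dependence, this is automatic, and no other estimate in the original proof of Theorem~\ref{th4.1} produces a $|Du|^{2k-1}$ factor. Thus the whole argument becomes structurally simpler than in Theorem~\ref{th4.1}, and the restriction on $k$ disappears.
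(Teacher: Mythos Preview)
Your proposal is correct and follows essentially the same approach as the paper: the paper likewise reuses the auxiliary function and Case~1--3 split from Theorem~\ref{th4.1}, bounds the term $\frac{2(1+\phi_z d)}{|Dw|^2}w_iD_if$ by a constant (since there is no $f_{p_l}$ contribution), and then replaces the appeal to Proposition~\ref{pro7} by the Newton--Maclaurin lower bound $\mathcal{F}\geq c(\min f)^{1/k}$, exactly as you describe. Your reference to Theorem~\ref{th3} for Case~2 is also the correct substitution.
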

\begin{proof}
  By the same auxiliary function and the same computations as in the proof above, now we deal with terms in (\ref{4.41}) as follows
  \begin{eqnarray}
     \frac{2(1+\phi_{z}d)}{|Dw|^{2}}|w_{i}D_{i}f|&=&|\frac{2(1+\phi_{z}d)}{|Dw|^{2}}w_{i}(f_{x_i}+f_zu_i)|\nonumber\\
  &\leq& 4L_1(1+|Du|^{-1}).
  \end{eqnarray}
  It is not hard to get, a different version of (\ref{4.26}),
  \begin{eqnarray}
  0&\geq&\frac{h'^{2}}{128n}|Dw|^{2}\mathcal{F}-C(|Dw|+1)\mathcal{F}
 -C(1+|Du|^{-1}).
\end{eqnarray}
From (\ref{4.50}), we still have
\begin{eqnarray}
  u_{nn}\leq-\frac{h'}{2}|Dw|^{2},\quad F^{nn}\geq \frac{1}{n}\mathcal{F}.\nonumber
\end{eqnarray}
By the Newton-Maclaurin inequality, we have
\begin{eqnarray}
  F^{nn}\geq \frac{1}{n}\mathcal{F}\geq c S_k^{\frac{1}{k}}(\lambda)\geq c(\min{f})^{\frac{1}{k}},
\end{eqnarray}
where $c=c(n,m,k)$ a universal constant. Then we also have
\begin{eqnarray}
 0&\geq&\frac{h'^{2}}{128n}|Dw|^{2}-C|Dw|-C.
\end{eqnarray}
It is also give a bound for $|Dw|$ at interior maximum point of $G$.
Through the same discussion as before, we have
\begin{eqnarray}
  \sup_{\overline{\Omega}_{\mu}}|Du|\leq C+\log(1+2M)+\alpha_0\mu.
\end{eqnarray}
\end{proof}

\section{Global Second Order Derivatives Estimates}\label{sec5}
\medskip


 \subsection{Reduce the global second derivative estimates into double normal derivatives estimates on boundary}

Using the method of Lions-Trudinger-Urbas \cite{ltu}, we can reduce the second derivative estimates of the solution into the  boundary double normal estimates.

\begin{lemma}\label{th4.4}
 Let $\Omega\subset \mathbb{R}^n$ be a bounded domain with $C^4$ boundary.
 Assume $f(x,z)\in C^{2}(\overline{\Omega}\times \mathbb{R})$ is positive
 and $\phi(x,z)\in C^{3}(\overline{\Omega}\times\mathbb{R})$ with $\phi_z-2\kappa_{min}<0$.
 If $u\in C^{4}(\Omega)\cap C^{3}(\overline{\Omega})$ is a $k$-admissible
solution of the Neumann problem
\begin{equation}\label{eq5}
\left\{
\begin{aligned}
 S_{k}(W)={f}(x,u),&&\text{in}\ \ \Omega,\\
u_{\nu}=\phi(x,u),&&\text{on}\ \ \partial\Omega.
\end{aligned}
\right.
\end{equation}
 Denote $N=\sup\limits_{\partial\Omega}|u_{\nu\nu}|$, then
 \begin{eqnarray}\label{5.1}
 \sup_{\overline{\Omega}}|D^2u|\leq C_{0}(1+N).
 \end{eqnarray}
where $C_0$ depends on $n$, $m$, k, $|u|_{C^{1}(\overline{\Omega})}$,
$|f|_{C^{2}(\overline{\Omega}\times [-M_{0},M_{0}])}$,
$\min f$, $|\phi|_{C^{3}(\overline{\Omega}\times[-M_{0},M_{0}])}$ and $\Omega$. Here $M_{0}=\sup\limits_{\overline{\Omega}}|u|$.
\end{lemma}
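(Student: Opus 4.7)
The plan is to adapt the Lions--Trudinger--Urbas reduction scheme to the operator $S_k(W)$: first reduce $|D^2u|$ to the maximum eigenvalue of $D^2u$, then show the maximum eigenvalue is bounded by $C_0(1+N)$ by combining an interior maximum-principle argument with a boundary analysis that splits the direction into tangential and normal parts.

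First I would use $k$-admissibility to reduce to bounding the largest eigenvalue. Since $\lambda(W)\in\Gamma_k$ forces $S_1(W)>0$ and $S_1(W)$ is, by (\ref{w1}), a positive multiple of $\Delta u$, we have $\Delta u>0$, so $\lambda_{\min}(D^2u)\geq -(n-1)\lambda_{\max}(D^2u)$; hence it suffices to control
\[
M:=\sup_{\overline{\Omega}\times S^{n-1}} u_{\xi\xi}(x).
\]
I would set up an auxiliary function of the form $v(x,\xi)=u_{\xi\xi}(x)+g(x,u,Du)$ for a correction $g$ chosen so that the maximum principle gives useful information, and examine $v$ at its maximum point $(x_0,\xi_0)$.

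If $x_0\in\Omega$, differentiating the concave equation $G(D^2u):=S_k^{1/k}(W)=f^{1/k}$ twice in $\xi_0$ and using the concavity statement (\ref{2.8}) in Proposition \ref{pro3} gives $G^{ij}(u_{\xi_0\xi_0})_{ij}\geq -C$. Combined with the upper bound $G^{ij}v_{ij}(x_0)\leq 0$ from the maximum principle and the uniform lower bound $\mathcal F\geq [C_n^k]^{1/k}$ from Proposition \ref{pro4}, this yields an $N$-independent bound $u_{\xi_0\xi_0}(x_0)\leq C$. If instead $x_0\in\partial\Omega$, I decompose $\xi_0=\tau+\sigma\nu$ with $\tau$ tangent to $\partial\Omega$, and write
\[
u_{\xi_0\xi_0}=u_{\tau\tau}+2\sigma u_{\tau\nu}+\sigma^2 u_{\nu\nu}.
\]
The normal-normal piece is bounded by $N$ directly; the mixed piece $u_{\tau\nu}$ is controlled by differentiating $u_\nu=\phi(x,u)$ once tangentially, which yields $u_{\tau\nu}=\phi_{x_i}\tau^i+\phi_z u_\tau-u_i(D_\tau\nu)^i$, a quantity bounded by $C(|u|_{C^1},|\phi|_{C^1},\Omega)$. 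The delicate piece is $u_{\tau\tau}$: following Lions--Trudinger--Urbas, one examines the Hopf-type inequality $v_\nu(x_0)\leq 0$, which after using the tangentially-twice-differentiated Neumann condition produces a relation in which the coefficient of $u_{\tau\tau}$ has the sign of $\phi_z-2\kappa_{\min}$. The hypothesis $\phi_z-2\kappa_{\min}<0$ (equivalently $a+2\kappa_{\min}>0$ under $\phi=-au+b$) then yields $u_{\tau\tau}(x_0)\leq C(1+N)$.

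The main obstacle is this last step. The interior case and the normal-normal contribution are essentially routine given the tools of Section \ref{sec2}, but the tangential-tangential estimate is exactly where the geometric hypothesis on $\partial\Omega$ enters and where the correction term $g(x,u,Du)$ must be designed so that the lower-order contributions from $\phi(x,u)$ and from the tangential differentiation do not absorb the favorable $\phi_z-2\kappa_{\min}$ term. Subsidiary care is also needed because $F=S_k$ itself is not concave; one must work throughout with $G=S_k^{1/k}$ so that the second-order term $G^{ij,pq}u_{ij\xi_0}u_{pq\xi_0}$ from the twice-differentiated equation has the correct sign. Once the tangential estimate is established, piecing together the three cases yields $M\leq C_0(1+N)$, and combining with the opening reduction gives $\sup_{\overline{\Omega}}|D^2u|\leq C_0(1+N)$.
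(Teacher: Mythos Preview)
Your overall strategy matches the paper's: work with the concave $G=S_k^{1/k}$, maximize an auxiliary function over $\overline\Omega\times S^{n-1}$, push the maximum to the boundary, and there exploit $\phi_z-2\kappa_{\min}<0$ via a Hopf inequality. But two steps, as written, do not go through.

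The more serious gap is the non-tangential boundary case. Your correction $g(x,u,Du)$ does not depend on $\xi$, and without a $\xi$-dependent term the reduction fails when $\xi_0=\alpha\tau+\beta\nu$ with $0<\beta<1$. The Hopf inequality at $(x_0,\xi_0)$ controls $u_{\xi_0\xi_0\nu}=\alpha^2 u_{\tau\tau\nu}+2\alpha\beta\, u_{\tau\nu\nu}+\beta^2 u_{\nu\nu\nu}$; only the first summand is expressible through the tangentially differentiated Neumann condition, while $u_{\tau\nu\nu}$ and $u_{\nu\nu\nu}$ are third normal derivatives that neither the boundary condition nor the equation isolates. The paper (following \cite{ltu}) instead subtracts the $\xi$-dependent term
\[
v'(x,\xi)=2(\xi\cdot\nu)\,\xi'\cdot\bigl(D_x\phi+\phi_z\,Du-u_l\,D\nu^l\bigr),\qquad \xi'=\xi-(\xi\cdot\nu)\nu,
\]
designed so that on $\partial\Omega$ one has $u_{\xi_0\xi_0}-v'(x_0,\xi_0)=\alpha^2 u_{\tau\tau}+\beta^2 u_{\nu\nu}$ and $v'(\cdot,\tau)=v'(\cdot,\nu)=0$. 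Then $v(x_0,\xi_0)=\alpha^2 v(x_0,\tau)+\beta^2 v(x_0,\nu)\leq \alpha^2 v(x_0,\xi_0)+\beta^2 v(x_0,\nu)$ forces $v(x_0,\xi_0)\leq v(x_0,\nu)$, reducing the non-tangential case to the \emph{normal} (not tangential) direction, which is bounded by $N$ directly. The purely tangential case is then the only one where Hopf and $\phi_z-2\kappa_{\min}<0$ are used; there one also needs the maximality of $v$ in the $\xi$-variable to bound the off-diagonal $u_{1i}$ (cf.\ (\ref{5.41})--(\ref{5.42})).

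The second gap is in the interior step. Differentiating $G(D^2u)=\widetilde f(x,u)$ twice gives $G^{ij}u_{ij\xi\xi}\geq D_{\xi\xi}\widetilde f$, and the latter contains $\widetilde f_z\,u_{\xi\xi}$, which can be large and of either sign; the bound $\widetilde{\mathcal F}\geq [C_{C_n^m}^k]^{1/k}$ alone cannot absorb it. The paper adds $K_2|Du|^2$ to the auxiliary function: this contributes $2K_2\,G^{ij}u_{il}u_{jl}\geq 2K_2\,G^{11}u_{11}^2$, and the structural inequality $G^{11}u_{11}^2\geq \frac{\widetilde f}{mC_n^m}\,u_{11}$ (see (\ref{5.21}), a consequence of (\ref{2.3}) and (\ref{sw1})) then lets one choose $K_2$ to dominate $\widetilde f_z\,u_{\xi\xi}$ and, with a further $K_1|x|^2$ term, obtain $G^{ij}v_{ij}>0$, ruling out an interior maximum. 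Finally, a minor sign: at a boundary maximum with outer normal $\nu$ the Hopf inequality is $v_\nu(x_0)\geq 0$, not $\leq 0$.
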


\begin{proof}
Write equation (\ref{eq5}) in the form of
\begin{equation}\label{eq8}
\left\{
\begin{aligned}
 S_{k}^{\frac{1}{k}}(W)=\widetilde{f}(x,u),&&\text{in}\ \ \Omega,\\
u_{\nu}=\phi(x,u),&&\ \text{on}\ \ \partial\Omega.
\end{aligned}
\right.
\end{equation}
where $\widetilde{f}=f^{\frac{1}{k}}$.
Since $\lambda(W)\in\Gamma_k\subset\Gamma_2$ in $\mathbb{R}^{C_n^m}$, we have
 \begin{eqnarray}
   \sum_{i\neq j}|u_{ij}|\leq c(n,m)S_1(W)=mc(n,m)S_1(D^2u),\label{5.2}
 \end{eqnarray}
 where $c(n,m)$ is a universal number independent of $u$.
 Thus, it is sufficiently to prove (\ref{5.1}) for any direction $\xi\in\mathbb{S}^{n-1}$, that is
  \begin{eqnarray}
 u_{\xi\xi}\leq C_{0}(1+N).\label{5.3}
 \end{eqnarray}

 We consider the following auxiliary function in $\Omega\times \mathbb{S}^{n-1}$,
 \begin{eqnarray}
   v(x,\xi):=u_{\xi\xi}-v'(x,\xi)+K_1|x|^2+K_2|Du|^2,\label{5.22}
 \end{eqnarray}
 where $v'(x,\xi)=a^{l}u_{l}+b:= 2(\xi\cdot\nu)\xi'\cdot(\phi_{x_{l}}+\phi_{z}u_{l}-u_{l}D\nu^{l})$, with  $\xi'=\xi-(\xi\cdot\nu)\nu$ and $a^{l}=2(\xi\cdot\nu)(\xi'^{l}\phi_{z}-\xi'^{i}D_{i}\nu^{l})$. $K_{1}$, $K_{2}$ are positive constants to be determined. By a direct computation, we have
 By direct computations, we have
  \begin{eqnarray}
    v_{i}&=&u_{\xi\xi i}-D_{i}a^{l}u_{l}-a^{l}u_{ii}-D_{i}b+2K_{1}x_{i}+2K_{2}u_{l}u_{li},\label{5.25}\\
    v_{ij}&=& u_{\xi\xi ij}-D_{ij}a^{l}u_{l}-D_{i}a^{l}u_{lj}-D_{j}a^{l}u_{li}-a^{l}u_{lij}-D_{ij}b\nonumber\\
    &&+2K_{1}\delta_{ij}+2K_{2}u_{li}u_{lj}+2K_{2}u_{l}u_{lij}.\label{5.26}
    \end{eqnarray}

  Denote $\widetilde{F}(D^2u)=S_{k}^{\frac{1}{k}}(W)$, and
    \begin{eqnarray}
      \widetilde{F}^{ij}=\frac{\partial \widetilde{F}}{\partial u_{ij}}=\frac{\partial S_k^{\frac{1}{k}}(W)}{\partial    w_{\overline{\alpha}\overline{\beta}}}\frac{\partial w_{\overline{\alpha}\overline{\beta}}}{\partial u_{ij}},
    \end{eqnarray}
    and
    \begin{eqnarray}
      \widetilde{F}^{pq,rs}&=&\frac{\partial^{2} \widetilde{F}}{\partial u_{pq}\partial u_{rs}}\\
      &=&\frac{\partial^{2} S_k^{\frac{1}{k}}(W)}{\partial w_{\overline{\alpha}\overline{\beta}}\partial w_{\overline{\eta}\overline{\xi}}}\frac{\partial w_{\overline{\alpha}\overline{\beta}}}{\partial u_{pq}}\frac{\partial w_{\overline{\eta}\overline{\xi}}}{\partial u_{rs}},\nonumber
    \end{eqnarray}
    since $w_{\overline{\alpha}\overline{\beta}}$ is a linear combination of $u_{ij},\ 1\leq i,j\leq n$.
 Differentiating the equation (\ref{eq8}) twice, we have
 \begin{eqnarray}
   \widetilde{F}^{ij}u_{ijl}=D_l\widetilde{f},\label{5.4}
 \end{eqnarray}
 and
 \begin{eqnarray}
   \widetilde{F}^{pq,rs}u_{pq\xi}u_{rs\xi}+\widetilde{F}^{ij}u_{ij\xi\xi}
   =D_{\xi\xi}\widetilde{f}.\label{5.6}
 \end{eqnarray}
 By the concavity of $S_{k}^{\frac{1}{k}}(W)$ operator with respect to $W$, we have
    \begin{eqnarray}
      D_{\xi\xi}\widetilde{f}=
      \widetilde{F}^{pq,rs}u_{pq\xi}u_{rs\xi}+\widetilde{F}^{ij}u_{ij\xi\xi}
      \leq\widetilde{F}^{ij}u_{ij\xi\xi}.\label{5.12}
    \end{eqnarray}

 Now we contract (\ref{5.26}) with $\widetilde{F}^{ij}$ to get, using
 (\ref{5.4})-(\ref{5.12}),
 \begin{eqnarray}
      \widetilde{F}^{ij}v_{ij}&=&\widetilde{F}^{ij}u_{ij\xi\xi}
      -\widetilde{F}^{ij}D_{ij}a^{l}u_{l}-2\widetilde{F}^{ij}D_{i}a^{l}u_{lj}
      -\widetilde{F}^{ij}u_{ijl}a^{l}-\widetilde{F}^{ij}D_{ij}b\nonumber\\
      &&+2K_{1}\widetilde{\mathcal{F}}+2K_{2}\widetilde{F}^{ij}u_{il}u_{jl}
      +2K_{2}\widetilde{F}_{ij}u_{ijl}u_{l}\nonumber\\
      &\geq&D_{\xi\xi}\widetilde{f}-\widetilde{F}^{ij}D_{ij}a^{l}u_{l}
      -2\widetilde{F}^{ij}D_{i}a^{l}u_{ij}-a^{l}D_{l}\widetilde{f}
      -\widetilde{F}^{ij}D_{ij}b\nonumber\\
      &&+2K_{1}\widetilde{\mathcal{F}}+2K_{2}\widetilde{F}^{ij}u_{il}u_{jl}
      +2K_{2}u_{l}D_{l}\widetilde{f}.
    \end{eqnarray}
 where $\widetilde{\mathcal{F}}=\sum\limits_{i=1}^{n}\widetilde{F}^{ii}$.
 Note that
    \begin{eqnarray}
      D_{\xi\xi}\widetilde{f}=\widetilde{f}_{\xi\xi}+2\widetilde{f}_{\xi z}u_{\xi}+\widetilde{f}_{z}u_{\xi\xi},\nonumber\\
      D_{ij}a^{l}=2(\xi\cdot\nu)\xi'^{l}\phi_{zz}u_{ij}+r^{l}_{ij},\nonumber\\
      D_{ij}b=2(\xi\cdot\nu)\xi'^{l}\phi_{x_{l}z}u_{ij}+r_{ij},\nonumber
    \end{eqnarray}
    with $|r^{l}_{ij}|, |r_{ij}|\leq C(|u|_{C^{1}}, |\phi|_{C^{3}}, |\partial \Omega|_{C^{4}})$. At the maximum point $x_{0}\in\Omega$ of $v$, we can assume $(u_{ij})_{n\times n}$ is diagonal. It follows that, by the Cauchy-Schwartz inequality,
 \begin{eqnarray}\label{3.10}
      \widetilde{F}^{ij}v_{ij}&\geq&-C(\mathcal{\widetilde{F}}+K_2+1)
      -C\widetilde{F}^{ii}|u_{ii}|+\widetilde{f}_{z}u_{\xi\xi}
      +2K_{1}\widetilde{\mathcal{F}}+2K_{2}\widetilde{F}^{ii}u_{ii}^2\nonumber\\
      &\geq&-C(\mathcal{\widetilde{F}}+K_2+1)+\widetilde{f}_{z}u_{\xi\xi}
      +2K_{1}\widetilde{\mathcal{F}}+(2K_{2}-1)\widetilde{F}^{ii}u_{ii}^2,
    \end{eqnarray}
    where $C=C(|u|_{C^{1}}, |\phi|_{C^{3}}, |\partial \Omega|_{C^{4}}, |f|_{C^{2}})$.

 Assume $u_{11}\geq u_{22}\cdots\geq u_{nn}$, and denote $\lambda_{1}\geq\lambda_{2}\geq\cdots\geq\lambda_{C_{n}^{m}}$ the eigenvalues of the matrix $(w_{\overline{\alpha}\overline{\beta}})_{C_n^m\times C_n^m}$.
It is easy to see    $\lambda_{1}=u_{11}+\sum\limits_{i=2}^mu_{ii}\leq mu_{11}$. Then we have, by (\ref{sw1}) in Proposition \ref{pro2} and (\ref{2.8}) in Proposition \ref{pro3},
  \begin{eqnarray}
    \widetilde{F}^{11}u_{11}^{2}&=&\sum_{1\in\overline{\alpha}}
    \frac{1}{k}S_k^{\frac{1}{k}-1} S_{k-1}(\lambda|N_{\overline{\alpha}})u_{11}^{2}\nonumber\\
    &\geq&\frac{1}{mk}S_k^{\frac{1}{k}-1} S_{k-1}(\lambda|1)\lambda_{1}u_{11}\nonumber\\
    &\geq&\frac{1}{m C_n^m}S_{k}^{\frac{1}{k}}u_{11}=\frac{\widetilde{f}}{m C_n^m}u_{11}.\label{5.21}
  \end{eqnarray}
 We can assume $u_{\xi\xi}\geq 0$, otherwise we have (\ref{5.3}). Plug (\ref{5.21}) into (\ref{3.10}) and use the Cauchy-Schwartz inequality, then
\begin{eqnarray}
  \widetilde{F}^{ii}v_{ii}&\geq&(K_{2}-1)\sum_{i=1}^{n}\widetilde{F}^{ii}u_{ii}^{2}
  +(\frac{K_{2}\widetilde{f}}{mC_n^m}+\widetilde{f}_{z})u_{\xi\xi}
  +(2K_{1}-C)\mathcal{\widetilde{F}}
  \nonumber\\&&-C(K_2+1).
\end{eqnarray}
Choose $K_{2}=\frac{mC_n^m|\max f_{z}|}{k\min f}+1$ and $K_{1}=C(K_2+2)+1$. It follows that
\begin{eqnarray}
    \widetilde{F}^{ii}v_{ii}&\geq&(2K_{1}-C)\widetilde{\mathcal{F}}-C(K_2+1)>0,
\end{eqnarray}
since we have $\widetilde{\mathcal{F}}\geq 1$ from (\ref{2.10}). This implies that $v(x,\xi)$ attains its maximum on the boundary by the maximum principle. Now we assume $(x_{0},\xi_{0})\in \partial\Omega\times \mathbb{S}^{n-1}$ is the maximum pint of $v(x,\xi)$ in $\overline{\Omega}\times\mathbb{S}^{n-1}$.  Then we consider two cases as follows,

$\mathbf{Case1}$. $\xi_{0}$ is a tangential vector at $x_{0}\in\partial\Omega$.

We directly have $\xi_0\cdot\nu=0$ , $\nu=-Dd$, $v'(x_0,\xi_0)=0$, and $u_{\xi_0, \xi_0}(x_0)>0$. As in \cite{l}, we define
\begin{eqnarray}
  c^{ij}=\delta_{ij}-\nu^i\nu^j,\quad\ \text{in}\ \Omega_{\mu},
\end{eqnarray}
and it is easy to see that $c^{ij}D_j$ is a tangential direction  on $\partial\Omega$.
We compute at $(x_0, \xi_0)$.

From the boundary condition, we have
\begin{eqnarray}
  u_{li}\nu^l&=&(c^{ij}+\nu^i\nu^j)\nu^lu_{lj}\nonumber\\
  &=&c^{ij}u_j\phi_z+c^{ij}\phi_{x_j}-c^{ij}u_lD_j\nu^l+\nu^i\nu^j\nu^lu_{lj}.\label{5.16}
\end{eqnarray}
It follows that
\begin{eqnarray}
  u_{lip}\nu^l&=&[c^{pq}+\nu^p\nu^q]u_{liq}\nu^l\nonumber\\
  &=&c^{pq}D_q(c^{ij}u_j\phi_z+c^{ij}\phi_{x_j}-c^{ij}u_lD_j\nu^l+\nu^i\nu^j\nu^lu_{lj})-c^{pq}u_{li}D_q\nu^l+\nu^p\nu^q\nu^lu_{liq},\nonumber
\end{eqnarray}
then we obtain
\begin{eqnarray}
  u_{\xi_0\xi_0\nu}&=&\sum_{ilp=1}^{n}\xi_0^i\xi_0^pu_{lip}\nu^l\nonumber\\
  &=&\sum_{i=1}^{n}\xi_0^i\xi_0^q[D_q(c^{ij}u_j\phi_z+c^{ij}\phi_{x_j}-c^{ij}u_lD_j\nu^l+\nu^i\nu^j\nu^lu_{lj})-u_{li}D_q\nu^l]\nonumber\\
    &\leq& \phi_z u_{\xi_0\xi_0}-2\xi_0^i\xi_0^q u_{li}D_q\nu^l+C(1+|u_{\nu\nu}|).
\end{eqnarray}
We assume $\xi_0=e_1$, it is easy to get the bound for $u_{1i}(x_0)$ for $i>1$ from the maximum of $v(x,\xi)$ in the $\xi_0$ direction. In fact, we can assume $\xi(t)=\frac{(1, t, 0,\cdots, 0)}{\sqrt{1+t^2}}$. Then we have
\begin{eqnarray}
  0&=&\frac{dv(x_0,\xi(t))}{dt}|_{t=0}\nonumber\\
  &=&2u_{12}(x_0)-2\nu^2(\phi_zu_1-u_lD_l\nu^l),\nonumber
\end{eqnarray}
so
\begin{eqnarray}
  |u_{12}|(x_0)\leq C+C|Du|.\label{5.41}
\end{eqnarray}
Similarly, we have for $\forall i>1$,
\begin{eqnarray}
  |u_{1i}|(x_0)\leq C+C|Du|.\label{5.42}
\end{eqnarray}
Thus we have, by $D_1\nu^1\geq\kappa_{min}$,
\begin{eqnarray}
  u_{\xi_0\xi_0\nu}&\leq&\phi_z u_{\xi_0\xi_0}-2D_1\nu^1u_{11}+C(1+|u_{\nu\nu}|)\nonumber\\
  &\leq&(\phi_z-2\kappa_{min}) u_{\xi_0\xi_0}+C(1+|u_{\nu\nu}|).\nonumber
\end{eqnarray}
On the other hand, we have from the Hopf lemma, (\ref{5.25}) and (\ref{5.42}),
\begin{eqnarray}
  0&\leq&v_{\nu}(x_0,\xi_0)\nonumber\\
  &=&u_{\xi_0\xi_0 \nu}-D_{\nu}a^{l}u_{l}-a^{l}u_{\nu\nu}-D_{\nu}b+2K_{1}x_{i}\nu^i+2K_{2}u_{l}u_{l\nu}\nonumber\\
    &\leq&(\phi_z-2\kappa_{min})u_{\xi_0\xi_0}+C(1+|u_{\nu\nu}|).\nonumber
\end{eqnarray}
Then we get, since $2\kappa_{min}-\phi_z\geq c>0$,
\begin{eqnarray}
  u_{\xi_0\xi_0}(x_0)\leq C(1+|u_{\nu\nu}|).
\end{eqnarray}

   \textbf{Case2.} $\xi_0$ is non-tangential.

   We can find a tangential vector $\tau$, such that $\xi_0 = \alpha\tau+\beta\nu$,
with $\alpha^2 + \beta^2 = 1$. Then we have
\begin{eqnarray}
  u_{\xi_0\xi_0}(x_0)&=&\alpha^2u_{\tau\tau}(x_0)+\beta^2u_{\nu\nu}(x_0)+2\alpha\beta u_{\tau\nu}(x_0)\nonumber\\
  &=&\alpha^2u_{\tau\tau}(x_0)+\beta^2u_{\nu\nu}(x_0)+2(\xi_0\cdot\nu)\xi'_0\cdot(\phi_zDu-u_lD\nu^l).\nonumber
\end{eqnarray}
By the definition of $v(x_0,\xi_0)$,
\begin{eqnarray}
  v(x_0,\xi_0)&=&\alpha^2v(x_0,\tau)+\beta^2v(x_0,\nu)\nonumber\\
  &\leq&\alpha^2v(x_0,\xi_0)+\beta^2v(x_0,\nu).\nonumber
\end{eqnarray}
Thus,
\begin{eqnarray}
  v(x_0,\xi_0)=v(x_0,\nu),\nonumber
\end{eqnarray}
and
\begin{eqnarray}
  u_{\xi_0\xi_0}(x_0)\leq |u_{\nu\nu}|+C.
\end{eqnarray}
In conclusion, we have (\ref{5.3}) in both cases.
 \end{proof}


\subsection{Global second order estimates by double normal  estimates on boundary}

\medskip
Generally, the double normal estimates are the most important and hardest  parts for the Neumann problem.
As in \cite{ltu} and \cite{mq},
we construct sub and super barrier function to give lower and upper bounds for $u_{\nu\nu}$ on the boundary. Then we give the global
second order estimates.

\subsubsection{\bf Global second order estimate for Theorem \ref{th1.1}}
In this subsection, we establish the following global second order estimate.
\begin{theorem}\label{th5.1}
Let $\Omega\subset\mathbb{R}^{n}$ be a bounded domain with $C^{4}$ boundary,  $2\leq m\leq n-1$, and $2\leq k\leq C_{n-1}^{m-1}$.
Assume $f(x,z)\in C^{2}(\overline{\Omega}\times \mathbb{R})$ is positive
and $\phi(x,z)\in C^{3}(\overline{\Omega}\times\mathbb{R})$ with $\phi_z-2\kappa_{min}<0$.
If $u\in C^{4}(\Omega)\cap C^{3}(\overline{\Omega})$ is a $k$-admissible
solution of the Neumann problem (\ref{eq5}).
 Then we have
\begin{eqnarray}
\sup_{\overline{\Omega}}|D^{2}u|\leq C,\label{5.5}
\end{eqnarray}
where $C$ depends only on $n$, $m$, k, $|u|_{C^{1}(\overline{\Omega})}$,$|f|_{C^{2}(\overline{\Omega}\times[-M_{0},M_{0}])}$, $\min f$, $|\phi|_{C^{3}(\overline\Omega\times[-M_{0},M_{0}])}$ and $\Omega$, where $M_{0}=\sup\limits_{\Omega}|u|$.
\end{theorem}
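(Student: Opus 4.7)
By Lemma \ref{th4.4},
\[
 \sup_{\overline\Omega} |D^2 u| \leq C_0 (1 + N),
 \qquad N := \sup_{\partial\Omega} |u_{\nu\nu}|,
\]
and the hypothesis $\phi_z - 2\kappa_{\min} < 0$ demanded there is exactly $a + 2\kappa_{\min} > 0$, since here $\phi = -au + b$ gives $\phi_z = -a$. Thus the theorem reduces to the purely boundary double-normal estimate $\sup_{\partial\Omega} |u_{\nu\nu}| \leq C$. The plan is to produce this by two one-sided barrier arguments, one for the upper and one for the lower bound, both following the Lions--Trudinger--Urbas scheme \cite{ltu} as adapted by Ma--Qiu \cite{mq} to $S_k$-type operators, with essential modifications to accommodate the $W$-matrix structure.

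The barriers would be built from
\[
 g(x) := u_\nu(x) + a(x)u(x) - b(x),
\]
which vanishes on $\partial\Omega$ and is smooth in a tubular neighborhood $\Omega_\mu$ once $\nu = -Dd$ is extended smoothly into $\Omega_\mu$. Let $x_0 \in \partial\Omega$ realize the supremum (respectively the infimum) of $u_{\nu\nu}$ on $\partial\Omega$, and consider auxiliary functions of the form
\[
 \Psi^\pm(x) := \pm g(x) - A_1 d(x) + A_2 |x-x_0|^2 + A_3 (u(x) - u(x_0))^2,
\]
with constants $A_i$ to be chosen. The goal is to arrange $F^{ij}\Psi^\pm_{ij} \leq 0$ in $\Omega_\mu$ and $\Psi^\pm \geq 0$ on $\partial\Omega_\mu \cap \Omega$, the latter being immediate from the $C^1$ estimates of Section \ref{sec4}. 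Combined with $\Psi^\pm(x_0) = 0$, the maximum principle then gives $\Psi^\pm \geq 0$ throughout $\Omega_\mu$, and Hopf's lemma at $x_0$ yields $\Psi^\pm_\nu(x_0) \leq 0$. Since $d_\nu = -1$ and $g_\nu(x_0) = u_{\nu\nu}(x_0) + (au-b)_\nu(x_0) + O(1)$, this produces the one-sided bound on $u_{\nu\nu}(x_0)$; the assumption $a + 2\kappa_{\min} > 0$ enters precisely when combining the tangential identity $g|_{\partial\Omega} \equiv 0$ with the curvature of $\partial\Omega$ to show that the coefficient of $u_{\nu\nu}(x_0)$ coming out of Hopf has a strict sign.

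The main obstacle, and the place where the hypothesis $k \leq C_{n-1}^{m-1}$ is essential, is the verification of $F^{ij}\Psi^\pm_{ij} \leq 0$ in $\Omega_\mu$. Differentiating the equation produces $F^{ij}u_{ij\ell} = D_\ell f$, which controls the leading third-order piece of $F^{ij}g_{ij}$; what remain are cross-terms of the form $F^{ij}(D_iu)(D_j\nu^\ell)$, $F^{ij}a_{ij}$ and $a\,F^{ij}u_{ij}$, whose worst-case size is $\mathcal F\cdot|D^2u|$ and so cannot be absorbed by $\mathcal F$ alone. To beat them I would rotate coordinates at the point under consideration so $\nu = e_n$, and exploit the formula \eqref{w1}: the $C_{n-1}^{m-1}$ multi-indices $\overline{\alpha}$ containing $n$ each contribute a diagonal entry $W_{\overline{\alpha}\overline{\alpha}}$ that contains $u_{nn}$ as a summand, while the remaining diagonal entries and (by \eqref{w2}, \eqref{w3}) the off-diagonal entries are controlled by the tangential data. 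Because $k \leq C_{n-1}^{m-1}$, Proposition \ref{pro7} together with the Newton--Maclaurin inequality (Proposition \ref{pro4}) delivers a quantitative lower bound $\mathcal F \geq c|u_{\nu\nu}|^{k-1}$ whenever $|u_{\nu\nu}|$ is large, which is precisely strong enough to dominate the cross-terms after a careful choice of $A_1, A_2, A_3$. Beyond the range $k \leq C_{n-1}^{m-1}$ the number of $W$-eigenvalues that see the normal direction is too small for such an inequality to hold, which is exactly why the companion Theorem \ref{th0} must compensate by imposing the extra $(m,k_0)$-convexity of $\partial\Omega$. Once both barriers close, the resulting $|u_{\nu\nu}| \leq C$ on $\partial\Omega$ is fed back into Lemma \ref{th4.4} to deduce \eqref{5.5}.
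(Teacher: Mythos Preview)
Your reduction to a double-normal estimate via Lemma \ref{th4.4} is correct, and the overall barrier/Hopf strategy is sound in outline. However, the heart of your argument---the place where you invoke the hypothesis $k\le C_{n-1}^{m-1}$---is a genuine gap, and the mechanism you propose does not work.

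You claim that Proposition \ref{pro7} together with Newton--Maclaurin gives $\mathcal F\ge c|u_{\nu\nu}|^{k-1}$ whenever $|u_{\nu\nu}|$ is large. This fails for two reasons. First, Proposition \ref{pro7} carries the range restriction $k\le \frac{n-m}{n}C_n^m=C_{n-1}^m$, which for $m>n/2$ is \emph{strictly smaller} than $C_{n-1}^{m-1}=\frac{m}{n}C_n^m$; in particular for $m=n-1$ it allows only $k=1$, so the proposition is unavailable throughout most of the range of Theorem \ref{th5.1}. Second, the hypothesis of Proposition \ref{pro7} is that the \emph{smallest} eigenvalue $\mu_n$ of $D^2u$ is large and negative; a large positive $u_{\nu\nu}$ gives no such information, so the upper barrier cannot be closed this way.

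The paper's route is entirely different and does not touch Proposition \ref{pro7}. The hypothesis $k\le C_{n-1}^{m-1}$ is used in Lemma \ref{le5.1}: for $h=-d+K_3 d^2$, the matrix $W(D^2h)$ has exactly $C_{n-1}^{m-1}$ diagonal entries containing the large normal term $2K_3$; hence if $k\le C_{n-1}^{m-1}$ one can make $S_l(\lambda(D^2h))$ arbitrarily large for all $l\le k$, so $h$ (and $h-\tfrac{K_3}{2n}|x|^2$) are $k$-admissible. Concavity of $S_k^{1/k}$ then yields $F^{ij}h_{ij}\ge K_3^{1/2}(1+\mathcal F)$. With this in hand the paper uses the much simpler barrier $P=Du\cdot\nu-\phi(x,u)$: one checks $|F^{ij}P_{ij}|\le C_2(1+N)(1+\mathcal F)$, so $(A+\tfrac12 N)h\mp P$ are supersolutions in $\Omega_\mu$ for large $K_3$, and Hopf at the boundary gives $|u_{\nu\nu}|\le C+\tfrac12 N$, hence $N\le C$. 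No quadratic terms in $|x-x_0|$ or $(u-u(x_0))$ are needed, and no case analysis on eigenvalues of $D^2u$ is performed; all the work is pushed into the admissibility of $h$.

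A minor point: you say the sign condition $\phi_z-2\kappa_{\min}<0$ enters in the Hopf step of your barrier. It does not; that condition is consumed already in Lemma \ref{th4.4} (Case 1, tangential $\xi_0$) and plays no role in the double-normal estimate itself.
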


First, we denote $d(x)=dist(x,\partial\Omega)$, and define
\begin{eqnarray}
  h(x):=-d(x)+K_3d^{2}(x).\label{h1}
\end{eqnarray}
where $K_3$ is large constant to be determined later. Then we give the following key Lemma.

\begin{lemma}\label{le5.1}
  Suppose $\Omega\subset\mathbb{R}^n$ is a bounded domain  with $C^2$ boundary, $2\leq m\leq n-1$ and $2\leq k\leq C_{n-1}^{m-1}$.  Let  $u\in C^2(\overline{\Omega})$ is a k-admissible solution of the equation (\ref{eq})and $h$ is defined as in (\ref{h1}).
 Then, there exists $K^{*}$, a sufficiently large number depends only on $n$, $m$, $k$, $\min f$ and  $\Omega$, such that,
  \begin{eqnarray}
    F^{ij}h_{ij}\geq K_3^{\frac{1}{2}}(1+\mathcal{F}),\quad\text{in}\ \Omega_{\mu}\ (0<\mu\leq\widetilde{\mu}),\label{h3}
  \end{eqnarray}
  for any $K_3\geq K^{*}$, where $\widetilde{\mu}=\min\{\frac{1}{4K_3},\mu_0\}$, $\mu_0$ is mentioned in (\ref{d}).
\end{lemma}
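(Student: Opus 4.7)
The plan is to compute $F^{ij}h_{ij}$ directly and decompose it into a dominant positive normal-direction term and a lower-order error. From $h=-d+K_3 d^2$,
\begin{eqnarray*}
h_{ij} = 2K_3\,d_i d_j + (2K_3 d-1)\,d_{ij},
\qquad
F^{ij}h_{ij} = 2K_3\,F^{ij}d_i d_j + (2K_3 d-1)\,F^{ij}d_{ij},
\end{eqnarray*}
and in $\Omega_\mu$ with $\mu\leq\widetilde{\mu}\leq 1/(4K_3)$ the factor $|2K_3 d-1|\leq 1$. The error term is routine: $d\in C^2(\overline{\Omega_{\mu_0}})$ with $|D^2 d|\leq C(\Omega)$, and the positive definiteness of $(F^{ij})$ (so $|F^{ij}|\leq (F^{ii}F^{jj})^{1/2}$) gives $|F^{ij}d_{ij}|\leq C_1(\Omega)\,\mathcal{F}$; this is most transparent in a diagonal frame of $(u_{pq})$.

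The main task is the structural lower bound $F^{ij}d_i d_j\geq c_0(n,m,k)\,\mathcal{F}$. At any point, rotate coordinates so $(u_{pq})$ is diagonal; by (\ref{sw1}) of Proposition \ref{pro2}, $F^{ij}$ is then diagonal with $F^{ii}=\sum_{\bar\alpha\ni i}S_{k-1}(W|N_{\bar\alpha})\geq 0$, and since $|Dd|=1$,
\begin{eqnarray*}
F^{ij}d_i d_j = \sum_{i=1}^n F^{ii}\,\tilde d_i^{\,2} \;\geq\; \min_{1\leq i\leq n}F^{ii}.
\end{eqnarray*}
The heart of the lemma is thus the algebraic inequality
\begin{eqnarray*}
\min_{1\leq i\leq n}F^{ii} \;\geq\; c_0(n,m,k)\,\mathcal{F},
\end{eqnarray*}
valid for every $\lambda(W)\in\Gamma_k$ whenever $k\leq C_{n-1}^{m-1}$. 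Writing $F^{ii}/\mathcal{F}$ as the weighted proportion $m^{-1}(\sum_{\bar\alpha\ni i}a_{\bar\alpha})/(\sum_{\bar\alpha}a_{\bar\alpha})$ with nonnegative weights $a_{\bar\alpha}=S_{k-1}(W|N_{\bar\alpha})$, and combining the ordering relations (\ref{2.1}) of Proposition \ref{pro3} with $\lambda\in\Gamma_k$, one extracts a uniform positive lower bound for this proportion. The extremal configuration $\mu(D^2u)=(M,0,\ldots,0)$, which is $k$-admissible precisely when $k\leq C_{n-1}^{m-1}$, shows by direct computation that the optimal $c_0$ is finite and positive, and also that the restriction is sharp: if one allowed $k>C_{n-1}^{m-1}$, this very configuration would drive the ratio to zero.

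Finally, (\ref{2.10}) applied to $\lambda(W)\in\Gamma_k$ with $S_k(W)=f$ gives $\mathcal{F}\geq c_2(n,m,k)(\min f)^{(k-1)/k}=:c_3>0$, so $1+\mathcal{F}\leq(1+c_3^{-1})\mathcal{F}$. Putting everything together,
\begin{eqnarray*}
F^{ij}h_{ij} \;\geq\; (2K_3 c_0-C_1)\,\mathcal{F} \;\geq\; K_3^{1/2}(1+\mathcal{F})
\end{eqnarray*}
for all $K_3\geq K^*$, with $K^*$ depending only on $n,m,k,\min f,\Omega$. The principal obstacle in this plan is the structural inequality $\min_i F^{ii}\geq c_0\mathcal{F}$; this is where the hypothesis $k\leq C_{n-1}^{m-1}$ enters in an essential, non-negotiable way, and its verification (via the combinatorial structure of $W$ together with the admissibility cone) is the main work behind the lemma.
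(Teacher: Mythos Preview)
Your plan is genuinely different from the paper's and contains a real gap. The paper does \emph{not} attempt to bound $F^{ij}d_id_j$ from below directly. Instead it observes that for $k\le C_{n-1}^{m-1}$ the function $h$ itself is $k$-admissible: in principal coordinates $D^2h$ has the single large eigenvalue $2K_3$ in the normal direction, and this entry appears in exactly $C_{n-1}^{m-1}\ge k$ of the eigenvalues of $W[D^2h]$, forcing $\lambda_1(D^2h),\ldots,\lambda_k(D^2h)\ge K_3$ and hence $S_l(\lambda(D^2h))>0$ for $l\le k$. The same holds for $w=h-\tfrac{K_3}{2n}|x|^2$. Then concavity and degree-one homogeneity of $\widetilde F=S_k^{1/k}$ give
\[
\widetilde F^{ij}w_{ij}\ \ge\ \widetilde F(D^2u+D^2w)-\widetilde F(D^2u)\ \ge\ \widetilde F(D^2w)\ \ge\ \tfrac{K_3}{4},
\]
from which $\widetilde F^{ij}h_{ij}\ge \tfrac{K_3}{4n}(1+\widetilde{\mathcal F})$ and then (\ref{h3}) follow. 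So in the paper the hypothesis $k\le C_{n-1}^{m-1}$ is used to make $h$ admissible, not to control the ratio $F^{ii}/\mathcal F$.

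Your route hinges on the structural inequality $\min_i F^{ii}\ge c_0(n,m,k)\,\mathcal F$, but you do not prove it: the sentence ``combining the ordering relations (\ref{2.1}) \ldots one extracts a uniform positive lower bound'' is not an argument. Proposition~\ref{pro3} only gives positivity and monotonicity of the weights $a_{\bar\alpha}=S_{k-1}(W|N_{\bar\alpha})$; it does not prevent the $C_{n-1}^{m-1}$ weights indexed by $\{\bar\alpha\ni i\}$ from being an arbitrarily small fraction of the total. Your check on the configuration $\mu=(M,0,\ldots,0)$ shows the ratio is positive \emph{there}, but establishes nothing for general $\mu\in\Gamma_k^{(m)}$. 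Unless you can supply a genuine proof of $\min_i F^{ii}\ge c_0\mathcal F$ (which would require exploiting the constrained form $\lambda_{\bar\alpha}=\sum_{j\in\bar\alpha}\mu_j$ in a nontrivial combinatorial way), your argument is incomplete. The paper's concavity argument sidesteps this issue entirely and is substantially shorter.
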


\begin{proof}
   For $x_0\in\Omega_{\mu}$, there exists $y_0\in\partial\Omega$ such that $|x_0-y_0|=d(x_0)$. Then, in terms of a principal coordinate system at $y_0$, we have (see \cite{gt}, Lemma 14.17),
  \begin{eqnarray}
    [D^2d(x_0)]=-diag\big[\frac{\kappa_1}{1-\kappa_1d},\cdots,\frac{\kappa_{n-1}}{1-\kappa_{n-1}d},0\big],\label{d2}
  \end{eqnarray}
  and
  \begin{eqnarray}
    Dd(x_0)=-\nu(x_0)=(0,\cdots,0,-1).
  \end{eqnarray}
  Observe that
  \begin{eqnarray}
    [D^2h(x_0)]=diag\big[\frac{((1-2K_3d)\kappa_1}{1-\kappa_1d},\cdots,\frac{(1-2K_3d)\kappa_{n-1}}{1-\kappa_{n-1}d},2K_3\big].\label{h2}
  \end{eqnarray}
  Denote $\mu_i=\frac{(1-K_3d)\kappa_i}{1-\kappa_id}, \ \forall 1\leq i\leq n-1$, and $\mu_n=2K_3$ for simplicity. Then we define $\lambda(D^2h)=\{\mu_{i_1}+\cdots+\mu_{i_m}|\ 1\leq i_1<\cdots<i_m\leq n\}$ and assume $\lambda_1\geq\cdots\geq\lambda_{C_n^m}$, it is easy to see
  \begin{eqnarray}
    \lambda_k\geq\lambda_{C_{n-1}^{m-1}}\geq 2K_3+\sum_{l=1}^{m-1}\mu_{i_l}\geq K_3,\nonumber
  \end{eqnarray}
  if we choose $K_3$ sufficiently large and $\mu\leq\frac{1}{4K_3}$. It follows that, for $\forall 1\leq l\leq k$,
  \begin{eqnarray}
    S_l(\lambda)&\geq& K_3^l-C(n,m,\kappa)K_3^{l-1}\nonumber\\
    &\geq& \frac{K_3^l}{2},
  \end{eqnarray}
  such that $h$ is $k$-admissible. Similarly, $w=h-\frac{K_3}{2n}|x|^2$ is also $k$-admissible if we choose $K_3$ sufficiently large. By the concavity of $\widetilde{F}$, we have
  \begin{eqnarray}
    \widetilde{F}^{ij}w_{ij}&\geq&\widetilde{F}[D^2u+D^2w]-\widetilde{F}[D^2u]\nonumber\\
    &\geq&\widetilde{F}[D^2w]\nonumber\\
    &\geq&\frac{K_3}{4}.
  \end{eqnarray}
  Then we have
  \begin{eqnarray}
    \widetilde{F}^{ij}h_{ij}=\widetilde{F}^{ij}(h-\frac{K_3}{2n}|x|^2+\frac{K_3}{2n}|x|^2)_{ij}\geq\frac{K_3}{4n}(1+\widetilde{\mathcal{F}}).
  \end{eqnarray}
  If we choose $K_3\geq (\frac{4n\max f^{\frac{1}{k}}}{k\min f})^2$, then we have
  \begin{eqnarray}
    F^{ij}h_{ij}\geq K_3^{\frac{1}{2}}(1+\mathcal{F}).
  \end{eqnarray}
\end{proof}

Now we can use Lemma \ref{le5.1} to prove Theorem \ref{th5.1}

\begin{proof}[\bf Proof of Theorem \ref{th5.1}]
 We define
 \begin{eqnarray}
   P(x)=Du\cdot\nu-\phi(x,u),
 \end{eqnarray}
 with $\nu=-Dd$.
 Differentiate $P$ twice to  obtain

  \begin{eqnarray}
    P_{ij}&=&-u_{rij}d_{r}-u_{ri}d_{rj}-u_{rj}d_{ri}-u_{r}d_{rij}-D_{ij}\phi.\label{p2}
  \end{eqnarray}
 Then we obtain
  \begin{eqnarray}\label{4.23}
    F^{ij}P_{ij}&=& -F^{ij}(u_{rij}d_{r}+2u_{ri}d_{rj}+u_{r}d_{rij}-D_{ij}\phi) \nonumber\\
    &\leq& -F^{ii}u_{ii}d_{ii}+C_1(1+\mathcal{F}),\nonumber
  \end{eqnarray}
where $C_1=C_{1}(|u|_{C^{1}}, |\partial \Omega|_{C^{3}}, |\phi|_{C^{2}}, |f|_{C^{1}}, n)$.
From (\ref{5.1}) in Lemma \ref{th4.4}, we have
\begin{eqnarray}
  |u_{ii}|\leq C_0(1+N).\nonumber
\end{eqnarray}
It follows that
\begin{eqnarray}
  F^{ij}P_{ij}&\leq& C_2(1+N)(1+\mathcal{F}),
\end{eqnarray}
where $C_2=C_1+C_0|d|_{C^2}$.

On the other hand, using Lemma \ref{le5.1}, we have
\begin{eqnarray}
  (A+\frac{1}{2}N)F^{ij}h_{ij}&\geq&(A+\frac{1}{2}N)K_3^{\frac{1}{2}}(1+\mathcal{F})\nonumber\\
  &\geq&
  C_2(1+N)(1+\mathcal{F})\nonumber\\
  &\geq&F^{ij}P_{ij},
\end{eqnarray}
if we choose $K_3=K^{*}+(2C_2)^2+1$ and $A\geq C_2+1$.

On $\partial \Omega$, it is easy to see
\begin{eqnarray}
  P=0.
\end{eqnarray}
On $\partial\Omega_{\mu}\cap\Omega$, we have
\begin{eqnarray}
  |P|\leq C_3(|u|_{C^{1}}, |\phi|_{C^{0}})\leq (A+\frac{1}{2}N)\frac{\mu}{2},
\end{eqnarray}
if we take $A=\max \{\frac{2C_3}{\mu},C_2+1\}$.

Finally the maximum principle tells us that
\begin{eqnarray}
  -(A+\frac{1}{2}N)h(x)\leq P(x)\leq (A+\frac{1}{2}N)h(x),\quad\text{in}\quad\Omega_{\mu}.
\end{eqnarray}

Suppose $u_{\nu\nu}(y_{0})=\sup\limits_{\partial\Omega}u_{\nu\nu}>0$, we have
\begin{eqnarray}
  0&\geq&P_{\nu}(y_{0})-(A+\frac{1}{2} N)h_{\nu}\nonumber\\
  &=&u_{\nu\nu}-D_{\nu}\phi-(A+\frac{1}{2} N)\nonumber\\
  &\geq&u_{\nu\nu}(y_{0})-C(|u|_{C^{1}}, |\partial\Omega|_{C^{2}}, |\phi|_{C^{2}})-(A+\frac{1}{2} N).\nonumber
\end{eqnarray}
Then we get
\begin{eqnarray}
  \sup_{\partial\Omega}u_{\nu\nu}\leq C+\frac{1}{2} N.\label{5.7}
\end{eqnarray}
Similarly, doing this at the minimum point of $u_{\nu\nu}$, we have
\begin{eqnarray}
  \inf_{\partial\Omega}u_{\nu\nu}\leq C+\frac{1}{2} N.\label{5.8}
\end{eqnarray}
It follows that
\begin{eqnarray}
  \sup_{\partial\Omega}|u_{\nu\nu}|\leq C.\label{5.9}
\end{eqnarray}
Combining (\ref{5.9}) with (\ref{5.1}) in Lemma \ref{th4.4}, we obtain
\begin{eqnarray}
  \sup_{\overline{\Omega}}|D^2u|\leq C.
\end{eqnarray}
\end{proof}

\subsubsection{\bf Global second order estimate for Theorem \ref{th0}}

In this subsection we give a global second order estimate for the cases that $m\leq\frac{n}{2}$. We can settle more cases for $k\geq C_{n-1}^{m-1}$
than before, if $\Omega$ is strictly $(m,k_0)$-convex.
\begin{theorem}\label{th5.2}
Let $\Omega\subset\mathbb{R}^{n}$ be a strictly $(m,k_0)$-convex domain with $C^{4}$ boundary,
 $2\leq m\leq \frac{n}{2}$, and $k=C_{n-1}^{m-1}+k_0\leq \frac{n-m}{n}C_{n}^{m}$.
 Assume $f(x,z)\in C^{2}(\overline{\Omega}\times \mathbb{R})$ is positive
 and $\phi(x,z)\in C^{3}(\overline{\Omega}\times\mathbb{R})$ with $\phi_z-2\kappa_{min}<0$.
 If $u\in C^{4}(\Omega)\cap C^{3}(\overline{\Omega})$ is a $k$-admissible
  solution of the Neumann problem (\ref{eq5}).
 Then we have
\begin{eqnarray}
\sup_{\overline{\Omega}}|D^{2}u|\leq C,
\end{eqnarray}
where $C$ depends only on $n$, $m$, k, $|u|_{C^{1}(\overline{\Omega})}$,
$|f|_{C^{2}(\overline{\Omega}\times[-M_{0},M_{0}])}$, $\min f$, $|\phi|_{C^{3}(\overline\Omega\times[-M_{0},M_{0}])}$
and $\Omega$, where $M_{0}=\sup\limits_{\overline{\Omega}}|u|$.
\end{theorem}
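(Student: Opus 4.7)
The plan is to adapt the barrier argument from Theorem \ref{th5.1}. By Lemma \ref{th4.4}, it suffices to bound $N := \sup_{\partial\Omega}|u_{\nu\nu}|$; once this is done the global estimate follows from (\ref{5.1}). As in Theorem \ref{th5.1}, introduce $P(x) = Du\cdot\nu - \phi(x,u)$, which vanishes on $\partial\Omega$, and the barrier $h(x) = -d(x) + K_3 d(x)^2$ in $\Omega_\mu$ with $K_3$ to be chosen large. The computation of $F^{ij}P_{ij}$ leading to $F^{ij}P_{ij} \leq C_2(1+N)(1+\mathcal{F})$ in the proof of Theorem \ref{th5.1} does not depend on the range of $k$ and carries over verbatim. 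If we can establish the analog of Lemma \ref{le5.1}, namely
\[
F^{ij} h_{ij} \geq K_3^{1/2}(1+\mathcal{F}) \quad \text{in } \Omega_\mu
\]
for $K_3$ large, then the comparison $\pm(A+N/2) h \geq |P|$ on $\partial\Omega_\mu$ combined with the Hopf lemma at a boundary extremum of $u_{\nu\nu}$ gives $N \leq C$, and Lemma \ref{th4.4} then closes the estimate.

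The nontrivial ingredient is therefore the analog of Lemma \ref{le5.1} under strict $(m,k_0)$-convexity. Writing $D^2 h$ at $x_0 \in \Omega_\mu$ in principal coordinates at the nearest boundary point $y_0$, $D^2 h$ is diagonal with entries $\mu_i = (1-2K_3 d)\kappa_i/(1-\kappa_i d)$ for $i<n$ and $\mu_n = 2K_3$. The $C_n^m$ eigenvalues of $W(D^2 h)$ are then the admissible multi-index sums $\mu_{i_1}+\cdots+\mu_{i_m}$, and they split naturally into two groups: a \emph{large} group $\lambda'$ of $C_{n-1}^{m-1}$ entries, those whose multi-index contains $n$, each bounded below by $K_3$ for $K_3$ large and $\mu$ small; and a \emph{curvature} group $\lambda''$ of $C_{n-1}^m$ entries, the $m$-sums of $\mu_1,\ldots,\mu_{n-1}$. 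For small $d$, $\lambda''$ is uniformly close to the vector of $m$-sums of the principal curvatures at $y_0$, which by strict $(m,k_0)$-convexity lies in $\Gamma_{k_0}\subset\mathbb{R}^{C_{n-1}^m}$.

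Since $W(D^2 h)$ is block-diagonal with eigenvalue blocks $\lambda'$ and $\lambda''$, we have $S_l(\lambda) = \sum_{j=0}^{l} S_j(\lambda') S_{l-j}(\lambda'')$. For $1 \leq l \leq C_{n-1}^{m-1}$ the term $j=l$ is of order $K_3^l$ and dominates, so $S_l > 0$. For $l = C_{n-1}^{m-1}+s$ with $1 \leq s \leq k_0$, the term $j = C_{n-1}^{m-1}$ gives $S_{C_{n-1}^{m-1}}(\lambda')\cdot S_s(\lambda'') \gtrsim K_3^{C_{n-1}^{m-1}} c_s$ with $c_s > 0$ coming from the $(m,k_0)$-convexity, while every other term is strictly lower order in $K_3$; hence $S_l > 0$ for $1 \leq l \leq k = C_{n-1}^{m-1}+k_0$, and $S_k^{1/k}(W(D^2 h)) \to \infty$ as $K_3 \to \infty$. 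Subtracting a small quadratic $\frac{K_3}{2n}|x|^2$ and applying concavity of $S_k^{1/k}$ exactly as in the proof of Lemma \ref{le5.1} then yields $F^{ij} h_{ij} \geq K_3^{1/2}(1+\mathcal{F})$ for $K_3 \geq K^*$.

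The main obstacle will be the expansion step above: since $\lambda''$ is only guaranteed to be $k_0$-admissible (not fully admissible in $\mathbb{R}^{C_{n-1}^m}$), one must single out which terms in the decomposition $\sum_j S_j(\lambda')S_{l-j}(\lambda'')$ are unambiguously positive and verify that these positive principal contributions grow in $K_3$ strictly faster than the indefinite remainders. This is precisely where the hypothesis $k = C_{n-1}^{m-1}+k_0 \leq \frac{n-m}{n}C_n^m$ (equivalently $k_0 \leq \frac{n-2m}{n}C_n^m$, forcing $m \leq n/2$) enters: it keeps the required curvature index $s \leq k_0$ within the admissibility level supplied by the boundary, so that the necessary $S_s(\lambda'')$ are automatically positive.
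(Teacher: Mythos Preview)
Your proposal has a genuine gap at the step where you claim to subtract the quadratic $\tfrac{K_3}{2n}|x|^2$ ``exactly as in the proof of Lemma \ref{le5.1}.'' In Lemma \ref{le5.1} this works because $k\le C_{n-1}^{m-1}$, so every $S_l(\lambda(D^2w))$ for $l\le k$ is dominated by the contribution of the large block $\lambda'$ alone. In the present regime $k=C_{n-1}^{m-1}+k_0$, however, the subtraction shifts the curvature block to $\lambda''-\tfrac{m K_3}{n}\to -\infty$ entrywise, so $S_s(\lambda'')$ is not controlled (indeed it alternates sign with $s$), and $w=h-\tfrac{K_3}{2n}|x|^2$ is \emph{not} $k$-admissible for large $K_3$. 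The most one can subtract while keeping $\lambda''$ in $\Gamma_{k_0}$ is a fixed small quadratic $k_3|x|^2$, which yields only
\[
F^{ij}h_{ij}\ \ge\ k_3\,(1+\mathcal F)
\]
with $k_3$ small and \emph{independent of $K_3$} (this is the content of the paper's Lemma \ref{le5.2}). The simple barrier $P=Du\cdot\nu-\phi$ then fails: to absorb $F^{ij}P_{ij}\le C_2(1+N)(1+\mathcal F)$ one would need $(A+\tfrac{N}{2})k_3\ge C_2(1+N)$, which is impossible for large $N$ once $k_3<2C_2$.

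This is precisely why the paper does \emph{not} reuse the argument of Theorem \ref{th5.1}. Instead it follows Ma--Qiu and builds the barriers
\[
P(x)=\bigl(1-\beta h\bigr)\bigl(Du\cdot Dh-\psi\bigr)-(A+\sigma N)h,\qquad
\overline P(x)=\bigl(1-\beta h\bigr)\bigl(Du\cdot Dh-\psi\bigr)+(A+\sigma N)h,
\]
(Lemmas \ref{le4.3}--\ref{le4.4}). The extra factor $g=1-\beta h$ produces, via the first-order condition $P_i=0$, a term $-\tfrac{\beta}{2n}F^{11}u_{11}$ with $u_{11}\asymp \sigma N$; showing $F^{11}\ge c\,\mathcal F$ then supplies the missing $N$-proportional positivity. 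Establishing $F^{11}\ge c\,\mathcal F$ requires a five-case analysis using Propositions \ref{pro5} and \ref{pro7}, and it is exactly here (Case 5) that the hypotheses $m\le n/2$ and $k\le \tfrac{n-m}{n}C_n^m$ are actually used. Your outline does not touch this mechanism; without it the argument does not close.
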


First, we prove the following Lemma.
  \begin{lemma}\label{le5.2}
  Let $\Omega\subset\mathbb{R}^{n}$ be a strictly $(m,k_0)$-convex domain with $C^{4}$ boundary,  $2\leq m\leq n-1$,
  and $k=C_{n-1}^{m-1}+k_0\leq \frac{n-m}{n}C_{n}^{m}$, $k_0$ a positive integer.  Assume  $u\in C^2(\overline{\Omega})$ is a k-admissible solution of the equation (\ref{eq})
  and $h$ is defined as in (\ref{h1}).
 Then, there exists $K_3$, a sufficiently large number depends only on $n$, $m$, $k$, $\min f$ and  $\Omega$, such that,
  \begin{eqnarray}
    F^{ij}h_{ij}\geq k_3(1+\mathcal{F}),\quad\text{in}\ \Omega_{\mu}\ (0<\mu\leq\widetilde{\mu}),\label{h4}
  \end{eqnarray}
  for $k_3$, a sufficiently
  small number depends only on $n$, $m$, $k$, and  $\Omega$. Here $\widetilde{\mu}=\min\{\frac{1}{4K_3},\mu_0\}$.
\end{lemma}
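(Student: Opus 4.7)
My plan is to adapt the proof of Lemma \ref{le5.1} by exploiting the strict $(m,k_0)$-convexity of $\partial\Omega$ to supply the additional $k_0 = k - C_{n-1}^{m-1}$ levels of admissibility that can no longer come from the normal direction alone. As in Lemma \ref{le5.1}, fix $x_0 \in \Omega_{\mu}$ and work in a principal coordinate frame at the nearest boundary point $y_0$, so $D^2 h(x_0)$ is diagonal with tangential entries $\mu_i = (1-2K_3 d)\kappa_i/(1-\kappa_i d)$ for $i = 1,\ldots,n-1$ and normal entry $\mu_n = 2K_3$. The $C_n^m$ eigenvalues of $W[D^2 h]$ partition into $\lambda^{(1)}$, the $C_{n-1}^{m-1}$ sums containing $\mu_n$ (each at least $2K_3 - C(\Omega)$), and $\lambda^{(2)}$, the $C_{n-1}^m$ sums of purely tangential $\mu_i$'s. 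Writing $\mu_i = c_i\kappa_i$ with $c_i = c_i(d,K_3) \in [c_1(\Omega), c_2(\Omega)]$ for $d \le \widetilde{\mu}$, a continuity argument shows that $\lambda^{(2)}$ remains in $\Gamma_{k_0}$ with a uniform positive margin inherited from the strict $(m,k_0)$-convexity of $\partial\Omega$.

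Next I would establish the $k$-admissibility of $D^2 h$ through the splitting
\[
S_l(\lambda) \;=\; \sum_{j=0}^{l} S_j(\lambda^{(1)})\, S_{l-j}(\lambda^{(2)}),\qquad \lambda = \lambda^{(1)}\sqcup\lambda^{(2)}.
\]
For $l \le C_{n-1}^{m-1}$ the $j=l$ summand contributes at least $\binom{C_{n-1}^{m-1}}{l}(2K_3)^l$ modulo strictly lower-order-in-$K_3$ corrections, which dominates. For $l = C_{n-1}^{m-1}+s$ with $1\le s\le k_0$ the leading power of $K_3$ comes from $j = C_{n-1}^{m-1}$, giving $S_{C_{n-1}^{m-1}}(\lambda^{(1)})\,S_s(\lambda^{(2)}) \ge c\,(2K_3)^{C_{n-1}^{m-1}}$ with $c>0$ from $\lambda^{(2)} \in \Gamma_{k_0}$. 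Taking $K_3$ large (depending on $n,m,k,\min f,\Omega$) absorbs the cross terms and simultaneously yields a quantitative lower bound $\widetilde{F}(D^2 h) = S_k(\lambda)^{1/k} \ge c_1(K_3,\Omega)$.

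To produce the $\mathcal{F}$ factor I would set $w = h - \epsilon|x|^2$ with $\epsilon > 0$ small (depending only on the $(m,k_0)$-convexity margin of $\partial\Omega$), so that $w$ remains $k$-admissible and $\widetilde{F}(D^2 w) \ge c_1/2$. Concavity plus positive $1$-homogeneity of $\widetilde{F} = S_k^{1/k}$ give
\[
\widetilde{F}^{ij}(D^2u)\, w_{ij} \;\ge\; \widetilde{F}(D^2 u + D^2 w) - \widetilde{F}(D^2 u) \;\ge\; \widetilde{F}(D^2 w) \;\ge\; c_1/2.
\]
Combined with $h_{ij} = w_{ij} + 2\epsilon\delta_{ij}$ and the universal lower bound $\widetilde{\mathcal{F}} \ge c(n,m,k) > 0$ coming from (\ref{2.10}), this gives $\widetilde{F}^{ij} h_{ij} \ge c_2(1+\widetilde{\mathcal{F}})$. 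Using $F^{ij} = kf^{(k-1)/k}\widetilde{F}^{ij}$ and $\mathcal{F} = kf^{(k-1)/k}\widetilde{\mathcal{F}}$, I obtain $F^{ij}h_{ij} \ge c_2\bigl(kf^{(k-1)/k}+\mathcal{F}\bigr)$; enlarging $K_3$ depending on $\min f$ drives $c_2\cdot k(\min f)^{(k-1)/k}$ above an absolute constant, so the $\min f$ dependence is absorbed into $K_3$ and the target $F^{ij}h_{ij} \ge k_3(1+\mathcal{F})$ follows with $k_3 = k_3(n,m,k,\Omega)$.

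The main obstacle is the admissibility step for $C_{n-1}^{m-1}<l\le k$: there the large normal contribution $\mu_n = 2K_3$ used in Lemma \ref{le5.1} no longer suffices on its own, and one is forced to rely on the tangential factor $S_s(\lambda^{(2)})$, which is positive precisely because of the strict $(m,k_0)$-convexity of $\partial\Omega$. Quantifying this positivity so that $c$ above is a definite geometric constant, and controlling all the lower-order-in-$K_3$ cross terms in the expansion of $S_l(\lambda)$, is the delicate bookkeeping that drives the proof.
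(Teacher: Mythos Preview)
Your plan is essentially the paper's own argument: both fix a point in $\Omega_\mu$, diagonalise $D^2h$ in a principal frame, split the $C_n^m$ eigenvalues into the $C_{n-1}^{m-1}$ sums containing the large normal entry $2K_3$ and the $C_{n-1}^m$ purely tangential sums, use the strict $(m,k_0)$-convexity of $\partial\Omega$ to keep the tangential block in $\Gamma_{k_0}$, verify that $h$ and $w=h-\epsilon|x|^2$ are $k$-admissible, and then exploit the concavity and $1$-homogeneity of $S_k^{1/k}$ to produce the lower bound. Your product decomposition $S_l(\lambda)=\sum_j S_j(\lambda^{(1)})S_{l-j}(\lambda^{(2)})$ is in fact a cleaner way to organise the same estimate the paper writes out term by term.

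One small point of bookkeeping: after you obtain $\widetilde F^{ij}h_{ij}\ge c_1(K_3)/2 + 2\epsilon\,\widetilde{\mathcal F}$, do not collapse these into a single $c_2(1+\widetilde{\mathcal F})$, since that forces $c_2\le 2\epsilon$ and you then lose the $K_3$-growth needed in the constant term. Keep the two pieces separate (as the paper does): multiplying by $kf^{(k-1)/k}$ gives $F^{ij}h_{ij}\ge \tfrac{k}{2}f^{(k-1)/k}c_1(K_3)+2\epsilon\,\mathcal F$, and now enlarging $K_3$ (depending on $\min f$) makes the first term exceed $2\epsilon$, so $k_3:=2\epsilon$ works and depends only on $n,m,k,\Omega$. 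With that fix, your outline matches the paper's proof.
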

\begin{proof}

 For $x_0\in\Omega_{\mu}$, there exists $y_0\in\partial\Omega$ such that $|x_0-y_0|=d(x_0)$.
  As before, in terms of a principal coordinate system at $y_0$, we have,
  \begin{eqnarray}
    [D^2h(x_0)]=diag\big[\frac{((1-2K_3d)\kappa_1}{1-\kappa_1d},\cdots,\frac{(1-2K_3d)\kappa_{n-1}}{1-\kappa_{n-1}d},2K_3\big].
  \end{eqnarray}
  Denote $\mu_i=\frac{(1-K_3d)\kappa_i}{1-\kappa_id}, \ \forall 1\leq i\leq n-1$, and $\mu_n=2K_3$ for simplicity.
  Then we define $\lambda(D^2h)=\{\mu_{i_1}+\cdots+\mu_{i_m}|\ 1\leq i_1<\cdots<i_m\leq n\}$ and assume $\lambda_1\geq\cdots\geq\lambda_{C_n^m}$, it is easy to see
  \begin{eqnarray}
  \lambda_{C_{n-1}^{m-1}}\geq 2K_3+\sum_{l=1}^{m-1}\mu_{i_l}\geq \frac{3}{2}K_3,\nonumber
  \end{eqnarray}
  if we choose $K_3$ sufficiently large and $\mu\leq\frac{1}{4K_3}$.
  Then we denote $\lambda'=(\lambda_1,\cdots,\lambda_{C_{n-1}^{m-1}})$ and $\lambda(\kappa)=(\lambda_{C_{n-1}^{m-1}+1},\cdots,\lambda_{C_n^m})$.
  Since $\kappa\in\Gamma_{k_0}^{(m)}$, we have $\lambda(\kappa)\in\Gamma_{k_0}$ and $S_{k_0}(\lambda(\kappa))\geq b_0>0$.
  Then for $\forall\ 1\leq l\leq C_{n-1}^{m-1}$, we have
  \begin{eqnarray}
    S_l(\lambda)&\geq& S_l(\lambda')-c(n,m,k,\kappa)K_3^{l-1}\nonumber\\
    &\geq& K_3^l>0,
  \end{eqnarray}
  and, for $\forall\ l=C_{n-1}^{m-1}+l_0\leq k$, $l_0\leq k_0$,
  \begin{eqnarray}
    S_l(\lambda)&\geq& (\frac{3K_3}{2})^{C_{n-1}^{m-1}}S_{l_0}(\lambda(\kappa))-c(n,m,k,\kappa)K_3^{C_{n-1}^{m-1}-1}\nonumber\\
    &\geq& b_0^{\frac{l_0}{k_0}}(\frac{3K_3}{4})^{C_{n-1}^{m-1}}>0,
  \end{eqnarray}
  if we choose $K_3$ sufficiently large.
  It implies that $h$ is $k$-admissible. Similarly, $w=h-k_3|x|^2$
  is also $k$-admissible if $k_3$ sufficiently small.
  By the concavity of $\widetilde{F}$, we have
  \begin{eqnarray}
    \widetilde{F}^{ij}w_{ij}&\geq&\widetilde{F}[D^2u+D^2w]-\widetilde{F}[D^2u]\nonumber\\
    &\geq&\widetilde{F}[D^2w]\nonumber\\
    &\geq& \frac{1}{2}b_0^{\frac{1}{k}}(\frac{3K_3}{4})^{\gamma},
  \end{eqnarray}
  where $\gamma=\frac{C_{n-1}^{m-1}}{k}\leq 1$.

  Then we have
  \begin{eqnarray}
    \widetilde{F}^{ij}h_{ij}=\widetilde{F}^{ij}(h-k_3|x|^2+k_3|x|^2)_{ij}
    \geq \frac{1}{2}b_0^{\frac{1}{k}}(\frac{3K_3}{4})^{\gamma}+k_3\widetilde{\mathcal{F}},
  \end{eqnarray}
  for a large $K_3$.
  If we choose $K_3\geq 2(\frac{k_3\max f^{\frac{1}{k}}}{kb_0{\frac{1}{k}}\min f})^{\frac{1}{\gamma}}$, then we have
  \begin{eqnarray}
    F^{ij}h_{ij}\geq k_3(1+\mathcal{F}).
  \end{eqnarray}
\end{proof}

Following the line of Qiu and Ma \cite{mq}, we construct the sub barrier function as
\begin{eqnarray}
  P(x):=g(x)(Du\cdot Dh(x)-\psi(x))-G(x).
\end{eqnarray}
with
\begin{eqnarray}
  g(x)&:=&1-\beta h(x),\nonumber\\
  G(x)&:=&(A+\sigma N)h(x),\nonumber\\
  \psi(x)&:=& \phi(x, u)|Dh|(x),\nonumber
\end{eqnarray}
where $K_3$ is the constant in the following Lemma \ref{le4.3}, and $A$, $\sigma$,
 $\beta$ are positive constants to be determined.
We have the following lemma.
\begin{lemma}\label{le4.3}
Fix $\sigma$, if we select $\beta$ large, $\mu$ small, and $A$ large, then
  \begin{eqnarray}
    P\geq0,\quad\text{in}\quad\Omega_{\mu}.
  \end{eqnarray}
  Furthermore, we have
  \begin{eqnarray}\label{4.21}
    \sup_{\partial \Omega}u_{\nu\nu}\leq C+\sigma N,
  \end{eqnarray}
  where constant $C$ depends  only on $|u|_{C^{1}}$, $|\partial\Omega|_{C^{2}}$ $|f|_{C^2}$ and $|\phi|_{C^{2}}$.
\end{lemma}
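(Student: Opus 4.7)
The plan is to first show $P \geq 0$ in $\Omega_\mu$ by a maximum/comparison argument, and then extract the boundary estimate on $u_{\nu\nu}$ via the Hopf lemma. Write $Q(x) = Du\cdot Dh - \psi$, so that $P = gQ - G$. On $\partial\Omega$ one has $h = 0$, $Dh = \nu$, $|Dh| = 1$, so $Q = u_\nu - \phi = 0$ and $G = 0$; hence $P \equiv 0$ on $\partial\Omega$. On the interior boundary $\partial\Omega_\mu\cap\Omega$, $h \approx -\mu(1 - K_3\mu)$ is bounded away from zero, $g$ and $Q$ are controlled by $|u|_{C^1}$ and $|\phi|_{C^0}$, so $P \geq -C(1+\beta\mu) + (A+\sigma N)\mu/2 \geq 0$ once $A$ is taken large depending on $\beta$, $\mu$, $C$.

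The heart of the proof is ruling out an interior negative minimum of $P$ in $\Omega_\mu$. Suppose $x_0$ were such a point; then $DP(x_0) = 0$ and $F^{ij}P_{ij}(x_0) \geq 0$ by ellipticity. Rotate so that $(u_{ij})$ is diagonal at $x_0$, making $W$ and $F^{ij}$ diagonal as well. A direct computation yields
\begin{eqnarray*}
F^{jj} P_{jj} = -(A+\sigma N + \beta Q)\, F^{jj}h_{jj} + 2 F^{jj} g_j Q_j + g\,F^{jj}Q_{jj},
\end{eqnarray*}
with $Q_{jj} = u_{rjj}h_r + 2u_{jj}h_{jj} + u_r h_{rjj} - \psi_{jj}$. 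The third-derivative term is handled by differentiating the equation: $F^{jj}u_{rjj} = D_r\widetilde f$, which is bounded. The cross term $2gF^{jj}u_{jj}h_{jj}$ is controlled using the reduction $|u_{jj}| \leq C_0(1+N)$ from Lemma \ref{th4.4}, giving $O((1+N)\mathcal{F})$. Since $P(x_0) < 0$ forces $Q(x_0) < 0$ with $|Q| \leq 2(A+\sigma N)\mu/g \leq 2(A+\sigma N)\mu$, the coefficient $A+\sigma N + \beta Q$ of $F^{jj}h_{jj}$ stays above $(A+\sigma N)/2$ provided $\mu \leq 1/(4\beta)$. Applying Lemma \ref{le5.2} yields $-(A+\sigma N+\beta Q)F^{jj}h_{jj} \leq -\tfrac12(A+\sigma N)k_3(1+\mathcal F)$, which is the crucial negative contribution.

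To absorb the $O((1+N)\mathcal{F})$ cross terms we exploit $\beta$. Using $DP(x_0) = 0$ one obtains the first-order identity $g Q_j = h_j(\beta Q + A + \sigma N)$, so
\begin{eqnarray*}
2 F^{jj} g_j Q_j = -\tfrac{2\beta(\beta Q + A+\sigma N)}{g}\sum_j F^{jj} h_j^2,
\end{eqnarray*}
which is negative and of order $-\beta(A+\sigma N)\sum_j F^{jj}h_j^2$. Near $\partial\Omega$ the vector $Dh$ is essentially normal, so $\sum_j F^{jj}h_j^2$ is of the same order as the diagonal entry $F^{nn}$ that multiplies the large curvature $h_{nn} = 2K_3$ in the bad cross term $2gF^{jj}u_{jj}h_{jj}$. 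Choosing $\beta$ large (depending on $\sigma$, $K_3$, $C_0$) makes the coefficient $\beta\sigma$ dominate $K_3 C_0$, thereby killing the $N$-linear cross terms; after that, picking $\mu$ small (depending on $\beta$, $K_3$) and finally $A$ large (depending on all prior choices) gives $F^{jj}P_{jj}(x_0) < 0$, contradicting the minimum. Hence $P \geq 0$ throughout $\Omega_\mu$.

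With $P \geq 0$ in $\Omega_\mu$ and $P = 0$ on $\partial\Omega$, the Hopf-lemma-style inequality $P_\nu(y_0) \leq 0$ holds at any boundary point $y_0$. A direct computation on $\partial\Omega$ yields
\begin{eqnarray*}
P_\nu = g(u_{\nu\nu} + u_i h_{i\nu} - \psi_\nu) - (A+\sigma N)h_\nu = u_{\nu\nu} - (A+\sigma N) + (\text{bounded terms}),
\end{eqnarray*}
because $g = 1$, $h_\nu = 1$, $Q = 0$ on $\partial\Omega$. At the point $y_0$ where $u_{\nu\nu}$ attains its boundary maximum $N$, $P_\nu(y_0) \leq 0$ gives $u_{\nu\nu}(y_0) \leq (A+\sigma N) + C$, that is, $\sup_{\partial\Omega} u_{\nu\nu} \leq C + \sigma N$. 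The main obstacle is the careful accounting in the interior step: the cross term $2gF^{jj}u_{jj}h_{jj}$ a priori grows like $K_3(1+N)\mathcal F$ (through $h_{nn} = 2K_3$), and without the $\beta$-parameter it would overwhelm the negative Lemma \ref{le5.2} term; the use of the first-order condition to produce the $-\beta(A+\sigma N)\sum F^{jj}h_j^2$ term is what saves the argument.
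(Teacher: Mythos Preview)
Your outline has the right skeleton (boundary values, first-order condition, Hopf lemma) but there is a genuine gap at the crucial interior step. After rotating so that $(u_{ij})$ is diagonal at $x_0$, you correctly derive the good term
\[
2F^{jj}g_jQ_j=-\tfrac{2\beta}{g}(\beta Q+A+\sigma N)\sum_j F^{jj}h_j^2,
\]
and the bad cross term $2gF^{jj}u_{jj}h_{jj}$, which is of size $O\big((1+N)\mathcal F\big)$ via Lemma~\ref{th4.4}. To absorb the latter you need $\beta\sigma\sum_j F^{jj}h_j^2\gtrsim\mathcal F$. You assert that ``$\sum_j F^{jj}h_j^2$ is of the same order as the diagonal entry $F^{nn}$ that multiplies the large curvature $h_{nn}=2K_3$'', but this conflates two different frames: the principal-curvature frame in which $D^2h$ is diagonal, and the frame diagonalizing $(u_{ij})$. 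After the rotation at $x_0$, the direction of $Dh$ bears no relation to any particular eigen-direction of $(u_{ij})$; all you know is $\sum_j h_j^2\geq\tfrac14$, so $\sum_j F^{jj}h_j^2\geq\tfrac{1}{4n}F^{i_0i_0}$ for \emph{some} index $i_0$. There is no a priori reason $F^{i_0i_0}$ is comparable to $\mathcal F$ --- indeed, if $u_{i_0i_0}$ happens to be the largest eigenvalue of $D^2u$, $F^{i_0i_0}$ can be much smaller than $\mathcal F$.

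The paper closes this gap by a substantial five-case analysis. From $P_i(x_0)=0$ one extracts, for the index $i_0$ with $h_{i_0}^2\geq\tfrac{1}{4n}$, that $u_{i_0i_0}\in[\tfrac{A}{3}+\tfrac{2\sigma N}{3},\,\tfrac{4A}{3}+\sigma N]$; this index is relabeled $1$. The cases then split according to the sign and size of $\lambda_{m_1}=\mu_1+\sum_{i=n-m+2}^n\mu_i$, of $u_{nn}$, and of $\lambda_{C_n^m}$, and in each case a different structural inequality (Propositions~\ref{pro5}, \ref{pro6}, or \ref{pro7}) is invoked to prove $F^{11}\geq c\,\mathcal F$ for a constant $c$ depending on $n,m,k,\sigma$. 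Only then does choosing $\beta\gtrsim c^{-1}\sigma^{-1}$ yield $F^{ij}P_{ij}(x_0)<0$. This case analysis --- in particular the use of Proposition~\ref{pro7}, which is specific to $m\geq 2$ and requires $k\leq\tfrac{n-m}{n}C_n^m$ --- is the real content of the lemma and cannot be bypassed by the heuristic you gave. (A minor point: your bound ``$|Q|\leq 2(A+\sigma N)\mu/g$'' is unjustified; what you actually need and have is $|Q|\leq C(|u|_{C^1},|\phi|_{C^0})$, which suffices once $A\geq 2\beta C$.)
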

\begin{proof}
  We assume $P(x)$ attains its minimum point $x_{0}$ in the interior of $\Omega_{\mu}$. Differentiate $P$ twice to  obtain
  \begin{eqnarray}
    P_{i}=g_{i}(u_{r}h_{r}-\psi)+g(u_{ri}h_{r}+u_{r}h_{ri}-\psi_{i})-G_{i},
  \end{eqnarray}
  and
  \begin{eqnarray}
    P_{ij}&=&g_{ij}(u_{r}h_{r}-\psi)+g_{i}(u_{rj}h_{r}+u_{r}h_{rj}-\psi_{j})\\
    &&+g_{j}(u_{ri}h_{r}+u_{r}h_{ri}-\psi_{i})+g(u_{rij}h_{r}+u_{ri}h_{rj}\nonumber\\
    &&+u_{rj}h_{ri}+u_{r}h_{rij}-\psi_{ij})-G_{ij}.\nonumber
  \end{eqnarray}

  By a rotation of coordinates, we may assume that $(u_{ij})_{n\times n}$ is diagonal at $x_{0}$, so are $W$ and $(F^{ij})_{n\times n}$. Denote $\mathcal{F}=\sum\limits_{i=1}^{n}F^{ii}$ the trace of $(F^{ij})_{n\times n}$. We choose $\mu<\min\{\frac{1}{4K_3},\frac{1}{\beta}\}$ so that $|\beta h|\leq \beta\frac{\mu}{2}\leq\frac{1}{2}$. It follows that
  \begin{eqnarray}\label{4.22}
    1\leq g\leq\frac{3}{2}.
  \end{eqnarray}
  By a straight computation we obtain
  \begin{eqnarray}\label{5.23}
    F^{ij}P_{ij}&=&F^{ii}g_{ii}(u_{r}h_{r}-\psi)+2F^{ii}g_{i}(u_{ii}h_{i}+u_{r}h_{ri}-\psi_{i})\nonumber\\
     &&+g F^{ii}(u_{rii}h_{r}+2u_{ii}h_{ii}+u_{r}h_{rii}-\psi_{ii})-(A+\sigma N)F^{ii}h_{ii} \nonumber\\
    &\leq&\big( \beta C_1-(A+\sigma N)k_{3}\big)(\mathcal{F}+1)\\
   && -2\beta F^{ii}u_{ii}h_{i}^{2}+2gF^{ii}u_{ii}h_{ii},\nonumber
  \end{eqnarray}
where $C_1=C_{1}(|u|_{C^{1}}, |\partial \Omega|_{C^{3}}, |\phi|_{C^{2}}, |f|_{C^{1}}, n)$.

  We divide indexes $I=\{1, 2, \cdots, n\}$ into two sets in the following way,
  \begin{eqnarray}
    B=\{i\in I | |\beta h_{i}^{2}|<\frac{k_{1}}{4}\},\nonumber\\
    G =I\backslash B =\{i\in I | |\beta h_{i}^{2}|\geq\frac{k_{1}}{4}\},\nonumber
  \end{eqnarray}
  where $k_1$ is a positive number depends on $|\partial\Omega|_{C^2}$ and $K_3$ such that $|D^2h|_{C^0}\leq\frac{k_1}{2}$.
  For $i\in G$, by $P_{i}(x_{0})=0$, we get
  \begin{eqnarray}
    u_{ii}=\frac{A+\sigma N}{g}+\frac{\beta(u_{r}h_{r}-\psi)}{g}-\frac{u_{r}h_{ri}-\psi_{i}}{h_{i}}.
  \end{eqnarray}
  Because $|h_{i}^{2}|\geq\frac{k_{1}}{4\beta}$ and (\ref{4.22}), we have
  \begin{eqnarray}
    |\frac{\beta(u_{r}h_{r}-\psi)}{g}-\frac{u_{r}h_{ri}-\psi_{i}}{h_{i}}|\leq\beta C_{2}(k_2, |u|_{C^{1}},
    |\partial \Omega|_{C^{2}}, |\psi|_{C^{1}}).\nonumber
  \end{eqnarray}
  Then let $A\geq3\beta C_{2}$, we have
  \begin{eqnarray}
    \frac{A}{3}+\frac{2\sigma N}{3}\leq u_{ii}\leq \frac{4A}{3}+\sigma N,
  \end{eqnarray}
  for $\forall i\in G$. We choose $\beta\geq4nk_{1}+1$ to let $|h_{i}^{2}|\leq\frac{1}{4n}$ for $i\in B$. Because $\frac{1}{2}\leq|Dh|\leq 2$, there is a $i_{0}\in G$, say $i_{0}=1$, such that
  \begin{eqnarray}
    h_{1}^{2}\geq \frac{1}{4n}.
  \end{eqnarray}

  We have
  \begin{eqnarray}\label{4.30}
    -2\beta \sum_{i\in I}F^{ii}u_{ii}h_{i}^{2}&=&-2\beta\sum_{i\in G}F^{ii}u_{ii}h_{i}^{2}-2\beta \sum_{i\in B}F^{ii}u_{ii}h_{i}^{2}\\
    &\leq& -2\beta F^{11}u_{11}h_{1}^{2}-2\beta\sum_{u_{ii}<0}F^{ii}u_{ii}h_{i}^{2}\nonumber\\
    &\leq&-\frac{\beta F^{11}u_{11}}{2n}-\frac{k_{1}}{2}\sum_{u_{ii}<0}F^{ii}u_{ii}.\nonumber
  \end{eqnarray}
  and
  \begin{eqnarray}\label{4.31}
    2g\sum_{i\in I}F^{ii}u_{ii}h_{ii}&=&2g\sum_{u_{ii}\geq 0}F^{ii}u_{ii}h_{ii}+2g\sum_{u_{ii}<0}F^{ii}u_{ii}h_{ii}\\
    &\leq&k_{1}\sum_{u_{ii}\geq0}F^{ii}u_{ii}-\frac{k_{1}}{2}\sum_{u_{ii}<0}F^{ii}u_{ii}.\nonumber
  \end{eqnarray}
  Plug (\ref{4.30}) and (\ref{4.31}) into (\ref{5.23}) to get
  \begin{eqnarray}\label{4.32}
    F^{ii}P_{ij}&\leq&\big(\beta C_{1}-(A+\sigma N)k_{3}\big)(\mathcal{F}+1)-\frac{\beta}{2n}F^{11}u_{11}\nonumber\\
    &&-k_{1}\sum_{u_{ii}<0}F^{ii}u_{ii}
    +k_{1}\sum_{u_{ii}\geq0}F^{ii}u_{ii}.
  \end{eqnarray}

  Denote $u_{22}\geq\cdots\geq u_{nn}$, and $\mu_i=u_{ii}\ (1\leq i\leq n)$ for simplicity. We also denote
  \begin{eqnarray}
    \lambda_{1}&=&\max\limits_{1\in\overline{\alpha}}\{w_{\overline{\alpha}\overline{\alpha}}\}
    =\mu_1+\sum_{i=2}^{m}\mu_i,\nonumber\\
    \lambda_{m_1}&=&\min\limits_{1\in\overline{\alpha}}\{w_{\overline{\alpha}\overline{\alpha}}\}=
     \mu_1+\sum_{i=n-m+2}^{n}\mu_i,\nonumber
  \end{eqnarray}
   and $\lambda_{2}\geq\cdots\geq\lambda_{C_{n}^{m}}$ the eigenvalues of  the matrix $W$.
   We may assume $N>1$, then from (\ref{5.1}) we see that
  \begin{eqnarray}\label{4.34}
    |u_{ii}|\leq 2C_{0}N,\quad \forall i\in I.
  \end{eqnarray}
  Then
  \begin{eqnarray}\label{4.35}
    \lambda_{i}\leq2mC_{0}N\leq \frac{3mC_{0}}{\sigma}u_{11},\quad \forall 1\leq i\leq C_{n}^{m}.
  \end{eqnarray}

\medskip

  We will consider the following cases.

\medskip

  $\mathbf{Case 1}$.  $\lambda_{m_1}\leq0$.

    It follows from (\ref{2.5}) that
  \begin{eqnarray}
    F^{11}
    &>&S_{k-1}(\lambda|m_1)\nonumber\\
      &\geq&\frac{1}{C_{n}^{m}-k+1}\sum_{i=1}^{m}S_{k-1}(\lambda|i)=\frac{1}{m(C_{n}^{m}-k+1)}\mathcal{F}.\nonumber
  \end{eqnarray}
  Then we have
  \begin{eqnarray}
    F^{ij}P_{ij}&\leq&\big(\beta C_{1}-(A+\sigma N)k_{3}\big)(\mathcal{F}+1)+2C_{0}k_{1}N\mathcal{F}\nonumber\\
    &&-\frac{\beta}{2nm(C_{n}^{m}-k+1)}(\frac{A}{3}+\frac{2\sigma N}{3})\mathcal{F}\nonumber\\
    &<&0.
  \end{eqnarray}
  if we choose $\beta>\frac{6nmk_{1}C_{0}(C_{n}^{m}-k+1)}{\sigma}$ and $A>\frac{\beta C_{1}}{k_{3}}$.

\medskip

  $\mathbf{Case 2}$. $\lambda_{m_1}> 0$, $u_{nn}\geq0$.

  It follows from
  \begin{eqnarray}
    kf=\sum_{i=1}^{n}F^{ii}u_{ii}=\sum_{u_{ii}\geq0}F^{ii}u_{ii}\nonumber
  \end{eqnarray}
  and (\ref{4.32}) that
  \begin{eqnarray}
    F^{ij}P_{ij}\leq \big(\beta C_{1}-(A+\sigma N)k_{3}\big)(\mathcal{F}+1)+k_{1}kf<0,
  \end{eqnarray}
  if we choose $A>\frac{3\beta C_{1}+k_{1}k\max f}{k_{3}}$.

\medskip

  $\mathbf{Case 3}$. $\lambda_{m_1}>0$,  $-\frac{k_{3}}{4k_{1}}u_{11}\leq u_{nn}<0$.

  It follows from
  \begin{eqnarray}
    \sum_{u_{ii}\geq0}F^{ii}u_{ii}+\sum_{u_{ii}<0}F^{ii}u_{ii}=kf\nonumber
  \end{eqnarray}
  that
  \begin{eqnarray}
   -k_{1}\sum_{u_{ii}<0}F^{ii}u_{ii}+ k_{1}\sum_{u_{ii}\geq0}F^{ii}u_{ii}
   &=&
   k_{1}(kf-2\sum_{u_{ii}<0}F^{ii}u_{ii})\nonumber\\
    &\leq&k_{1}kf-2k_{1}u_{nn}\mathcal{F}\nonumber\\
    &\leq&k_{1}kf+(\frac{2A}{3}+\frac{\sigma N}{2})k_{3}\mathcal{F}
  \end{eqnarray}
  Similarly we choose $A>\frac{3(\beta C_{1}+k_{1}k\max f)}{k_{3}}$ to get
  \begin{eqnarray}
    F^{ij}P_{ij}<0.
  \end{eqnarray}

\medskip

  $\mathbf{Case 4}$. $\lambda_{m_1}>0$, $u_{nn}<-\frac{k_{3}}{4k_{1}}u_{11}$, $\lambda_{C_n^m}\leq-\delta_1'u_{11}$, $\delta_1'$
  a small positive constant to be determined later.

  Obviously, we have $\lambda_1\geq\lambda_{m_1}>0$.
   If $u_{11}\geq u_{22}$, then it is easy to see $\lambda_1\geq \lambda_2$. Otherwise, $u_{11}<u_{22}$,
   since $2\leq m\leq \frac{n}{2}$, then we have
  \begin{eqnarray}
    \lambda_1&=&\mu_1+\sum_{i=2}^{m}\mu_i\nonumber\\
    &\geq&\lambda_{m_1}+\mu_2-\mu_n\nonumber\\
    &>&u_{11}\geq\frac{\sigma}{3mC_0}\lambda_2.\label{5.45}
  \end{eqnarray}
 Here we use (\ref{4.35}) in the last inequality. Again we use (\ref{4.35}) to have
  \begin{eqnarray}\label{5.46}
  \lambda_{C_n^m}\leq-\frac{\sigma\delta_1'}{3mC_0}\lambda_1.
  \end{eqnarray}
  Now (\ref{5.45}) and (\ref{5.46}) permit us to choose $\delta=\min\{1,\frac{\sigma}{3mC_0}\}=\frac{\sigma}{3mC_0}$ and $\varepsilon=\frac{\sigma\delta_1'}{3mC_0}$ in Proposition \ref{pro5} to give
  \begin{eqnarray}
   F^{11}\geq S_k(\lambda|m_1)\geq c_0S_k(\lambda)=\frac{c_0}{(C_n^m-k+1)}\mathcal{F}.
  \end{eqnarray}
  where $c_0=\min\{\frac{\sigma^4\delta_1'^2}{162m^4(n-2)(n-1)C_0^4}, \frac{\sigma^3\delta_1'^2}{108m^3(n-1)C_0^3}\}$. Similar to the Case 1 we have
  \begin{eqnarray}
    F^{ij}P_{ij}&\leq&\big(\beta C_{1}-(A+\sigma N)k_{3}\big)(\mathcal{F}+1)+2C_{0}k_{1}N\mathcal{F}\nonumber\\
    &&-\frac{c_0\beta}{2n(C_{n}^{m}-k+1)}(\frac{A}{3}+\frac{2\sigma N}{3})\mathcal{F}\nonumber\\
    &<&0,\nonumber
  \end{eqnarray}
   if we choose $\beta>\frac{6nk_{1}C_{0}(C_{n}^{m}-k+1)}{c_0\sigma}$ and $A>\frac{\beta C_{1}}{k_{3}}$.

\medskip

  $\mathbf{Case 5}$. $\lambda_{m_1}>0$, $u_{nn}<-\frac{k_{3}}{4k_{1}}u_{11}$, $\lambda_{C_{n}^{m}}\geq-\delta_{1}'u_{11}$.

  Note that, by (\ref{4.35}),
  \begin{eqnarray}
    \lambda_1\leq\frac{3mC_0}{\sigma}u_{11}.\nonumber
  \end{eqnarray}
  Let $\delta_1'=\frac{3C_0k_3^{k-1}}{(C_n^m)!8^kk_1^{k-1}}$, now we can choose $\delta=\frac{k_3}{4k_1}$ and $\theta_2=\frac{k_3^{k-1}}{4^km^{k-1}k_1^{k-1}(C_n^m)^3}$ in the Proposition \ref{pro7}, such that
  \begin{eqnarray}
    F^{11}\geq S_{k-1}(\lambda|m_1)\geq \theta_2 \sum_{i=1}^{C_n^m} S_k(\lambda|i)=\frac{\theta_2}{m}\mathcal{F}.
  \end{eqnarray}

  Similarly we choose $\beta>\frac{6nmC_0k_1}{\sigma\theta_2}$ and $A>\frac{\beta C_{1}}{k_{3}}$ to get
\begin{eqnarray}
  F^{ij}P_{ij}<0.
\end{eqnarray}

In conclusion, we choose
\begin{eqnarray}
  \beta=\max\{2nk_2+1,\frac{6nmk_{1}C_{0}C_{n}^{m}}{\sigma},\frac{6nk_{1}C_{0}C_{n}^{m}}{c_0\sigma},
\frac{6nmC_0k_1}{\sigma\theta_2}\}.\nonumber
\end{eqnarray}
 Taking $\mu=\min\{\mu_0,\frac{1}{4K_3},\frac{1}{\beta}\}$ and $A>\max \{3\beta C_{2},\frac{3(\beta C_{1}+k_{1}k\max f)}{k_{3}}\}$,
 we obtain $F^{ii}P_{ij}<0$, which contradicts to
  that $P$ attains its minimum in the interior of $\Omega_{\mu}$.
   This implies that $P$ attains its minimum on the boundary $\partial \Omega_{\mu}$.

On $\partial \Omega$, it is easy to see
\begin{eqnarray}
  P=0.
\end{eqnarray}
On $\partial\Omega_{\mu}\cap\Omega$, we have
\begin{eqnarray}
  P\geq-C_{3}(|u|_{C^{1}}, |\psi|_{C^{0}})+(A+\sigma N)\frac{\mu}{2}\geq0,
\end{eqnarray}
if we take $A=\max \{\frac{2C_{3}}{\mu},3\beta C_{2},\frac{3(\beta C_{1}+k_{1}k\max f)}{k_{3}}\}$. Finally the maximum principle tells us that
\begin{eqnarray}
  P\geq0,\quad\text{in}\quad\Omega_{\mu}.
\end{eqnarray}

Suppose $u_{\nu\nu}(y_{0})=\sup_{\partial\Omega}u_{\nu\nu}>0$, we have
\begin{eqnarray}
  0&\geq&P_{\nu}(y_{0})\nonumber\\
  &\geq&(u_{r\nu}h_{r}+u_{r}h_{r\nu}-\psi_{\nu})-(A+\sigma N)h_{\nu}\nonumber\\
  &\geq&u_{\nu\nu}(y_{0})-C(|u|_{C^{1}}, |\partial\Omega|_{C^{2}}, |\psi|_{C^{2}})-(A+\sigma N).\nonumber
\end{eqnarray}
Then we get
\begin{eqnarray}
  \sup_{\partial\Omega}u_{\nu\nu}\leq C+\sigma N.
\end{eqnarray}
\end{proof}

 In a similar way, we construct the super barrier function as
 \begin{eqnarray}
  \overline{P}(x):=g(x)(Du\cdot Dh(x)-\psi(x))+G(x).
\end{eqnarray}
We have the following lemma.
\begin{lemma}\label{le4.4}
  Fix $\sigma$, if we select $\beta$ large, $\mu$ small, and $A$ large, then
  \begin{eqnarray}
    \overline{P}\leq0,\quad\text{in}\quad\Omega_{\mu}.
  \end{eqnarray}
  Furthermore, we have
  \begin{eqnarray}\label{4.77}
    \inf_{\partial \Omega}u_{\nu\nu}\geq -C-\sigma N,
  \end{eqnarray}
  where constant $C$ depends on $|u|_{C^{1}}$, $|\partial\Omega|_{C^{2}}$ $|f|_{C^2}$ and $|\phi|_{C^{2}}$.
\end{lemma}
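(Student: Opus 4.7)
The plan is to mirror the proof of Lemma \ref{le4.3}, exchanging the roles of interior minimum and interior maximum and exploiting the reversed sign on $G(x)$ in the definition of $\overline{P}$. I will suppose $\overline{P}$ attains its maximum at an interior point $x_0\in\Omega_\mu$ and derive a contradiction. After rotating coordinates so that $(u_{ij})$, $W$ and $(F^{ij})$ are simultaneously diagonal at $x_0$, the computation of $F^{ij}\overline{P}_{ij}$ is identical to that of $F^{ij}P_{ij}$ in (\ref{5.23}) except that the last term picks up the opposite sign, yielding
\[
F^{ij}\overline{P}_{ij} \geq -\beta C_1(\mathcal{F}+1) + (A+\sigma N)F^{ii}h_{ii} - 2\beta F^{ii}u_{ii}h_i^2 + 2g F^{ii}u_{ii}h_{ii}.
\]
By Lemma \ref{le5.2}, $(A+\sigma N)F^{ii}h_{ii}\geq (A+\sigma N)k_3(1+\mathcal{F})$, so the second term dominates the first once $A$ is large; the remaining task is to control the last two.

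The first-order condition $\overline{P}_i(x_0)=0$ yields, for any $i$ in the good set $G=\{i:|\beta h_i^2|\geq k_1/4\}$, the identity
\[
u_{ii} = \frac{\beta(u_r h_r-\psi)}{g} - \frac{u_r h_{ri}-\psi_i}{h_i} - \frac{A+\sigma N}{g},
\]
i.e.\ exactly the sign-flipped counterpart of the corresponding formula in Lemma \ref{le4.3}. Consequently $u_{ii}\leq -\tfrac{A}{3}-\tfrac{2\sigma N}{3}$ for $i\in G$ (once $A\geq 3\beta C_2$), so the term $-2\beta F^{ii}u_{ii}h_i^2$ becomes a large positive quantity that aids rather than obstructs the argument. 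Splitting $I$ into $G$ and $B=I\setminus G$ and running the same five-case analysis on $\lambda_{m_1}$, $u_{nn}$, and $\lambda_{C_n^m}$ via Propositions \ref{pro5} and \ref{pro7} then verifies $F^{ij}\overline{P}_{ij}(x_0)>0$ for $\beta$ large, $\mu$ small, and $A$ sufficiently large, contradicting the maximum principle.

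For the boundary, $\overline{P}=0$ on $\partial\Omega$ (since $Du\cdot Dh-\psi = u_\nu-\phi=0$ and $h=0$), while on $\partial\Omega_\mu\cap\Omega$ one has $h\leq -\mu/2$, so $G\leq -(A+\sigma N)\mu/2$ dominates the $C^0$-bounded tangential contribution and forces $\overline{P}\leq 0$ there. Combining with the interior contradiction, the maximum principle gives $\overline{P}\leq 0$ throughout $\Omega_\mu$. Letting $y_0\in\partial\Omega$ realize $\inf_{\partial\Omega}u_{\nu\nu}$, the Hopf lemma applied at $y_0$ yields $\overline{P}_\nu(y_0)\geq 0$; expanding this via $Dh|_{\partial\Omega}=\nu$ and $h_\nu|_{\partial\Omega}=1$ produces
\[
0 \leq g(y_0)u_{\nu\nu}(y_0) + C + (A+\sigma N),
\]
so that in the nontrivial case $u_{\nu\nu}(y_0)<0$ the inequality $g\geq 1$ gives $u_{\nu\nu}(y_0)\geq g(y_0)u_{\nu\nu}(y_0)\geq -C-\sigma N$ after absorbing the fixed constant $A$ into $C$, which is (\ref{4.77}). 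The main obstacle is the delicate case where $\lambda_{C_n^m}$ is very negative, in which strict $(m,k_0)$-convexity of $\Omega$ is essential to invoke Lemma \ref{le5.2} and secure the lower bound on $F^{ii}h_{ii}$; everything else in the argument is a structural reflection of Lemma \ref{le4.3}.
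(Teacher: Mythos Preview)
Your overall architecture is correct: assume an interior maximum of $\overline P$, compute $F^{ij}\overline P_{ij}$, use $\overline P_i(x_0)=0$ to pin down $u_{ii}$ for $i\in G$, check the two boundary pieces, and finish with the Hopf lemma. The boundary and Hopf parts match the paper exactly.

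Where you diverge from the paper is the interior step. You propose to ``run the same five-case analysis on $\lambda_{m_1}$, $u_{nn}$, and $\lambda_{C_n^m}$ via Propositions~\ref{pro5} and~\ref{pro7}.'' The paper does \emph{not} do this. It observes that, because $u_{11}<0$ here (the sign flip you correctly identified), one gets $F^{11}\geq c\,\mathcal F$ in one stroke: since $\lambda(W)\in\Gamma_k$ we have $\lambda_k>0$, hence the largest summand $\mu_{i_1}$ in $\lambda_k=\sum_{l}\mu_{i_l}$ is positive, so $\mu_{i_1}>0>\mu_1$ and therefore
\[
\lambda_k\;\geq\;\mu_{i_1}+\sum_{i=n-m+2}^{n}\mu_i\;>\;\mu_1+\sum_{i=n-m+2}^{n}\mu_i\;=\;\lambda_{m_1}.
\]
Then by (\ref{2.1}) and (\ref{2.2}), $F^{11}\geq S_{k-1}(\lambda|m_1)\geq S_{k-1}(\lambda|k)\geq C(n,m,k)\,\mathcal F$. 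Plugging this into (\ref{4.91}) and using $u_{11}\leq -\tfrac{2\sigma N}{3}$ makes $-\tfrac{\beta}{2n}F^{11}u_{11}$ dominate the $O(N\mathcal F)$ error terms once $\beta$ is large, yielding $F^{ij}\overline P_{ij}>0$ without any case split and without Propositions~\ref{pro5} or~\ref{pro7}.

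Two further remarks. First, the five cases in Lemma~\ref{le4.3} are tailored to $u_{11}>0$ (e.g.\ the thresholds $u_{nn}<-\tfrac{k_3}{4k_1}u_{11}$ and $\lambda_{C_n^m}\leq -\delta_1' u_{11}$ presuppose it, and Cases~2--3 do not bound $F^{11}$ at all but instead control the bad sums directly using $u_{ii}\geq 0$), so ``the same'' analysis would not transfer without reworking. Second, your closing sentence misplaces the role of $(m,k_0)$-convexity: it enters only through Lemma~\ref{le5.2} to guarantee $F^{ij}h_{ij}\geq k_3(1+\mathcal F)$, and is used identically in both the sub- and super-barrier arguments; there is no ``delicate $\lambda_{C_n^m}$ very negative'' obstacle in the super-barrier proof.
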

\begin{proof}
   We assume $\overline{P}(x)$ attains its maximum point $x_{0}$ in the interior of $\Omega_{\mu}$. Differentiate $\overline{P}$ twice to  obtain
  \begin{eqnarray}
    \overline{P}_{i}=g_{i}(u_{r}h_{r}-\psi)+g(u_{ri}h_{r}+u_{r}h_{ri}-\psi_{i})+G_{i},
  \end{eqnarray}
  and
  \begin{eqnarray}
   \overline{P}_{ij}&=&g_{ij}(u_{r}h_{r}-\psi)+g_{i}(u_{rj}h_{r}+u_{r}h_{rj}-\psi_{j})\\
    &&+g_{j}(u_{ri}h_{r}+u_{r}h_{ri}-\psi_{i})+g(h_{rij}h_{r}+u_{ri}h_{rj}\nonumber\\
    &&+u_{rj}h_{ri}+u_{r}h_{rij}-\psi_{ij})+G_{ij}.\nonumber
  \end{eqnarray}

  As before we assume that $(u_{ij})$ is diagonal at $x_{0}$, so are $W$ and $(F_{ij})$.
   We choose $\mu=\min\{\frac{1}{4K_1},\frac{1}{\beta}\}$ so that $|\beta h|\leq \beta\frac{\mu}{2}\leq\frac{1}{2}$.
  By a straight computation we obtain
  \begin{eqnarray}\label{4.81}
    F^{ij}\overline{P}_{ij}&=&F^{ii}g_{ii}(u_{r}h_{r}-\psi)+2F^{ii}g_{i}(u_{ii}h_{i}+u_{r}h_{ri}-\psi_{i})\nonumber\\
     &&+g F^{ii}(u_{rii}h_{r}+2u_{ii}h_{ii}+u_{r}h_{rii}-\psi_{ii})+(A+\sigma N)F^{ii}h_{ii} \nonumber\\
    &\geq& -\big(\beta C_1-(A+\sigma N)k_{3}\big)(\mathcal{F}+1)\\
   && -2\beta F^{ii}u_{ii}h_{i}^{2}+2gF^{ii}u_{ii}h_{ii},\nonumber
  \end{eqnarray}
where$C_1=C_{1}(|u|_{C^{1}}, |\partial \Omega|_{C^{31}}, |\phi|_{C^{2}}, |f|_{C^{1}}, n)$.

  We divide indexes $I=\{1, 2, \cdots, n\}$ into two sets in the following way,
  \begin{eqnarray}
    B=\{i\in I | |\beta h_{i}^{2}|<\frac{k_1}{2}\},\nonumber\\
    G =I\backslash B =\{i\in I | |\beta h_{i}^{2}|\geq\frac{k_1}{2}\},\nonumber
  \end{eqnarray}
  where $k_1$ is a positive number depends on $|\partial\Omega|_{C^2}$ and $K_3$ such that $|D^2h|_{C^0}\leq\frac{k_1}{2}$.

  For $i\in G$, by $\overline{P}_{i}(x_{0})=0$, we get
  \begin{eqnarray}
    u_{ii}=-\frac{A+\sigma N}{g}+\frac{\beta(u_{r}h_{r}-\psi)}{g}-\frac{u_{r}h_{ri}-\psi_{i}}{h_{i}}.
  \end{eqnarray}
  Because $|h_{i}^{2}|\geq\frac{k_1}{2\beta}$ , we have
  \begin{eqnarray}
    |\frac{\beta(u_{r}h_{r}-\psi)}{g}-\frac{u_{r}h_{ri}-\psi_{i}}{h_{i}}|\leq\beta C_{2}(k_1,|u|_{C^{1}},
    |\partial \Omega|_{C^{2}}, |\psi|_{C^{1}}).\nonumber
  \end{eqnarray}
  Then let $A\geq3\beta C_{2}$, we have
  \begin{eqnarray}
 -\frac{4A}{3}-\sigma N\leq u_{ii}\leq -\frac{A}{3}-\frac{2\sigma N}{3},\quad \forall i\in G
  \end{eqnarray}
  We choose $\beta\geq2nk_1+1$ to let $|h_{i}^{2}|\leq\frac{1}{4n}$ for $i\in B$. Because $\frac{1}{2}\leq|Dh|\leq 2$, there is a $i_{0}\in G$, say $i_{0}=1$, such that
  \begin{eqnarray}
    h_{1}^{2}\geq \frac{1}{4n}\nonumber
  \end{eqnarray}

  It follows that
  \begin{eqnarray}\label{4.89}
    -2\beta \sum_{i\in I}F^{ii}u_{ii}h_{i}^{2}&=&-2\beta\sum_{i\in G}F^{ii}u_{ii}h_{i}^{2}-2\beta \sum_{i\in B}F^{ii}u_{ii}h_{i}^{2}\nonumber\\
    &\geq& -2\beta F^{11}u_{11}h_{1}^{2}-2\beta\sum_{u_{ii}\geq0}F^{ii}u_{ii}h_{i}^{2}\nonumber\\
    &\geq&-\frac{\beta F^{11}u_{11}}{2n}-k_1\sum_{u_{ii}\geq0}F^{ii}u_{ii}.
  \end{eqnarray}
  and
  \begin{eqnarray}\label{4.90}
    2g\sum_{i\in I}F^{ii}u_{ii}h_{ii}&=&2g\sum_{u_{ii}\geq 0}F^{ii}u_{ii}h_{ii}+2g\sum_{u_{ii}<0}F^{ii}u_{ii}h_{ii}\nonumber\\
    &\geq&-k_1\sum_{u_{ii}\geq0}F^{ii}u_{ii}+2k_{1}\sum_{u_{ii}<0}F^{ii}u_{ii}.
  \end{eqnarray}
  Plug (\ref{4.89}) and (\ref{4.90}) into (\ref{4.81}) to get
  \begin{eqnarray}\label{4.91}
    F^{ii}\overline{P}_{ij}&\geq&
    \big((A+\sigma N)k_{3}-\beta C_{1}\big)(\mathcal{F}+1)-\frac{\beta}{2n}F^{11}u_{11}\nonumber\\
    &&-2k_{1}\sum_{u_{ii}\geq0}F^{ii}u_{ii}+2k_{1}\sum_{u_{ii}<0}F^{ii}u_{ii}.
  \end{eqnarray}

Denote $u_{22}\geq\cdots\geq u_{nn}$, and $\mu_i=u_{ii}\ (1\leq i\leq n)$ for simplicity. We also denote
  $\lambda_1\geq\lambda_{2}\geq\cdots\geq\lambda_{C_{n}^{m}}$ the eigenvalues of  the matrix $W$
  , and
  \begin{eqnarray}
   \lambda_{k}&=&\sum_{l=1}^{m}\mu_{i_l},\nonumber\\
   &\geq&\mu_{i_1}+\sum_{i=n-m+2}^{n}\mu_i,
   \quad\text{for}\ \mu_{i_1}\geq\cdots\geq\mu_{i_m},\nonumber\\
    \lambda_{m_1}&=&\min\limits_{1\in\overline{\alpha}}\{w_{\overline{\alpha}\overline{\alpha}}\}=
     \mu_1+\sum_{i=n-m+2}^{n}\mu_i.\nonumber
  \end{eqnarray}
   As before, assume $N\geq1$, from (\ref{5.1}) we have
  \begin{eqnarray}\label{5.28}
    |u_{ii}|\leq 2C_{0}N,\quad \forall i\in I.
  \end{eqnarray}

 Because $u_{11}<0$, and from (\ref{2.2}) in Proposition \ref{pro3}, we have $\lambda_k>0$,
 then $\mu_{i_1}>0$. It follows that $\lambda_k\geq\lambda_{m_1}$. Using (\ref{2.1})
 and (\ref{2.2}) again,
 we obtain
 \begin{eqnarray}
   F^{11}>S_{k-1}(\lambda|m_1)\geq S_{k-1}(\lambda|k)\geq C(n,m,k) \mathcal{F}.
 \end{eqnarray}
Similarly we choose  $\beta=\frac{6nk_{1}C_{0}}{\sigma C(n,m,k)}+2nk_1+1$
and $A>\frac{\beta C_{1}}{k_{3}}$
 to get
\begin{eqnarray}
  F^{ij}\overline{P}_{ij}>0.
\end{eqnarray}
 This contradicts to that $P$ attains its maximum in the interior of $\Omega_{\mu}$.
 This contradiction implies that $P$
 attains its maximum on the boundary $\partial \Omega_{\mu}$.


On $\partial \Omega$, it is easy to see
\begin{eqnarray}
  \overline{P}=0.\nonumber
\end{eqnarray}
On $\partial\Omega_{\mu}\cap\Omega$, we have
\begin{eqnarray}
  \overline{P}\leq C_{3}(|u|_{C^{1}}, |\psi|_{C^{0}})-(A+\sigma N)\frac{\mu}{2}\leq0,\nonumber
\end{eqnarray}
if we take $A=\frac{2C_{3}}{\mu}+\frac{\beta C_{1}}{k_{3}}+1$.
 Finally the maximum principle tells us that
\begin{eqnarray}
  \overline{P}\leq0,\quad\text{in}\quad\Omega_{\mu}.
\end{eqnarray}

Suppose $u_{\nu\nu}(y_{0})=\inf_{\partial\Omega}u_{\nu\nu}$, we have
\begin{eqnarray}
  0&\leq&P_{\nu}(y_{0})\nonumber\\
  &\leq&(u_{r\nu}h_{r}+u_{r}h_{r\nu}-\psi_{\nu})+(A+\sigma N)h_{\nu}\nonumber\\
  &\leq&u_{\nu\nu}(y_{0})+C(|u|_{C^{1}}, |\partial\Omega|_{C^{2}}, |\psi|_{C^{2}})+(A+\sigma N).
\end{eqnarray}
Then we get
\begin{eqnarray}
  \inf_{\partial\Omega}u_{\nu\nu}\geq -C-\sigma N.
\end{eqnarray}
\end{proof}

Then we prove Theorem \ref{th5.2} immediately.
\begin{proof}[\bf Proof of Theorem \ref{th5.2}]
 We choose $\sigma=\frac{1}{2}$ in Lemma \ref{le4.3} and \ref{le4.4}, then
\begin{eqnarray}
  \sup_{\partial\Omega}|u_{\nu\nu}|\leq C.\label{5.10}
\end{eqnarray}
Combining (\ref{5.10}) with (\ref{5.1}) in Lemma \ref{th4.4}, we obtain
\begin{eqnarray}
  \sup_{\overline{\Omega}}|D^2u|\leq C.
\end{eqnarray}
\end{proof}


\section{Existence of the Neumann boundary problem}\label{sec6}
We use the  method of continuity to prove the existence theorem for the Neumann problem (\ref{eq1}) and(\ref{eq2}).
\begin{proof}[\textbf{Proof of Theorem \ref{th1.1} and \ref{th0}}]
Consider a family of equations with parameter $t$,
\begin{equation}
\left\{
\begin{aligned}\label{eq6}
 &S_{k}(W)=tf+(1-t)\frac{(C_n^m)!m^k}{(C_n^m-k)!k!},\quad \text{in}\ \Omega,\\
 &u_{\nu}=-au+tb+(1-t)(x\cdot\nu+\frac{a}{2}x^2),\quad \text{on}\ \partial\Omega.
\end{aligned}
\right.
\end{equation}
From Theorem \ref{th1}, \ref{th3}, \ref{th4}, \ref{th5.1} and \ref{th5.2}, we get a glabal $C^2$ estimate independent of $t$
for the equation (\ref{eq6}) in both cases of Theorem \ref{th1.1} and Theorem \ref{th0}.
 It follows that the equation (\ref{eq6}) is uniformly elliptic.
 Due to the concavity of $S_k^{\frac{1}{k}}(W)$ with respect to $D^2u$ (see \cite{cns2}),
 we can get the global H\"older estimates of second derivatives following the arguments
in \cite{lt}, that is, we can get
\begin{eqnarray}
  |u|_{C^{2,\alpha}}\leq C,
\end{eqnarray}
where $C$ depends only on $n$, $m$, $k$, $|u|_{C^{1}}$,$|f|_{C^{2}}$,$\min f$, $|\phi|_{C^{3}}$ and $\Omega$.
 It is easy to see that $\frac{1}{2}x^2$ is a $k$-admissible solution to (\ref{eq6}) for $t=0$.
 Applying the method of continuity (see \cite{gt}, Theorem 17.28), the existence of the classical
solution holds for $t=1$. By the standard regularity theory of uniformly elliptic partial differential
equations, we can obtain the higher regularity.
\end{proof}


\begin{thebibliography}{99}

\bibitem{c}C.Q. Chen. \emph{The interior gradient estimate of Hessian quotient equations}. J. Differential Equations,
259(2015), 1014-1023.

\bibitem{cmw} C.Q. Chen, X.N. Ma, W. Wei. \emph{The Neumann problem of special
Lagrangian equations with supercritical phase}. preprint.

\bibitem{cns}L. Caffarelli, L. Nirenberg, J. Spruck. \emph{Dirichlet problem for nonlinear second order elliptic equations I,
Monge-Amp\`ere equations}. Comm. Pure Appl. Math., 37(1984), 369-402.

\bibitem{cns2}L. Caffarelli, L. Nirenberg, J. Spruck. \emph{Dirichlet problem for nonlinear second order elliptic equations III,
Functions of the eigenvalues of the Hessian}. Acta Math., 155(1985), 261-301.

\bibitem{cw}K.S. Chou, X.J. Wang. \emph{A variation theory of the Hessian equation}. Comm. Pure Appl. Math.
54(2001), 1029-1064.

\bibitem{cz}C.Q. Chen, D.K. Zhang. \emph{The Neumann Problem of Hessian Quotient Equations}.
preprint.

\bibitem{deng} B. Deng. \emph{The Monge-Amp\`ere equation for strictly
 $(n-1)$-convex functions with Neumann condition}. preprint.

\bibitem{guanbo} B. Guan.
\emph{Second-order estimates and regularity for fully nonlinear elliptic equations on Riemannian manifolds}[J].
Duke Mathematical Journal, 2014, 163(8): 1491-1524.

\bibitem{gt} D. Gilbarg, N. \emph{Trudinger. Elliptic Partial Differential Equations of Second Order}.
Grundlehren der
Mathematischen Wissenschaften, Vol. 224. Springer-Verlag, Berlin-New York, 1977. x+401 pp. ISBN:
3-540-08007-4.


\bibitem{hl1} Harvey F R, Lawson Jr H B.
\emph{Foundations of p-convexity and p-plurisubharmonicity in Riemannian geometry}[J].
 arXiv preprint ArXiv:1111.3895, 2011.

\bibitem{hl2} Harvey F R, Lawson Jr H B. \emph{Geometric plurisubharmonicity and convexity: an introduction}[J].
Advances in Mathematics, 2012, 230(4-6): 2428-2456.

\bibitem{hl3} Harvey F R, Lawson Jr H B. \emph{Dirichlet duality and the nonlinear Dirichlet problem. Comm}.
Pure
Appl. Math. 62(3), 396-443 (2009)

\bibitem{hmw}Z.L. Hou, X.N. Ma, D.M. Wu.
\emph{A second order estimate for complex Hessian equations on a
compact K¡§ahler manifold}. Math. Res. Lett. 17 (2010), no. 3, 547-561.

\bibitem{hmw1}F. Han, X.N. Ma, D.M. Wu.
\emph{The existence of k-convex hypersurface with prescribed mean curvature}[J].
Calculus of Variations and Partial Differential Equations, 2011, 42(1-2): 43-72.

\bibitem{hs}Gerhard Huisken, Carlo Sinestrari.
\emph{Convexity estimates for mean curvature flow and singularities of
mean convex surfaces}. Acta Math. 183 (1999), no. 1, 45-70.

\bibitem{ivo}N. Ivochkina.
\emph{Solutions of the Dirichlet problem for certain equations of Monge-Amp\`ere type }(in
Russian). Mat. Sb., 128 (1985), 403-415: English translation in Math. USSR Sb.,56(1987).

\bibitem{l}G. Lieberman. \emph{Oblique boundary value problems for elliptic equations}.
 World Scientific Publishing,
2013.

\bibitem{l2}G. Lieberman. \emph{Second order parabolic differential equations}. World Scientific, 1996.


\bibitem{lt} G. Lieberman, N. Trudinger.
\emph{Nonlinear oblique boundary value problems for nonlinear elliptic equations}.
Trans. Amer. Math. Soc., 295 (1986), no. 2, 509-546.

\bibitem{ltu} P.L. Lions, N. Trudinger, J. Urbas.
\emph{The Neumann problem for equations of Monge-Amp\`ere type}.
Comm. Pure Appl. Math., 39 (1986), 539-563.

\bibitem{LiT} M. Lin, N. Trudinger. \emph{On some inequalities for elementary symmetric 
functions}.
Bull. Austral. Math. Soc. 50(1994), 317-326.

\bibitem{mq}X.N. Ma, G.H. Qiu. \emph{The Neumann Problem for Hessian Equations}.
Commun. Math. Phys. 366, 1¨C28(2019). https://doi.org/10.1007/s00220-019-03339-1

\bibitem{mx}X.N. Ma, J.J Xu.
\emph{Gradient estimates of mean curvature equations with Neumann boundary value problems}[J].
Advances in Mathematics, 2016, 290: 1010-1039.

\bibitem{qx} G.H. Qiu, C. Xia. \emph{Classical Neumann Problems for Hessian Equations and Alexandrov-Fenchel's Inequalities}.
 International Mathematics Research Notices, rnx296, https://doi.org/10.1093/imrn/rnx296

\bibitem{s}J. Spruck.
\emph{Geometric aspects of the theory of fully nonlinear elliptic equations}. Clay Mathematics
Proceedings, volume 2, 2005, 283-309.

\bibitem{ss} L. Simon, J. Spruck.
\emph{Existence and regularity of a capillary surface with prescribed contact angle}.
Arch. Rational Mech. Anal., 61(1976), 19-34.

\bibitem{sh} Sha J P. \emph{$p$-convex Riemannian manifolds}[J]. Inventiones mathematicae, 1986, 83(3): 437-447.

\bibitem{stw} W. Sheng, N. Trudinger,X. Wang.
 \emph{Prescribed Weingarten Curvature Equations}.
 Recent development in geometry and Analysis, ALM 2012, 23: 359-386 Higher Education Press and International Press Beijing-Boston.

\bibitem{tru}N. Trudinger. \emph{On the Dirichlet problem for Hessian equations}. Acta Math., 175(1995), 151-164.

\bibitem{tru2}N. Trudinger. \emph{On degenerate fully nonlinear elliptic equations in balls}. Bulletin of the Australian
Math. Soc., 35 (1987), 299-307.

\bibitem{tw} Tosatti V, Weinkove B. \emph{The Monge-Amp\`ere equation for
 $(n-1)$-plurisubharmonic functions on a compact K\"ahler manifold}[J]. Journal of the American Mathematical Society, 2017, 30(2): 311-346.

 \bibitem{tw2} Tosatti V, Weinkove B.
 \emph{Hermitian metrics,$(n-1, n-1)$-forms and Monge¨CAmp\`e re equations}[J].
 Journal f¨¹r die reine und angewandte Mathematik (Crelles Journal), 2013.

\bibitem{w} J. Wang.
\emph{The Neumann problem of special Lagrangian equations with critical phase}. to
appear.

\bibitem{wu} H. Wu. \emph{Manifolds of partially positive curvature}[J].
Indiana University Mathematics Journal, 1987, 36(3): 525-548.


\end{thebibliography}
\end{document}